\newcommand\blfootnote[1]{%
  \begingroup
  \renewcommand\thefootnote{}\footnote{#1}%
  \addtocounter{footnote}{-1}%
  \endgroup
}
\newcommand{\rnk}{\mathrm{rank}}
\newcommand{\op}{\mathrm{op}}
\newcommand{\cov}{\mathrm{Cov}}
\newcommand{\inv}{\mathrm{inv}}
\newcommand{\size}{\mathrm{size}}
\newtheorem{assumption}{Assumption}
\title{Sparsity in Partially Controllable Linear Systems}
\begin{document}

\author{
Yonathan Efroni \quad Sham Kakade \quad Akshay Krishnamurthy \quad Cyril Zhang}

\affil{Microsoft Research\\New York, NY}
\maketitle
\blfootnote{yefroni@microsoft.com, sham.kakade@microsoft.com, akshay@cs.umass.edu, cyrilzhang@microsoft.com}
\vspace{-1cm}

\maketitle

\begin{abstract}
A fundamental concept in control theory is that of controllability, where any system state can be reached through an appropriate choice of control inputs. Indeed, a large body of classical and modern approaches are designed for controllable linear dynamical systems. However, in practice, we often encounter systems in which a large set of state variables evolve exogenously and independently of the control inputs; such systems are only \emph{partially controllable}. The focus of this work is on a large class of partially controllable linear dynamical systems, specified by an underlying sparsity pattern. Our main results establish structural conditions and finite-sample guarantees for learning to control such systems. In particular, our structural results characterize those state variables which are irrelevant for optimal control, an analysis which departs from classical control techniques. Our algorithmic results adapt techniques from high-dimensional statistics---specifically soft-thresholding and semiparametric least-squares---to exploit the underlying sparsity pattern in order to obtain finite-sample guarantees that significantly improve over those based on certainty-equivalence. We also corroborate these theoretical improvements over certainty-equivalent control
through a simulation study.
\end{abstract}

\section{Introduction}
A recurring theme in modern sequential decision making and control
applications is the presence of high-dimensional signals containing
much irrelevant information. Operating on raw signals provides
flexibility to learn much higher-quality policies than what may be
expressed using hand-engineered inputs or features, but it poses new
challenges for reinforcement learning (RL) and control. In the context of
controls, high-dimensionality inevitably leads to many state variables
that do not affect and cannot be affected by the controller inputs. Hence, these state variables are irrelevant for optimal control. 
In this work, we consider the question of how to efficiently learn to control partially controllable systems, while ignoring these irrelevant variables.


\begin{example}[Turbine Orientation~\citep{stanfel2020distributed}]\label{ex: stanfel}
Consider the problem of learning to orient
turbines in a wind farm in response to sensor measurements of wind
speed and direction. To learn a high-quality controller that can
anticipate local wind patterns, it is desirable to collect
measurements from a broad region. However geographical features such
as mountains and valleys may render some of these measurements
irrelevant for the control task, although this may not be known to the
system designer in advance. As such, we would like our controller to
efficiently learn to ignore these irrelevant sensors while relying on
the relevant ones for decision making.
\end{example}

Systems like this contain two challenging elements for learning to
control. First, a large part of the system state --- namely the wind
speed and direction at all locations --- is completely
\emph{uncontrollable}, as the wind turbines negligibly affect weather
patterns. Rather, the controller must react to these state variables
even though they cannot be controlled. Second, some of the
uncontrollable variables may be completely \emph{irrelevant}, meaning
they have no bearing on the optimal control decisions. To complicate
matters, which variables are controllable, uncontrollable, and
irrelevant must be learned, ideally in a sample-efficient manner.

In the broader literature, there are two well-studied approaches for addressing high dimensionality. One approach is through feature engineering or the use of kernel machines, while the other exploits sparsity to recover certain low-dimensional structural information. Both approaches have been utilized in the context of decision making, the former via dimension-free linear control~\citep{perdomo2021towards} and the Kernelized Nonlinear Regulator~\citep{deisenroth2011pilco,mania2020active,kakade2020information}, and the latter both in RL~\citep{agarwal2020flambe,hao2021online} and some works on continuous control~\citep{fattahi2018sample,wang2020episodic,sun2020finite}. This work contributes to the latter line of work on structure recovery in continuous control.

Our focus is on establishing non-asymptotic guarantees for learning to
control in high-dimensional partially controllable systems like the wind farm example described above. We focus our attention on the problem of learning the linear quadratic regulator (LQR) in which the majority of the state variables are irrelevant. 

\paragraph{Technical Overview.}
Deferring further details and technical motivation to subsequent
sections, we present a brief overview of the setup and
results. Consider a dynamical system of the form $x_{t+1} =
A x_t + B u_t + \xi_t$
where $x_t \in \RR^d$ is
the system state, $u_t\in \RR^{d_u}$ is the controller input, and
$\xi_t$ is a (stochastic) disturbance. The system is said to be
\emph{controllable} if, in expectation, any system state can be reached through an
 appropriate choice of a deterministic control sequence (Formally, this
 condition is equivalent to the controllability matrix being full rank. See \pref{sec: LQ with non essential noise}).
When such a condition does not hold, we call the system \emph{partially controllable}. 
For such systems, it is well known that there exists
an invertible transformation of the state variables, such that the
system can be rewritten  with dynamics of the form \citep{klamka1963controllability,sontag2013mathematical}: 
\begin{align}
A = \sbr{\begin{matrix}
A_1 & A^{\mathrm{PC}}_{12} \\
0 & A^{\mathrm{PC}}_2\\
\end{matrix}}, \qquad B = \sbr{\begin{matrix} B_1\\ 0 \end{matrix}}. \label{eq:uncontrollable_system}
\end{align}
Here the first block of coordinates corresponds to the controllable
subsystem. On the other hand, the second block of
\emph{uncontrollable} coordinates cannot be affected by the control
inputs (due to that $B_2=0$, although it can affect the controllable
subsystem (if $A^{\mathrm{PC}}_{12}\neq0$)~\citep{klamka1963controllability, zhou1996robust,
  sontag2013mathematical}.  

In this work, to capture the presence of \emph{irrelevant} state variables
that do not affect the controllable subsystem, we consider a dynamical
system that is more structured
than~\eqref{eq:uncontrollable_system}. In our setting, which we call the
partially controllable linear-quadratic (PC-LQ) control problem, the system admits the block structure: 
\begin{align}
A = \sbr{\begin{matrix}
A_1 & A_{12} & 0\\
0 & A_2 & 0\\
0 & A_{32} & A_3
\end{matrix}}, \qquad B = \sbr{\begin{matrix} B_1\\ 0 \\0 \end{matrix}}. \label{eq:pc_lqr}
\end{align}
To capture the irrelevance of state variables, our main learnability results will assume that
the underlying dynamics of the system are determined by an  $(A,B)$ in this
form, up to a permutation of the coordinates (see below for more discussion about this assumption). As we
shall see, the first two blocks make up the relevant part of the
system, while the third block of coordinates are irrelevant (in the sense
that if we condition on knowing the values of the coordinates in
blocks $1$ and $2$, then the
state variables in block $3$ provide no further information with
regards to predicting the controllable coordinates in block $1$,
which, as we shall see, is what is required for optimal control).
We are particularly interested in the high-dimensional regime where $A_1\in \mathbb{R}^{s_c\times s_c},A_2\in \mathbb{R}^{s_e\times s_e}$ and $s_c+s_e :=
s \ll d$.


\begin{table*}
\begin{center}
\begin{tabular}{|c| c | c |  }\hline
 Covariance Matrix & Estimation Algorithm &  Sample Complexity \\ \hline \hline
   Positive Definite &  Least-Squares & $\tilde{O}\rbr{\frac{poly(d,d_u)}{\epsilon}}$  \\ 
  \hline
\rowcolor{lightgray}  Diagonal &  Second-Moment Product &  $\tilde{O}\rbr{\frac{s^2+d_u s}{\epsilon}}$ \\  
 \hline
\rowcolor{lightgray}  Positive Definite  &  Semiparametric Least-Squares &  $\tilde{O}\rbr{\frac{s^2+d_u s}{\epsilon} + \frac{\sqrt{\rbr{s^2+d_u s}}d}{\epsilon^{0.5}}}$  \\
  \hline
\end{tabular}
\end{center}
\caption{
Sample complexity results for learning a near-optimal controller in the PC-LQ setting.
Our results, highlighted in gray, compare favorably with the classical least-squares/certainty-equivalent control when the relevant subsystem has dimensionality $s \ll d$. We assume the third, irrelevant, block of~\eqref{eq:pc_lqr} is stable in $L_\infty$ norm (\pref{assum: L1 stability on $A_3$}). In $\tilde{O}(\cdot)$ we only keep polynomial dependence in $\epsilon,d,s$, and $d_u$. See \pref{app: summary} for a thorough summary.  \vspace{-0.25cm}}  
\label{tab:tabbounds}
\end{table*}

\paragraph{Our Contributions.}
Our first theorem is a structural result characterizing which
state variables are irrelevant for optimal control. The result
pertains to all problems equivalent to PC-LQ control, and is proven via an
invariance argument.
When specialized to PC-LQ control, the theorem verifies that the third
block of state variables can be ignored by the optimal controller
(while it is clear that the optimal value function \emph{depends} on block
three). This structural result and our assumption that the relevant subsystem
(blocks one and two) comprises few state variables, shows that the
optimal policy is ``sparse'': it is determined by $\textrm{poly}(s)$ parameters, although neither the system dynamics $A$ nor the optimal value function are sparse matrices.

Relying on the characterization of the relevant state variables for optimal control we turn to the main contribution of our work. We derive two algorithms that
incorporate ideas from high-dimensional statistics to efficiently
estimate only the relevant parts of the system dynamics. In
\pref{tab:tabbounds}, we summarize the main results of the paper
and compare with guarantees for certainty-equivalent control.  We
study two settings that differ only in their assumptions on the
distribution of the starting state $x_0$. In the first setting
(labeled ``diagonal'' in \pref{tab:tabbounds}), we assume that
$x_0$ is sampled such that $\EE[x_0] = 0$ and $\EE[x_0x_0^\top]$ is a
diagonal matrix. In this case, we show that our algorithm learns a
near-optimal control with a \emph{nearly-dimension-free} rate: the
sample complexity scales polynomially with the sparsity $s$ and action
dimension $d_u$, but only logarithmically with the ambient dimension
$d$.

The second setting generalizes the diagonal case to only require that
$x_0$ has strictly positive definite (PD) covariance. Here our algorithm
incurs a lower order polynomial dependence on the ambient dimension~$d$. In particular, for $d^2 \leq (s^2+d_u s)/\epsilon$ this lower
order term is dominated by the leading term, which yields the same
sample complexity as in the diagonal case. In both settings, our bounds
compare quite favorably to certainty-equivalent control, which incurs
a $\textrm{poly}(d)/\epsilon$ leading order dependence. For the second setting, our algorithmic approach relies on a reduction to a semi-parametric least squares estimation~\citep{chernozhukov2016locally,chernozhukov2018double,foster2019orthogonal}. We provide a new result (see \pref{prop: semiparameteric least model sample complexity}), which might be of independent interest, for the semi-parametric least squares estimation algorithm for the linear case. 


\section{Preliminaries and Notation}

\paragraph{Linear-Quadratic Control.} 
A linear-quadratic (LQ) control problem  is specified by a tuple of matrices $L =
(A,B,Q,R)$. The state $x\in \mathbb{R}^d$ evolves according to $x_{t+1} = A x_t + B
u_t+\xi_t$ where $u\in \mathbb{R}^{d_u}$ is the input to the system and $\xi_t$ is i.i.d. noise. The cost is
given by $J(x_1,\cbr{u_t}_{t\geq 1}) = \EE\sbr{\sum_{t\geq1} x_{t}^{\top} Q x_t
+u_t R u_t}$, with $Q \succcurlyeq 0, R \succ 0$; the task is to find the policy that minimizes
$J\rbr{\cbr{u_t}_{t\geq 1}}$. It is well-known that the
optimal controller, the linear quadratic regulator (LQR), of such a system is linear in the state vector,
$u_t = K_\star x_t$, and the optimal value is $J_\star(x_1) = x_1^{\top}
P_\star x_1$, where $P_\star$ is the solution of the Riccati equation
and $K_\star = (R + B^TP_\star B)^{-1}B^{\top} P_\star A$. 
In this work, we
assume that $R=I_{d_u}$, and write $L = (A,B,Q)$ for short. This can be obtained by rotating
$u\rightarrow R^{-1/2}u$, which is valid since $R \succ 0$. We also
assume the system is stabilizable, which means that there
exists a matrix $K\in \mathbb{R}^{d_u \times d}$ such that
$\rho(A+BK)<1$, where $\rho(X) = \max\cbr{|\lambda_i(X)|}_i$ is the spectral radius of $X$ and $\lambda_i(X)$ refers to the eigenvalues. 
Furthermore, we denote $A_{\max} = \max_{i,j\in [d]}|A(i,j)|$ and
$B_{\max} = \max_{i\in[d],k\in [d_u]} |B(i,k)|$.

\paragraph{Notation.} 
We denote by $J_\star(x)$ the optimal value of the LQ problem $L$ from a
state $x$, and $K_\star(L)$ as the optimal policy of
$L$. We let $[n] = \cbr{1,..,n}$. Given two ordered lists $\Ical_1$ and
$\Ical_2$ we let $\Ical_2 / \Ical_1 =\cbr{x\in \Ical_2 | x\notin
  \Ical_1} $ denote their difference. Furthermore, given a vector
$x\in \mathbb{R}^d$ and a list $\Ical$  with entries in $[d]$ we let $x(\Ical)$ denote
the vector in $\mathbb{R}^{|\Ical|}$ which contains the coordinates of
$\Ical$, i.e., $x(\Ical) =\sbr{ \begin{matrix} x(\Ical(1)) & \cdots &
    x(\Ical(|\Ical|)) \end{matrix}}$. We denote $I_d$ as the identity
matrix of dimension $d$. The spectral/$L_2$ norm of a matrix is
denoted by $\norm{A}_{\op}$ and the Frobenius norm by $\norm{A}_F.$
We use $O(X)$ to refer to a quantity that depends on $X$ up to
constants, and denote $a\vee b = \max(a,b).$ Lastly, for a square matrix $A\in \mathbb{R}^{d\times d}$ we denote $\size(A)=d$.

\section{The Partially Controllable Linear-Quadratic Control Problem}
\label{sec: LQ with non essential noise}

In this section we formally define the LQ problem we analyze and later derive sample complexity results. We focus on an LQ problem
that consists of a partially controllable system and define an explicit notion of irrelevant state variables. Specifically, we establish that these state variables are irrelevant for optimally control this system, and, for that reason, we say the optimal controller of such a system is sparse.


A linear system is said to be \emph{partially controllable} if the
controllability matrix ${\Gcal = \sbr{\begin{matrix} B & AB &\cdots
    &A^d B \end{matrix}}}$ is not of a full rank, that is
$\rnk(\Gcal)=s_c<d$~(e.g., \citet{sontag2013mathematical}). For an LQ problem in such a system,
there exists a linear transformation $T$ that transforms the system and cost function to obtain an equivalent LQ control problem $\tilde{L} =(\tilde{A},\tilde{B},\tilde{Q})$ with the block structure of~\eqref{eq:uncontrollable_system}.
This representation reveals that the second block of coordinates $A^{\mathrm{PC}}_{2}$
cannot be affected by the controller inputs. As such, one might hope
that $A^{\mathrm{PC}}_{12}$ and $A^{\mathrm{PC}}_2$ are not required for optimal
control. Unfortunately, this is not the case, as we show in the next
simple example. Even when $\rnk(\Gcal)=1$ and $Q=I_d$, the optimal
policy may depend on the full dynamics of the uncontrollable subsystem
(see \pref{app: counterexample for general uncontrollable system} for detailed analysis).
\begin{example}[Necessity of uncontrollable dynamics for optimal control] \label{ex: non controllable part is necessary}
Let $\rho\in\mathbb{R}^{d-1},\norm{\rho}_\infty<1$,
\begin{align*}
    A_{\rho}
    =
    \sbr{
    \begin{matrix}
     1 & 1  & 1       &   \cdots     &  1   \\
     0 &\rho(1)  & 0         &  \cdots  &    0\\
      &     &  \vdots &       &      \vdots    \\
     0 &  0    &\cdots        & 0  &        \rho(d-1)
    \end{matrix}
    }, \quad 
    B
    =  
    \sbr{
    \begin{matrix}
     1 \\
     0 \\
     \vdots \\
     0
    \end{matrix}
    }, Q=I_d, 
\end{align*}
Let $L_{\rho} = (A_{\rho},B,I_d)$ be a stabilizable LQ problem. Then,  $K^\star(L_{\rho})$ is a function of $\rho$.
\end{example}

The example highlights that, without further structure, the optimal
policy may depend on $\Omega(d)$ parameters of the transition dynamics
$A$ even though only a small portion of the system is
controllable. Intuitively, this occurs because the uncontrollable
system interacts with the controllable one through matrix $A^{\mathrm{PC}}_{12}$
in~\eqref{eq:uncontrollable_system}, so the
optimal controller must plan for and react to the uncontrollable state. 

On the other hand, there are many systems in which some uncontrollable
state variables \emph{do not} affect the controllable ones whatsoever. 
The following model captures this scenario; we refer to this model as a Partially Controllable Linear Quadratic (PC-LQ) control problem.\footnote{Note that the results in this section apply to any system that is rotationally equivalent to~\eqref{eq: main body central lqr model }.}
\begin{align}
    \mathrm{(PC\text{-}LQ):}\quad A =
    \sbr{
    \begin{matrix}
     A_1 & A_{12} & 0\\
     0 & A_2 & 0 \\
     0 & A_{32} & A_3
    \end{matrix}},\
    B = 
    \sbr{\begin{matrix}
     B_1\\
     0\\
     0
    \end{matrix}
    },\ Q= I_d,\label{eq: main body central lqr model }
\end{align}
where $A_1\in \mathbb{R}^{s_c\times s_c}, A_2\in
\mathbb{R}^{s_{e}\times s_{e}}, A_3^{d-s\times d-s}, B_1\in
\mathbb{R}^{s_c\times d_u}$ and $s=s_e+s_c$. The linear system in a PC-LQ problem\footnote{For brevity, we will henceforth use ``a PC-LQ'' to stand for ``a PC-LQ control problem''.} can be decomposed into three components:
a controllable system, an uncontrollable relevant system, and an
uncontrollable irrelevant system, where the latter has no interaction
with the controllable system.  These are the first, second, and third
blocks on the diagonal, respectively. Furthermore, $A_{12}$ is a
coupling that allows the uncontrollable relevant dynamics to affect
the controllable ones, and $A_{32}$ is a coupling that allows the
uncontrollable relevant system to affect the irrelevant one. 
Observe that any LQ control problem can be written in the form of~\eqref{eq: main body central lqr model }, for some $s_c$ and $s_e$, where, for a general stable system, with no uncontrollable irrelevant dynamics,  $s_c+s_e=d$.


If the PC-LQ has $s < d$, then there are variables that are essential
for modeling the dynamics that are superfluous for optimal
control. Indeed, as we show in the next result, the optimal policy of
any PC-LQ problem does not depend on the entire transition dynamics,
specifically, the optimal controller is insensitive to the dynamics of
the uncontrollable irrelevant subsystem (blocks $A_3$ and
$A_{32}$). On the other hand, this subsystem can exhibit a very complex
temporal structure, so it is important for dynamics modeling/certainty
equivalence. Thus, even though the dynamics matrix $A$ is not a low-dimensional object,
when $s \ll d$, it is thus apt to say that the optimal policy of a
PC-LQ is low-dimensional. The following result explores two invariance properties of the optimal controller in a PC-LQ problem under cost and dynamics transformation (see~\pref{app: invariance of optimal policy of a PC-LQR} for the proof).

\begin{restatable}[Invariance of Optimal Policy for PC-LQ]{theorem}{propositionInvarianceOfOptimalPolicyWithDifferentDynamics}\label{thm: invariance of optimal policy}
Consider the following PC-LQ problems:
\begin{enumerate}
    \item Let $L_1=(A,B,I_d),L_2 = (A,B, I_{1+})$ be PC-LQ problems in stabilizable systems with similar dynamics. Let $I_{1+}$ be a diagonal matrix such that $(i)$ if $i\in [d]$ is a coordinate of the first block then $I_{1+}(i,i)=1$, and, $(ii)$ for any other $i\in [d]$, $I_{1+}(i,i)\in \cbr{0,1}$.
   \item Let $L_1=(A,B,I_d),L_2=(\bar{A},B,I_d)$ be PC-LQ problems in stabilizable systems such that
        \begin{align*}
            A =
            \sbr{
            \begin{matrix}
             A_1 & A_{12} & 0\\
             0 & A_2 & 0 \\
             0 & A_{32} & A_3
            \end{matrix}},\ \ 
            \bar{A} =
            \sbr{
            \begin{matrix}
             A_1 & A_{12} & 0\\
             0 & A_2 & 0 \\
             0 & \bar{A}_{32} & \bar{A}_3
            \end{matrix}},\ \ 
            B = 
            \sbr{\begin{matrix}
             B_1\\
             0\\
             0
            \end{matrix}
            }.
        \end{align*}
\end{enumerate}
Then, for both $(1)$ and $(2)$, the optimal policy of $L_1$ and $L_2$ is equal, i.e., $K^*(L_1)=K^*(L_2)$.
\end{restatable}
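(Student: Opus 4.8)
The plan is to prove a single structural fact from which both invariance claims follow as corollaries: for every stabilizable PC-LQ with diagonal cost whose block-$1$ weight is the identity, the optimal controller has the form $K_\star = \begin{bmatrix} \tilde K_1 & \tilde K_2 & 0 \end{bmatrix}$, where the zero block occupies the third (irrelevant) block of columns and where $\tilde K_1, \tilde K_2$ depend on the data only through $(A_1, A_{12}, A_2, B_1)$ and the block-$1$ cost weight. Granting this, part (1) follows because $I_d \to I_{1+}$ only alters the cost weights $Q_2, Q_3$ on blocks $2$ and $3$, and part (2) follows because $A \to \bar A$ only alters $A_{32}, A_3$; since none of $Q_2, Q_3, A_{32}, A_3$ enters $\tilde K_1, \tilde K_2$, and the third block of columns is zero in both problems, the optimal controllers must coincide.

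The mechanism is an invariance of the \emph{controller-relevant} part of the value function. Because the first block of columns of $A$ is $\begin{bmatrix} A_1 & 0 & 0 \end{bmatrix}^\top$ and $B = \begin{bmatrix} B_1 & 0 & 0 \end{bmatrix}^\top$, the sub-state $(x_2, x_3)$ evolves autonomously, independently of $x_1$ and of the inputs $u_t$. Hence, with $Q$ diagonal, the expected cost splits into a policy-dependent term involving only $(x_{1,t}, u_t)$ and a purely exogenous term $\mathbb{E}\sum_t [x_{2,t}^\top Q_2 x_{2,t} + x_{3,t}^\top Q_3 x_{3,t}]$ that is constant across all policies. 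I would formalize this at the level of the value function by writing $J_\star(x) = v(x_1, x_2) + g(x_2, x_3)$, where $v$ is the optimal cost-to-go of the \emph{reduced} system supported on blocks $1$ and $2$ and $g$ is the input-free cost-to-go of the autonomous blocks. Equivalently, partitioning the (symmetric) stabilizing Riccati solution $P_\star$ into $3\times 3$ blocks, this decomposition asserts the single fact that the $x_1$--$x_3$ cross-block vanishes, $P_{13} = P_{31}^\top = 0$, while $P_{22}, P_{23}, P_{33}$ genuinely change with the irrelevant data; this last point is exactly why a blanket Riccati-perturbation bound on $P_\star$ cannot work.

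With $P_{13}=0$ in hand, the controller read-off is mechanical. Since $B = \begin{bmatrix} B_1 & 0 & 0 \end{bmatrix}^\top$, the quantity $B^\top P_\star$ sees only the first block-row $\begin{bmatrix} P_{11} & P_{12} & P_{13} \end{bmatrix}$ of $P_\star$; using $P_{13}=0$, the product $B^\top P_\star A$ has a vanishing third block of columns and first two blocks equal to $B_1^\top P_{11} A_1$ and $B_1^\top(P_{11}A_{12} + P_{12}A_2)$. Thus $K_\star = (I + B^\top P_\star B)^{-1} B^\top P_\star A$ has the claimed form and depends on $P_\star$ only through $P_{11}$ and $P_{12}$. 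It then remains to check that $(P_{11}, P_{12})$ are themselves insensitive to the irrelevant data: substituting the ansatz $P_{13}=0$ into the Riccati equation, the $(1,3)$ block is self-consistent and the $(1,1)$ and $(1,2)$ blocks form a closed subsystem. The $(1,1)$ block reduces to the standalone Riccati equation of $(A_1, B_1, Q_1)$, and, given its solution, the $(1,2)$ block is a linear Sylvester/Stein-type equation in $P_{12}$ involving only $A_1, A_{12}, A_2, B_1$. Neither $Q_2, Q_3$ nor $A_{32}, A_3$ appears, which is precisely the required invariance.

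The step I expect to be the main obstacle -- and what distinguishes this from standard perturbation analysis -- is rigorously establishing the \emph{selective} structure $P_{13}=0$ together with the decoupling of $(P_{11},P_{12})$ from the remaining blocks, and doing so when only the relevant modes are directly penalized (as in part (1), where $Q_2, Q_3$ may be zero). Here I would lean on stabilizability: since blocks $2$ and $3$ are uncontrollable, stabilizability forces $\rho(A_2)<1$ and $\rho(A_3)<1$, which makes the exogenous cost-to-go $g$ finite so that the decomposition $J_\star = v + g$ is well defined, and which guarantees a unique stabilizing $P_\star$ (hence a unique optimal $K_\star$) even when those blocks carry no cost. Uniqueness is what upgrades the conclusion from "agreement on blocks $1$ and $2$" to equality of the \emph{full} matrices $K_\star(L_1) = K_\star(L_2)$, including their action on the irrelevant coordinates.
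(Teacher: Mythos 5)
Your proof is correct, but it takes a genuinely different route from the paper's. You work directly on the algebraic Riccati equation: with the PC-LQ block structure, the $(1,1)$ block decouples into the standalone Riccati equation for $(A_1,B_1,Q_1)$, the $(1,3)$ block becomes the homogeneous Stein equation $P_{13}=(A_1+B_1K_1)^{\top}P_{13}A_3$ (forcing $P_{13}=0$ since both coefficient matrices are stable --- $A_1+B_1K_1$ by the stabilizing property, $A_3$ by stabilizability of an uncontrollable block), and the $(1,2)$ block becomes a Sylvester-type equation involving only $(A_1,A_{12},A_2,B_1,P_{11})$; reading off $K_\star=(R+B^{\top}P_\star B)^{-1}B^{\top}P_\star A$ then handles both parts of the theorem uniformly. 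The paper instead never analyzes the Riccati fixed point directly: part (1) is proved by a trajectory-level argument that the cost difference $J_{L_2,K}-J_{L_1,K}$ is a policy-independent constant (because the discounted coordinates are uncontrollable), and part (2) is proved by reducing the cost to $I_1$ via part (1) and then running policy iteration on both systems from a common initialization $[K_0,0,0]$, showing by induction that the iterates coincide and preserve the sparsity pattern. Your approach buys an explicit closed-form characterization of $K_\star$ (and of exactly which Riccati blocks carry the irrelevant data), which is arguably sharper; the paper's approach buys a way to sidestep existence/uniqueness questions for the stabilizing solution when $Q$ is only positive semidefinite, since policy iteration converges to the maximal solution regardless. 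You correctly identify that this is the delicate point in your route and address it via stabilizability of the uncontrollable blocks; to make it fully rigorous you would want to either (a) argue that \emph{any} stabilizing solution must have $P_{13}=0$ (the Stein-equation argument does this), or (b) construct the structured solution and invoke uniqueness of the stabilizing solution --- either closes the gap, so I view this as a matter of write-up rather than a missing idea.
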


Of course, since $Q=I_d$, the optimal value functions for $L_1$ and
$L_2$ will -- in general -- be quite different. Since the uncontrollable blocks $A_3$ and $A_{32}$ of a PC-LQ are irrelevant to optimally control it, we refer to both of the block as the \emph{irrelevant blocks} from this point onward. This highlights the fact that the LQR of a PC-LQ is sparse: it does not depends on the parameters of the irrelevant blocks.

\subsection{Characterization via controllability and the relevant disturbances matrices}
A natural question is to understand when a system is equivalent to a PC-LQ with an
irrelevant subsystem. The next result provides a
characterization of PC-LQ in terms of the controllability matrix and
a new object that we call the \emph{relevant disturbances matrix}.
Recall that any LQ problem with controllability index $s_c$ can be
rotated into the form~\eqref{eq:uncontrollable_system}.  For brevity, denote $X_{12} = A_{12}^{\mathrm{PC}}$ and $X_{2} = A_{2}^{\mathrm{PC}}$. Let the relevant disturbances matrix using this representation be
\begin{align}
    \Rcal\Dcal= \sbr{\begin{matrix} X^{\top}_{12} & X^T_2 X_{12}^{\top} & \cdots & (X^T_2)^{d-s_c} X_{12}^{\top} \end{matrix}}. \label{eq: definition relevant disturbances matrix}
\end{align}
Then, we have the following structural characterization of a PC-LQ through the controllability and relevant disturbances Krylov matrices (see \pref{app: structural properties of PC-LQ} for the proof). 



\begin{restatable}[Controllability characterization of PC-LQ]{proposition}{propositionExistenceOfNonEssentialDisturbances}\label{prop: necessary and sufficient for existence of non essential disturbances}
If $L$ has controllability index $s_c$ and $\textrm{rank}(\Rcal\Dcal) = s_e$ then $L = (A,B,I_d)$ is rotationally equivalent to~\eqref{eq: main body central lqr model }.
\end{restatable}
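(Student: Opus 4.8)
The plan is to recognize the relevant disturbances matrix $\Rcal\Dcal$ as the transpose of an \emph{observability matrix} and then to realize the claimed block structure by two successive Kalman decompositions, each carried out in an orthonormal basis so that the cost $Q=I_d$ is preserved throughout. I would first invoke the controllability (Kalman) decomposition: since $L$ has controllability index $s_c<d$, the controllable subspace $\image(\Gcal)$ is $A$-invariant and contains $\image(B)$, so choosing an orthonormal basis whose first $s_c$ vectors span $\image(\Gcal)$ and whose remaining vectors span its orthogonal complement gives an orthogonal $T_1$ under which $(A,B)$ takes the two-block form~\eqref{eq:uncontrollable_system}, with blocks $A_1$, $X_{12}=A_{12}^{\mathrm{PC}}$ and $X_2=A_2^{\mathrm{PC}}$. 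Orthogonality guarantees $T_1^\top I_d T_1 = I_d$, so the cost is unchanged and we remain in the $Q=I_d$ class.

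The key observation is the second step. Reading off~\eqref{eq: definition relevant disturbances matrix}, the relevant disturbances matrix equals $\Rcal\Dcal=\mathcal{O}^\top$, where $\mathcal{O}=\sbr{\begin{matrix} X_{12}\\ X_{12}X_2\\ \vdots\\ X_{12}X_2^{d-s_c}\end{matrix}}$ is the observability matrix of the pair $(X_{12},X_2)$, viewing $X_{12}$ as an output matrix and $X_2$ as the dynamics. Consequently $\rnk(\Rcal\Dcal)=\rnk(\mathcal{O})=s_e$ is precisely the dimension of the observable subspace; the unobservable subspace $N:=\ker\mathcal{O}=\image(\Rcal\Dcal)^\perp$ has dimension $d-s_c-s_e=d-s$, it is $X_2$-invariant, and it satisfies $N\subseteq\ker X_{12}$ (both are standard consequences of the definition of $\mathcal{O}$; the extra $(X_2^\top)^{d-s_c}$ block in $\Rcal\Dcal$ does not change the rank, by Cayley--Hamilton).

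Finally I would apply the observability decomposition inside the uncontrollable block: choosing an orthonormal basis of $\mathbb{R}^{d-s_c}$ whose first $s_e$ vectors span $\image(\Rcal\Dcal)=N^\perp$ and whose last $d-s$ vectors span $N$ yields an orthogonal $S$ under which $X_2\mapsto\sbr{\begin{matrix} A_2 & 0\\ A_{32} & A_3\end{matrix}}$ and $X_{12}\mapsto\sbr{\begin{matrix} A_{12} & 0\end{matrix}}$, with $A_2\in\mathbb{R}^{s_e\times s_e}$ and $A_3\in\mathbb{R}^{(d-s)\times(d-s)}$; the upper-right zero block and the trailing zero are forced by $X_2 N\subseteq N$ and $N\subseteq\ker X_{12}$, respectively. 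Conjugating by the orthogonal $T_2=\mathrm{diag}(I_{s_c},S)$ then produces exactly the three-block PC-LQ form~\eqref{eq: main body central lqr model }, with $B$ retaining its shape and $Q=I_d$ preserved; the total rotation is the orthogonal product $T_1 T_2$.

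The main obstacle is conceptual rather than computational: spotting that $\Rcal\Dcal$ is the observability matrix (up to transpose) of $(X_{12},X_2)$, so that its rank reads off the observable dimension $s_e$ and its column space identifies the relevant subspace. The only technical care needed is to carry out both decompositions in orthonormal bases---which is always possible, since the relevant invariant subspaces $\image(\Gcal)$ and $N$ admit orthonormal bases extendable to the whole space---so that the identity cost survives and the transformed problem is genuinely a PC-LQ with $Q=I_d$ rather than merely a similarity transform of one.
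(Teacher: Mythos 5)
Your proposal is correct and follows essentially the same route as the paper: a Kalman controllability decomposition followed by a second decomposition of the uncontrollable block driven by the rank of $\Rcal\Dcal$, with the two transformations composed into a single change of basis. The only cosmetic difference is that you read $\Rcal\Dcal$ as the observability matrix of $(X_{12},X_2)$ while the paper reads it as the controllability matrix of the transposed pair and reuses its Lemma 3.3.3 --- these are dual statements of the same fact --- and your explicit insistence on orthonormal bases (so that $Q=I_d$ is preserved and the equivalence is genuinely rotational) is a welcome extra point of care that the paper's proof leaves implicit.
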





\subsection{Characterization via minimal invariant subspaces}\label{sec: invariant subspaces}


We next characterize a PC-LQ via the notion of minimal invariant subspaces. This characterization is more useful for our subsequent algorithmic development. Minimal invariant subspaces (w.r.t., an initial subspace) are formalized in the next definition.
\begin{definition}[Minimal invariant subspace w.r.t. another subspace, e.g.,~\cite{basile1992controlled}]\label{def: minimal invariant subspace}
Let $K$ be a subspace and $A\in \mathbb{R}^{n\times n}$.  Subspace $V$ is an \emph{invariant subspace} of $A$ w.r.t. $K$ if
    $(i)$, $K\subseteq V$, and
    $(ii)$ $AV \subset V$. $V$ is the \emph{minimal invariant subspace} of $A$ w.r.t. $K$ if $(i)$ and $(ii)$ hold and  $V$ is the subspace with the smallest dimension that satisfies both $(i)$ and $(ii)$. 
\end{definition}
That is, the minimal invariant subspace of $A$ w.r.t. $K$ is the
smallest subspace that contains $K$ and is closed/invariant under the
action of $A$, meaning that $Av \subset V$ for any $v \in V$. In
\pref{app: invariant subspace and minimal invarinat subspace} we show that the minimal invariant subspace is always
unique, and, thus, it is always well defined. 



The next result shows that the first and second blocks of a
partially controllable system can be expressed in terms of two minimal invariant
subspaces. This yields a simple algebraic characterization of the
relevant components of the system, which we will use to develop
algorithms (see \pref{app: structural properties of PC-LQ} for the proof).
\begin{restatable}[PC-LQ and Minimal Invariant Subspaces]{proposition}{PropFromLQRUDtoMIS}\label{prop: lqr rud and minimal invariant subspaces}
An LQ problem is equivalent to PC-LQ~\eqref{eq: main body central lqr model } if and only if there exist projection matrices with $\rnk(P_B)\leq \rnk(P_c)\leq \rnk(P_r)$ where
\begin{enumerate}
    \item $P_c$ is an invariant subspace of $A$ w.r.t. $P_B$ and $\rnk\rbr{P_c} = s_c$,
    \item $P_{r}$ is an invariant subspace of $(I-P_c)A^{\top}$ w.r.t.~$P_c$ and $\rnk\rbr{P_{r}} = s_c+s_e=s$,
\end{enumerate}
such that $A,B$ can be written as
\begin{align*}
    A = P_c A P_c + P_{r}A (P_{r}-P_{c}) + (I-P_{r})A(I-P_c),\quad  B = P_B B,
\end{align*}
Furthermore, the subspaces $P_c$ and $P_r$ are the minimal invariant subspaces if and only if the controllability matrix is of rank $s_c$ and the relevant disturbances matrix is of rank $s_e$.
\end{restatable}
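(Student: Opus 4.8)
The plan is to pin down the three projections explicitly in the block frame of~\eqref{eq: main body central lqr model } and then move them between coordinate systems by orthonormal conjugation, treating the two implications symmetrically. In the PC-LQ frame, take the orthogonal projections $\tilde P_c = \mathrm{diag}(I_{s_c},0,0)$, $\tilde P_r = \mathrm{diag}(I_{s_c},I_{s_e},0)$, and let $\tilde P_B$ be the orthogonal projection onto $\image(\tilde B)\subseteq\image(\tilde P_c)$. A direct block computation then verifies everything at once: the rank inequalities $\rnk(\tilde P_B)\le s_c\le s$; the inclusions $\image(\tilde P_B)\subseteq\image(\tilde P_c)\subseteq\image(\tilde P_r)$ together with $\tilde A\,\image(\tilde P_c)\subseteq\image(\tilde P_c)$ and $(I-\tilde P_c)\tilde A^{\top}\image(\tilde P_r)\subseteq\image(\tilde P_r)$, which are exactly the invariance conditions (1)--(2); the identity $\tilde B=\tilde P_B\tilde B$; and the decomposition, since $\tilde P_c\tilde A\tilde P_c$, $\tilde P_r\tilde A(\tilde P_r-\tilde P_c)$, and $(I-\tilde P_r)\tilde A(I-\tilde P_c)$ extract precisely the $A_1$, the $(A_{12},A_2)$, and the $(A_{32},A_3)$ blocks.

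For the ``only if'' direction, suppose $L$ is rotationally equivalent to~\eqref{eq: main body central lqr model } through an orthogonal $T$, so that $\tilde A=TAT^{\top}$ and $\tilde B=TB$ are in PC-LQ form. I would define $P_\bullet:=T^{\top}\tilde P_\bullet T$ for $\bullet\in\{c,r,B\}$. Conjugation by an orthogonal matrix sends orthogonal projections to orthogonal projections, preserves ranks, and commutes with transposition (since $\tilde A^{\top}=TA^{\top}T^{\top}$); hence it preserves images, the relations $A V\subseteq V$, and the additive decomposition. Every property checked for $(\tilde P_c,\tilde P_r,\tilde P_B,\tilde A,\tilde B)$ therefore pulls back verbatim to $(P_c,P_r,P_B,A,B)$, producing the required projections.

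For the ``if'' direction, assume orthogonal projections satisfying (1)--(2) and the decomposition are given. The invariance definitions force the nested images $\image(P_B)\subseteq\image(P_c)\subseteq\image(P_r)$, so I can build an orthonormal basis whose first $s_c$ vectors span $\image(P_c)$, whose next $s_e$ vectors span $\image(P_r)\ominus\image(P_c)$, and whose last $d-s$ vectors span $\image(P_r)^{\perp}$; collecting these as the rows of an orthogonal $T$ gives $TP_cT^{\top}=\tilde P_c$ and $TP_rT^{\top}=\tilde P_r$. Conjugating the assumed decomposition by $T$ then shows that $\tilde A=TAT^{\top}$ has vanishing $(2,1),(3,1),(1,3),(2,3)$ blocks, i.e.\ exactly the shape~\eqref{eq: main body central lqr model }, while $B=P_BB$ with $\image(P_B)$ inside the first block yields $\tilde B=(\tilde B_1,0,0)^{\top}$. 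Thus $L$ is rotationally equivalent to a PC-LQ.

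For the ``furthermore'' claim I would identify each minimal invariant subspace with a Krylov/reachability subspace via the standard Cayley--Hamilton argument. The minimal $A$-invariant subspace containing $\image(B)=\image(P_B)$ is the reachable subspace $\spn(\Gcal)$ of dimension $\rnk(\Gcal)$, so the rank-$s_c$ subspace $P_c$ is minimal iff $s_c=\rnk(\Gcal)$. For $P_r$, passing to the controllable/uncontrollable frame~\eqref{eq:uncontrollable_system} where $(I-P_c)A^{\top}$ restricts to $\left(\begin{smallmatrix} 0 & 0 \\ X_{12}^{\top} & X_2^{\top}\end{smallmatrix}\right)$, iterating this operator from $\image(P_c)$ generates $\image(P_c)$ together with the Krylov span of $(X_2^{\top},X_{12}^{\top})$, which is the column space of the relevant disturbances matrix $\Rcal\Dcal$ of~\eqref{eq: definition relevant disturbances matrix}. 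Hence the minimal invariant subspace of $(I-P_c)A^{\top}$ w.r.t.\ $P_c$ has dimension $s_c+\rnk(\Rcal\Dcal)$, and the rank-$s$ subspace $P_r$ is minimal iff $s_e=\rnk(\Rcal\Dcal)$. The block bookkeeping is routine; the delicate step is this second Krylov identification, because the relevant-disturbances subspace is built by the \emph{transposed and projected} operator $(I-P_c)A^{\top}$ rather than by $A$, so the main obstacle is to track how $I-P_c$ annihilates the controllable component at each iteration and thereby exposes the pure $(X_2^{\top},X_{12}^{\top})$ structure of $\Rcal\Dcal$ in a basis-independent way.
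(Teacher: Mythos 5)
Your proposal is correct and follows essentially the same route as the paper: exhibit the explicit block-diagonal projections in the PC-LQ frame and transport them by (orthogonal) conjugation for the forward direction, build a basis adapted to the nested images $\image(P_B)\subseteq\image(P_c)\subseteq\image(P_r)$ and read off the vanishing blocks from the decomposition for the converse, and reduce the minimality claim to the Krylov-span characterization of minimal invariant subspaces. If anything, your treatment of the second Krylov identification for $(I-P_c)A^{\top}$ and the relevant disturbances matrix is spelled out in more detail than the paper, which simply cites its Krylov lemma; the computation you sketch (each iterate landing in $\{0\}\times\image((X_2^{\top})^kX_{12}^{\top})$) does go through.
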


With the above notation, the subspace $P_c$ represents the first block
of~\eqref{eq: main body central lqr model }, and $P_r$ represents
the first two blocks which are generally required for optimally control a PC-LQ. The matrix $(I-P_r)A(I-P_c)$
represents the irrelevant blocks of a PC-LQ which we can
safely ignore by \pref{thm: invariance of optimal policy}.

\section{Learning Sparse LQRs in Partially Controllable Systems}\label{sec: learning sparse lqr}

\begin{algorithm}[t]
\caption{Learning Optimal Policy of PC-LQ}
\label{alg: control pc lqr samples}
\begin{algorithmic}[1]
\State {\bf Require:} $\epsilon,\delta>0$, $\mathrm{STh}_{\epsilon}(x) = \ind\cbr{|x|>\epsilon}(x-\mathrm{sign}(x)\epsilon) $ 
\State Get $\widehat{A}$ and $\widehat{B}$, an $(\epsilon,\delta)$ element-wise estimates of $A$ and $B$, respectively    
\State Soft threshold  the empirical estimates element-wise, $\bar{B}= \mathrm{Th}_{\epsilon}(\widehat{B}), \bar{A}= \mathrm{Th}_{\epsilon}(\widehat{A})$
\State {\bf Return: } Optimal policy of $\bar{L}=(\bar{A},\bar{B},I)$ 
\end{algorithmic}
\end{algorithm}

We now turn to our main question and focus on the learnability of optimal policy in PC-LQ. We assume that the model is transformed to be in the
form of~\eqref{eq: main body central lqr model }, so it is
\emph{axis-aligned} up to permutations, i.e., the irrelevant state variables are not a-priori known to the algorithm designer. 
We further assume  $\size(A_1) + \size(A_2)=s_e+s_c = s \ll d$. Of course, as we
have discussed, the dynamics matrix $A$ itself \emph{is not sparse}, but the optimal policy of such system, the LQR, is sparse. \pref{thm: invariance of optimal policy} establishes the LQR depends only on $O(poly(s))$ parameters. Thus, we hope for sample complexity guarantees that
scale primarily with the intrinsic dimension $s$, rather than the
ambient dimension $d$.

\begin{remark}[Axis-aligned assumption]
The axis-aligned assumption is a natural extension of the sparsity assumption made in sparse regression literature (e.g.,~\cite{wainwright2019high}, Chapter 7). In control problems, this assumption may be satisfied when the state variables $x$ arise from physical measurements. In this case, axis-alignment corresponds to negligible coupling between different state variables that represent measurements in different locations (as elaborated in~\pref{ex: stanfel}).  Furthermore, all the results generalize naturally when the rotation for which the LQ problem can be written as~\eqref{eq: main body central lqr model } is known. We comment that asymptotic dimension-free bounds for system identifications without the axis-aligned assumptions are impossible, due to the need to learn the rotation matrix. We leave it as an interesting future question to study whether asymptotic dimension-free bounds are possible for general PC-LQ problems.  
\end{remark}

By \pref{prop: lqr rud and minimal invariant subspaces} the
optimal controller is insensitive to errors in $(I-P_r)A(I-P_c)$,
corresponding to block 3 of the dynamics matrix. However, to take
advantage of this, we must first identify the zero pattern of the
matrix $A$. More formally, we seek estimates $(\bar{A},\bar{B})$ of the
dynamics satisfying the following \emph{no false positive} property:
\begin{align}
    \forall i,j\in [d],\ k\in [d_u]: \ A(i,j)=0 \Rightarrow \bar{A}(i,j)=0,\ \mathrm{and}\ B(i,k)=0 \Rightarrow \bar{B}(i,k)=0. \label{eq: main paper goal for estimate}
\end{align}
Indeed, in the presence of such a condition, we can ensure that there
is no interaction between the relevant and irrelevant parts of the
system in the \emph{estimated model}, so that $(\bar{A},\bar{B})$ is a PC-LQ
with a similar block structure to the true dynamics.


A natural way to obtain estimates of $(A,B)$ that satisfy
~\eqref{eq: main paper goal for estimate} is to perform
\emph{soft-thresholding} on an entrywise accurate initial
estimate. Note that the soft-thresholding operation does not introduce
much additional error.  Since many options are available for obtaining
the initial estimate, we formalize this via an oracle that we call the
\emph{entrywise estimate}. In \pref{sec: estimation oracle
  sample complexity}, we instantiate this oracle with two different
procedures and analyze their sample complexity.


\begin{definition}[Entrywise estimator]
\label{def: dynamical system estimation oracle}
We say that $\widehat{X}$ is an $(\epsilon,\delta)$ entrywise estimator of a matrix $X\in \mathbb{R}^{d_1\times d_2}$ if with probability at least $1-\delta$ we have $\max_{i,j} |\widehat{X}(i,j) - X(i,j)| \leq \epsilon$.
\end{definition}

Given access to such an oracle, \pref{alg: control pc lqr
  samples} learns an optimal policy in a PC-LQ problem. First, it
estimates $(A,B)$ via the entrywise estimator, to obtain
$(\widehat{A},\widehat{B})$. Second, it applies a soft-thresholding to
these estimates to get $\bar{A},\bar{B}$. Finally, it returns the
optimal policy of the LQ problem $\bar{L} =
(\bar{A},\bar{B},I)$. 

For the analysis, we require a technical assumption on the $L_{\infty}$ stability
of the irrelevant subsystem~$A_3$.


\begin{assumption}[$L_\infty$-stability of irrelevant dynamics]\label{assum: L1 stability on $A_3$}
$A_3$ is $L_\infty$ stable: $\max_{i} \sum_{j}|A_3(i,j)| = \norm{A_3}_\infty<1$.
\end{assumption}

In addition, our guarantee scales with the operator norm of the
optimal value function for the \emph{relevant} subsystem only. Formally, let
$L_{1:2} = (A_{1:2},B_{1:2},I_{1:2})$ be an $s$-dimensional LQ problem
defined by the first two blocks of~\eqref{eq: main body central lqr
  model } and let $P_{\star,1:2}$ be the solution to the Ricatti
equation for this system. The guarantee is given as follows (see \pref{app: from PC-LQ to MIS} for the proof).


\begin{restatable}[Learning the PC-LQR]{theorem}{LQRUDviaMIS}\label{thm: PC-LQ main result}
Fix $\epsilon,\delta>0$. Assume access to an entrywise estimator of $(A,B)$ with parameters $(\sqrt{\epsilon/\rbr{2s(s+d_u)}},\delta)$, and that~\pref{assum: L1 stability on $A_3$} holds.  Then, if $\epsilon<1/ \norm{P_{\star,1:2}}^{10}_{\op}$, with probability greater than $1-\delta$ \pref{alg: control pc lqr samples} outputs a policy $\bar{K}$ such that $$J_\star(\bar{K}) \leq J_\star + O(\norm{P_{\star,1:2}}_{\op}^8 \epsilon) .$$
\end{restatable}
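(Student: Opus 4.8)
The plan is to decouple the argument into three stages: (i) a thresholding/structure-recovery step, (ii) a dimension reduction via the invariance theorem, and (iii) a standard Riccati/certainty-equivalence perturbation bound applied to the $s$-dimensional relevant subsystem. First I would analyze the soft-thresholding operator. Since $\widehat A,\widehat B$ are $(\epsilon',\delta)$ entrywise estimators with $\epsilon'=\sqrt{\epsilon/(2s(s+d_u))}$, on the good event (probability $\ge 1-\delta$) every entry is within $\epsilon'$ of the truth; wherever $A(i,j)=0$ we have $|\widehat A(i,j)|\le\epsilon'$, so thresholding at level $\epsilon'$ sends it to $0$, establishing the no-false-positive property~\eqref{eq: main paper goal for estimate}. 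Because $\mathrm{STh}_{\epsilon'}$ moves each entry by at most $\epsilon'$, combining with the estimation error yields $|\bar A(i,j)-A(i,j)|\le 2\epsilon'$ and likewise for $\bar B$. Consequently $\bar L=(\bar A,\bar B,I)$ is again a PC-LQ with exactly the block sparsity pattern of~\eqref{eq: main body central lqr model } (the zero blocks are preserved), whose relevant blocks $(\bar A_{1:2},\bar B_{1:2})$ differ entrywise from $(A_{1:2},B_{1:2})$ by at most $2\epsilon'$.

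Next I would invoke \pref{thm: invariance of optimal policy}(2): since $\bar L$ is a PC-LQ, its optimal controller is insensitive to the irrelevant blocks $(\bar A_{32},\bar A_3)$, so $\bar K=K^\star(\bar L)$ coincides, on the relevant coordinates, with the optimal controller of the $s$-dimensional problem $\bar L_{1:2}=(\bar A_{1:2},\bar B_{1:2},I_{1:2})$ and is zero on the irrelevant coordinates; the same holds for $K^\star(L)$ and $L_{1:2}$. I would then argue that the full-system suboptimality equals the relevant-subsystem suboptimality. With $Q=I_d$ the trajectories of blocks $2$ and $3$ are exogenous to the control (the $(2,1)$ and $(3,1)$ blocks of $A$ vanish and $B_{2},B_{3}=0$), so $J(K)=\text{const}+J_{1:2}(K_{1:2})$, where the constant $\sum_t\EE(\|x^{(2)}_t\|^2+\|x^{(3)}_t\|^2)$ is independent of $K$. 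Here \pref{assum: L1 stability on $A_3$} enters: $\|A_3\|_\infty<1$ forces $\rho(A_3)<1$, so the block-$3$ contribution is finite and, being control-independent, cancels in the difference. This reduces the claim to bounding $J_{1:2}(\bar K_{1:2})-J_{1:2}(K^\star(L_{1:2}))$.

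Finally I would apply a certainty-equivalence Riccati perturbation bound for the $s$-dimensional problem $L_{1:2}$ (of Mania--Tu--Recht type): applying the optimal controller of a model that is $\Delta$-close to the truth incurs suboptimality $O(\|P_{\star,1:2}\|_{\op}^8\,\|\Delta\|^2)$, provided $\|\Delta\|$ is below a $\|P_{\star,1:2}\|_{\op}^{-5}$-type threshold (which also guarantees that $\bar K_{1:2}$ stabilizes $L_{1:2}$). The perturbation magnitude is controlled by counting entries: $\|\Delta\|_{\op}^2\le\|\Delta A_{1:2}\|_F^2+\|\Delta B_{1:2}\|_F^2\le(2\epsilon')^2(s^2+sd_u)=2\epsilon$ by the choice of $\epsilon'$, and the hypothesis $\epsilon<\|P_{\star,1:2}\|_{\op}^{-10}$ guarantees $\|\Delta\|_{\op}\lesssim\|P_{\star,1:2}\|_{\op}^{-5}$, meeting the smallness condition. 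Substituting gives $J_\star(\bar K)-J_\star=O(\|P_{\star,1:2}\|_{\op}^8\epsilon)$.

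I expect the main obstacle to be stage (ii): rigorously reducing the $d$-dimensional suboptimality to the $s$-dimensional one. The invariance theorem supplies equality of the optimal \emph{policies}, but one must still show that the suboptimality \emph{gaps} coincide on the true system, which requires isolating the control-independent cost of the uncontrollable blocks and verifying, via \pref{assum: L1 stability on $A_3$}, that the possibly high-dimensional and dense irrelevant block contributes a finite quantity common to both policies and hence cancels. Care is also needed to ensure $\bar L_{1:2}$ remains stabilizable under the perturbation and that the error feeding into the perturbation bound is genuinely dimension-free, which is exactly what the scaling $\epsilon'=\sqrt{\epsilon/(2s(s+d_u))}$ delivers.
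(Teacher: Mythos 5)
Your proposal is correct and follows essentially the same route as the paper's proof: soft-thresholding yields the no-false-positive property so the estimated system is a PC-LQ with the same block pattern, the invariance theorem (plus $L_\infty$-stability of $A_3$, which also makes $\bar A_3$ stable since thresholding only shrinks entries) reduces both the true and estimated problems to the $s$-dimensional relevant subsystem with a policy-independent cost offset, and the certainty-equivalence Riccati perturbation bound (the paper uses Theorem 5 of Simchowitz--Foster, which supplies exactly the $\|P_{\star,1:2}\|_{\op}^{8}$ and $\|P_{\star,1:2}\|_{\op}^{-5}$ constants you anticipate) applied with Frobenius error $O(\sqrt{\epsilon})$ finishes the argument. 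Your cost-decomposition framing of stage (ii) is just a repackaging of the paper's step that the value difference between the original and invariant systems is a policy-independent constant.
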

To prove this result we utilize the machinery of \pref{thm: invariance of optimal policy}, \pref{prop: lqr rud and minimal invariant subspaces}, the perturbation result of~\cite{simchowitz2020naive}, and the no-false positive property of the estimated model. 

\section{Sample Complexity for Entrywise Estimation}\label{sec: estimation oracle sample complexity}

We now instantiate two entrywise estimators and establish their sample
complexity guarantees in two settings. First, when the initial state
$x_0$ has a diagonal covariance matrix, we show that a simple second-moment estimator suffices. In the more general setting where the
initial state $x_0$ has PD covariance, we develop an
estimator based on semiparametric least-squares. The first estimator
has better sample complexity guarantees, while the second estimator
is more general.


\subsection{Diagonal covariance matrix}\label{sec: diagonal matrix assumption}
When the initial state $x_0$ has a diagonal covariance matrix, we analyze a simple second-moment estimator. Specifically we estimate the model with
\begin{align}
\widehat{A} = \frac{1}{N\sigma_0^2}\sum_{n} x_{1,n} x_{0,n}^{\top}, \quad \textrm{and} \quad \widehat{B} =\frac{1}{N}\sum_{n} x_{1,n} u_{0,n}^{\top}, \label{eq:second_moment_estimator}
\end{align}
given $N$ partial trajectories $\{(x_{0,i}, u_{0,i}, x_{1,i})\}_{i=1}^N$ where $u_{0,i} \sim \mathcal{N}(0, I_{d_u})$. 
For this estimator we prove the following (see \pref{app: disc estimation via thresholding} for a proof):

\begin{restatable}[Entrywise estimation with diagonal covariance] {proposition}{theoremDiscViaThreshold}\label{prop: elementwise convergence of second-moment based estimation}
Assume that $x_0\sim \Ncal(0,\sigma_0 I_d)$ and that~\pref{assum: L1 stability on $A_3$} holds. Denote $\sigma_{\mathrm{eff}} = 1 + A_{\max}\sqrt{s}+ \rbr{1+B_{\max}\sqrt{d_u}}\rbr{(\sigma/\sigma_0) \vee \sigma}.$ Then, given  $N = O\rbr{\frac{\log\rbr{\frac{d}{\delta}}\sigma^2_{\mathrm{eff}}}{\epsilon^2}}$ samples~\eqref{eq:second_moment_estimator} is an entrywise estimator of $(A,B)$ with parameters $(\epsilon,\delta)$.
\end{restatable}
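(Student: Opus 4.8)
The plan is to show that each estimator is an unbiased empirical average of independent sub-exponential random variables, bound the per-sample sub-exponential norm uniformly over all coordinates, and then apply a Bernstein-type tail bound entrywise, closing with a union bound over the $d^2 + d\,d_u$ coordinates of $A$ and $B$.

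First I would establish unbiasedness. Writing $x_1 = A x_0 + B u_0 + \xi_0$ and using that $x_0 \sim \Ncal(0,\sigma_0^2 I_d)$, $u_0 \sim \Ncal(0,I_{d_u})$, and $\xi_0$ are mutually independent and mean-zero, the cross terms vanish, so $\EE[x_1 x_0^{\top}] = A\,\EE[x_0 x_0^{\top}] = \sigma_0^2 A$ and $\EE[x_1 u_0^{\top}] = B\,\EE[u_0 u_0^{\top}] = B$. Hence $\EE[\widehat{A}] = A$ and $\EE[\widehat{B}] = B$, and each coordinate $\widehat{A}(i,j)$ (resp. $\widehat{B}(i,k)$) is an empirical mean of $N$ i.i.d. copies of $Z := \sigma_0^{-2} x_1(i) x_0(j)$ (resp. $W := x_1(i) u_0(k)$), centered at the correct value.

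Second, and this is the crux, I would bound the sub-exponential ($\psi_1$) norms of $Z$ and $W$. Since $x_1(i)$, $x_0(j)$, $u_0(k)$ are jointly Gaussian and mean-zero, each is sub-Gaussian with $\psi_2$-norm proportional to its standard deviation, and a product of two sub-Gaussians is sub-exponential with $\norm{XY}_{\psi_1} \lesssim \norm{X}_{\psi_2}\norm{Y}_{\psi_2}$. Everything therefore reduces to controlling $\mathrm{Var}(x_1(i)) = \sigma_0^2 \norm{a_i}_2^2 + \norm{b_i}_2^2 + \sigma^2$ uniformly in $i$, where $a_i,b_i$ denote the $i$-th rows of $A,B$. \emph{The main obstacle} is that for rows $i$ in the irrelevant block, $a_i$ has $\Theta(d)$ nonzero entries (the $A_3$ part), so a naive bound $\norm{a_i}_2^2 \le A_{\max}^2 d$ would reintroduce the ambient dimension $d$ and destroy the desired $s$-dependence. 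This is exactly where~\pref{assum: L1 stability on $A_3$} is used: for such a row the $A_3$-contribution obeys $\sum_{j} A_3(i,j)^2 \le \big(\sum_{j}|A_3(i,j)|\big)^2 \le \norm{A_3}_\infty^2 < 1$ via $\norm{\cdot}_2 \le \norm{\cdot}_1$, while the $A_{32}$-contribution involves at most $s_e \le s$ entries. Combined with the block structure (every row of $A$ has at most $s$ entries outside the $A_3$ block, and every row of $B$ has at most $d_u$ entries), this yields $\norm{a_i}_2^2 \le 1 + A_{\max}^2 s$ and $\norm{b_i}_2^2 \le B_{\max}^2 d_u$ for all $i$. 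Propagating these through $\sqrt{a+b+c}\le\sqrt{a}+\sqrt{b}+\sqrt{c}$ and the $\sigma_0^{-2}$ normalization in $Z$ (versus none in $W$) bounds both $\norm{Z}_{\psi_1}$ and $\norm{W}_{\psi_1}$ by $O(\sigma_{\mathrm{eff}})$ after a short case analysis on the relative sizes of $\sigma_0$ and $\sigma$, which is precisely what the factor $(\sigma/\sigma_0)\vee\sigma$ in $\sigma_{\mathrm{eff}}$ records.

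Finally, I would invoke Bernstein's inequality for averages of i.i.d. sub-exponential variables: for a fixed entry, $\Pr[|\widehat{A}(i,j) - A(i,j)| > \epsilon] \le 2\exp\big(-c N \min(\epsilon^2/\sigma_{\mathrm{eff}}^2,\ \epsilon/\sigma_{\mathrm{eff}})\big)$, and analogously for $\widehat{B}$. Choosing $N = O\big(\sigma_{\mathrm{eff}}^2 \log(d/\delta)/\epsilon^2\big)$ makes the quadratic branch dominate and drives each tail below $\delta/(d^2 + d\,d_u)$; a union bound over all $d^2 + d\,d_u$ entries then gives $\max_{i,j}|\widehat{A}(i,j)-A(i,j)| \le \epsilon$ and $\max_{i,k}|\widehat{B}(i,k)-B(i,k)| \le \epsilon$ simultaneously with probability at least $1-\delta$, where $\log(d^2 + d\,d_u) = O(\log d)$ is absorbed into the stated $\log(d/\delta)$ factor. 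This yields the claimed $(\epsilon,\delta)$ entrywise estimator.
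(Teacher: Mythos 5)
Your proof is correct, and while its high-level skeleton (unbiasedness, per-entry concentration with a dimension-free variance proxy, union bound over the $d^2+d\,d_u$ entries) matches the paper's, the concentration step takes a genuinely different route. The paper's proof (\pref{app: disc estimation via thresholding}) never treats $x_1(i)x_0(j)$ as a single random variable: it decomposes $\widehat{A}-A$ into $\frac{1}{N}\sum_n A\rbr{x_{0,n}x_{0,n}^{\top}/\sigma_0^2-I}$ plus $\frac{1}{N\sigma_0^2}\sum_n\xi_{B,n}x_{0,n}^{\top}$ (with $\xi_{B,n}=\xi_n+Bu_{0,n}$ folded into the noise), splits the first term into an off-diagonal piece $\inner{A(i,[d]/ j)}{z([d]/ j)}z(j)$ --- a product of \emph{independent} sub-Gaussians --- and a diagonal $\chi^2$-type piece $A(i,j)(z(j)^2-1)$, and bounds each via the cross-correlation lemmas (\pref{lem: application of cross correlation}, \pref{lem: bound on cross correlation}) built on Hermitian dilation and covariance concentration. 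You collapse all of this into one step: the product of two \emph{dependent} sub-Gaussians is sub-exponential with $\psi_1$-norm at most the product of the $\psi_2$-norms, and scalar Bernstein plus a union bound finishes. Your route is more elementary and avoids the independence engineering; the paper's route reuses the cross-correlation machinery it needs anyway for the semiparametric analysis. Both arguments hinge on the identical key use of \pref{assum: L1 stability on $A_3$} --- namely $\sum_j A_3(i,j)^2\le\rbr{\sum_j|A_3(i,j)|}^2=\norm{A_3}_\infty^2<1$, hence $\norm{A(i,\cdot)}_2^2\le 1+A_{\max}^2 s$ uniformly in $i$ --- which you correctly identify as the crux. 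Two harmless imprecisions: $x_1(i)$ need not be jointly Gaussian with $x_0(j)$ since $\xi$ is only sub-Gaussian (your argument only uses the $\psi_2$-norms of the factors, so nothing breaks), and the quadratic Bernstein branch requires $\epsilon\le\sigma_{\mathrm{eff}}$, which holds in the regime of interest because $\sigma_{\mathrm{eff}}\ge 1$.
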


Combining with \pref{thm: PC-LQ main result}, we obtain the first shaded row of \pref{tab:tabbounds}.

\subsection{Positive definite covariance matrix} \label{sec: PD covariance assumption}
For the second setting, we only assume that the covariance of $x_0$ is
PD. This, more general setting, is of importance since  the stationary measure of a policy may be quite complex, and, in particular, it may induce correlations between the irrelevant and relevant blocks (see~\pref{app: structure of covariance matrix} for further discussion on the need to handle general covariance matrices).  In this case, the least-squares estimator of $A$
yields a guarantee in the Frobenius norm, which can be translated into
an entrywise estimate. However, the sample complexity of this approach
scales as $\textrm{poly}(d)/\epsilon^2$, which is too large for our
purposes.  Instead of using classical least-squares, our approach is
based on a reduction to \emph{semiparametric least-squares}~ \citep{chernozhukov2016locally,chernozhukov2018plug,chernozhukov2018double,foster2019orthogonal}, which, as
we will see, results in a sample complexity of $1/\epsilon^2 +
d/\epsilon$ for entrywise estimation. Observe that here the ambient
dimension only appears in the lower order term.

The main idea is as follows: Suppose we wish to learn the $(i,j)$-th
entry of $A$ and assume we have $(x_1,x_0)$ sample pairs from the
model $x_1 = A x_0 + \xi$ where $\xi$ is a zero-mean $\sigma$
sub-gaussian vector. Then, for any $i \in [d]$,
\begin{align}
    x_{1}(i) = A(i,j) x_0(j) + \inner{A(i, [d]/ j)}{x_0([d]/j)} +\xi_i. \label{eq: element wise estimate to semi parametric}
\end{align}
If the first and second terms on the RHS were uncorrelated, then a
linear regression of $x_1(i)$ onto $x_0(j)$ would yield an unbiased
estimate of $A(i,j)$. Unfortunately, these two terms are correlated
under our assumptions, so least-squares may be biased. To remedy this,
we attempt to decorrelate the two terms using a two-stage regression
procedure. The first stage involves high dimensional regression
problems, but these errors ultimately only appear in the lower order terms.



Since our results for this problem may be of independent interest, we
next study a generalization of the model in~\eqref{eq: element wise
  estimate to semi parametric} and explain the estimator in detail. As
a corollary, we obtain a sample complexity guarantee for the entrywise
estimator for the PC-LQ.

\begin{algorithm}[t]
\caption{Semiparametric Least Squares}\label{alg: semi parametric regression main paper}
\begin{algorithmic}[1]
\State {\bf Require:} Dataset $\Dcal = \cbr{(x_{1,n} , x_{0,n})}_{n=1}^{2N}$ row and column indices $i,j\in [d]$
\State Reduction to semiparametric LS: $\Dcal_{SP} = \cbr{(y_n, z_{1,n} , z_{2,n})}_{n=1}^{N}$ where
\begin{align*}
    y_n = x_{1,n}(i), \ z_{1,n} = x_{0,n}(j),\ z_{2,n} = x_{0,n}([d] / j).
\end{align*}
\State Estimate cross correlation $\widehat{L} = \rbr{\sum_{n=1}^{N} z_{1,n}z_{2,n}^{\top}} \rbr{\sum_{n=1}^{N} z_{2,n}z_{2,n}^{\top}}^\dagger $
\State Estimate conditional output
$
    \widehat{c} = \rbr{\sum_{n=1}^{N} z_{2,n}z_{2,n}^{\top}}^\dagger \rbr{\sum_{n=1}^{N} y_n z_{2,n}} 
$
\State Set 
$
\widehat{A}(i,j) =\rbr{\sum_{n=N+1}^{2N} (z_{1,n} -  \widehat{L}z_{2,n})(z_{1,n} -  \widehat{L}z_{2,n})^{\top}}^\dagger \rbr{\sum_{n=N+1}^{2N} \rbr{y_n - \inner{\widehat{c}}{z_{2,n}}} (z_{1,n} -  \widehat{L}z_{2,n})}
$
\State {\bf Output:} $\widehat{A}(i,j)$
\end{algorithmic}
\end{algorithm}

\paragraph{Semiparametric least-squares.} 
As a generalization of~\eqref{eq: element wise estimate to semi parametric}, assume that $x\sim \Ncal(0, \Sigma)$ where $\lambda_{\min}(\Sigma)>0$ and $x\in \mathbb{R}^d$ and let
\begin{align}
    y = \inner{w_\star}{x_1} + \inner{e_\star}{x_2} +\xi \label{eq: semi parametric LS main paper}
\end{align}
where $w_\star,x_1\in \mathbb{R}^{d_w}, e_\star,x_2\in \mathbb{R}^{d_e}$, $x = [x_1, x_2]^{\top}$ and $\xi$ is $\sigma$ sub-Gaussian. 
By observing tuples sampled from this model $\cbr{y_n,x_{1,n}, x_{2,n}}_{n=1}^N$ we wish to estimate only $w_\star$. 
To do so, we first estimate $L_\star\in \mathbb{R}^{d_w \times d_w}$ and $c_\star\in \mathbb{R}^{d_e}$, that relate $x_2$ to the conditional expectation $\EE[x_1| x_2]$ and $\EE[y| x_2]$, with $N$ samples via standard least-squares. Due to the model Gaussian assumption, it holds that
\begin{align*}
    & \EE[x_1| x_2]  = L_\star x_2,
    &\EE[y| x_2] = c^T_\star x_2.
\end{align*}
When access to exact estimates of these quantities is given, we show in \pref{app: semi parametric ls linear model}, that the model~\eqref{eq: semi
  parametric LS main paper} can be `orthogonalized' and written as
\begin{align*}
    y = \inner{w_\star}{x_1- L_\star x_2} + \inner{c_\star}{x_2},
\end{align*}
where $\EE[(x_1- L_\star x_2)x_2^{\top}]=0$, so that the two terms on the right hand side are uncorrelated, unlike in the original model. 
Thus, given estimates $\widehat{L}_N,\widehat{c}_N$, we regress $y-\inner{\widehat{c}_N}{x_2}$ onto $(x_1 - \widehat{L}_N x_2)$ to get an estimate of $w_\star$. 
See \pref{alg: semi parametric regression main paper} for a description of the algorithm. 
In the next result, we show that this estimator has leading order error scaling with $d_w$ and only a lower order error term scaling with $d_e$. Furthermore, we get a minimal dependence in $\lambda_{\min}(\Sigma)$, with similar scaling as in usual OLS analysis~\citep{hsu2012random} (see \pref{app: finite sample analysis, semi parametric LS} for proof). 

\begin{restatable}[Semiparametric Least-Squares]{proposition}{theoremDiscViaNuisanceLS}\label{prop: semiparameteric least model sample complexity}
Let $\delta\in (0,e^{-1})$.
Consider model~\eqref{eq: semi parametric LS main paper} and assume that $\Sigma$ is PD. Denote $\sigma_c^2=  \norm{w_\star}_{\Sigma/\Sigma_{2}}^2 + \sigma^2$.
Then, if  ${N\geq O\rbr{\rbr{\sigma^2_c/\lambda_{\min}(\Sigma)}\vee 1} d d_w \log\rbr{\frac{d}{\delta}}}$, 
with probability $1-\delta$, the semiparametric LS estimator $\hat{w}$ of $w_\star$ satisfies
$$\norm{w_\star - \widehat{w}}_2\leq O\rbr{\frac{1}{\sqrt{\lambda_{\min}(\Sigma)}}\rbr{ \sqrt{\frac{\sigma^2 d_w\log\rbr{\frac{1}{\delta}}}{N}} + \frac{ \rbr{\sigma^2_c \vee \sigma_c} d d_w \log\rbr{\frac{d_w}{\delta}}}{N}}}.$$
\end{restatable}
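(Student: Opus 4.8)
The plan is to analyze \pref{alg: semi parametric regression main paper} by decomposing the final estimate $\widehat{w}$ into an ``oracle'' term, obtained as if we had access to the true nuisance parameters $L_\star$ and $c_\star$, plus an error term arising from the plug-in estimates $\widehat{L}_N,\widehat{c}_N$. Writing $r = x_1 - L_\star x_2$ for the orthogonalized regressor and $\widehat{r}_n = z_{1,n} - \widehat{L} z_{2,n}$ for its empirical analogue, the key algebraic fact (established in \pref{app: semi parametric ls linear model}) is that $\EE[r\, x_2^\top] = 0$, so in the orthogonalized model $y = \inner{w_\star}{r} + \inner{c_\star}{x_2}$ the two terms are uncorrelated. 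The estimator on line 5 is exactly a least-squares regression of $y_n - \inner{\widehat{c}}{z_{2,n}}$ onto $\widehat{r}_n$ over the fresh second-half sample. The sample splitting here is essential: it makes $\widehat{L},\widehat{c}$ independent of the $n=N+1,\dots,2N$ samples, so that conditioned on the first stage, the second stage is an ordinary OLS problem with fixed (but perturbed) regressors.

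First I would carry out the idealized analysis: if $\widehat{L}=L_\star$ and $\widehat{c}=c_\star$, then line 5 regresses $\inner{w_\star}{r_n} + \xi_n$ onto $r_n$, and since $r = x_1 - L_\star x_2$ is itself Gaussian with covariance $\Sigma/\Sigma_2$ (the Schur complement), a standard OLS bound --- e.g., the random-design analysis of \citet{hsu2012random} --- gives the leading term $\sqrt{\sigma^2 d_w \log(1/\delta)/N}$ with the $1/\sqrt{\lambda_{\min}(\Sigma)}$ factor coming from lower-bounding $\lambda_{\min}(\EE[rr^\top])$ by $\lambda_{\min}(\Sigma)$. The definition $\sigma_c^2 = \norm{w_\star}^2_{\Sigma/\Sigma_2} + \sigma^2$ is the conditional variance of $y$ given $x_2$, which will govern the size of the first-stage response. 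Next I would bound the first-stage errors: both $\widehat{L}-L_\star$ and $\widehat{c}-c_\star$ are outputs of standard least-squares over $N$ samples with $d_e$-dimensional design $z_2$, so each incurs a Frobenius/$\ell_2$ error of order $\sqrt{d_e \cdot (\text{dim})/N}$ up to $\lambda_{\min}(\Sigma)$ and noise-level factors; this is where the $d \approx d_e$ dependence enters.

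The crux is propagating these first-stage errors through line 5 and showing they land only in the \emph{second-order} term $\frac{(\sigma_c^2 \vee \sigma_c)\, d\, d_w \log(d_w/\delta)}{N}$. The mechanism is orthogonality/Neyman-orthogonality: because $\EE[r\,x_2^\top]=0$, the cross-terms coupling the first-stage error to $w_\star$ vanish in expectation to first order, so the bias contributed by $(\widehat{L}-L_\star, \widehat{c}-c_\star)$ is a \emph{product} of the two small first-stage errors rather than either one alone. Concretely, I would expand the numerator and denominator of line 5, substituting $\widehat{r}_n = r_n - (\widehat{L}-L_\star)z_{2,n}$ and $y_n - \inner{\widehat{c}}{z_{2,n}} = \inner{w_\star}{r_n} + \xi_n - \inner{\widehat{c}-c_\star}{z_{2,n}}$, and track that the surviving error terms are of the form $\norm{\widehat{L}-L_\star}\cdot\norm{\widehat{c}-c_\star}$ and $\norm{\widehat{L}-L_\star}^2\norm{w_\star}$, each of which is $\tilde{O}(d d_w / N)$ after multiplying two $\tilde{O}(\sqrt{d/N})$-type factors. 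The main obstacle I anticipate is controlling the perturbed Gram matrix $\sum_n \widehat{r}_n \widehat{r}_n^\top$ --- specifically showing its minimum eigenvalue stays bounded below by $\Omega(N\lambda_{\min}(\Sigma))$ once $N \gtrsim d\, d_w \log(d/\delta)$, which is exactly the stated sample-size threshold. This requires a uniform (over the error directions) concentration argument, likely via an $\epsilon$-net / matrix-concentration bound on $\sum_n z_{2,n} z_{2,n}^\top$ and $\sum_n r_n r_n^\top$, combined with the independence afforded by sample splitting so that $\widehat{L}$ can be treated as a fixed matrix when conditioning on the first stage. Once the Gram matrix is well-conditioned and the numerator error is bounded as above, dividing through yields the claimed two-term bound.
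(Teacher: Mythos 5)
Your strategy is essentially the paper's: orthogonalize the model using $\EE[(x_1-L_\star x_2)x_2^{\top}]=0$, exploit sample splitting so that the second-stage regression is an OLS with conditionally fixed nuisance estimates, bound the oracle noise term to get the leading $\sqrt{\sigma^2 d_w\log(1/\delta)/N}$ rate, and argue via orthogonality that the first-stage errors enter only at order $dd_w/N$; this is precisely the three-term decomposition in~\eqref{eq: central term central theorem nuisnace full analysis} and \pref{lem: ls nuisance 3 terms, term 1}--\pref{lem: ls nuisance 3 terms, term 3}, followed by control of the perturbed Gram matrix (\pref{lem: lower bound on the perturbed least square}).

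Two details in your sketch need tightening to actually reach the stated bound. First, the cross terms such as $\frac{1}{N}\sum_n (x_{1,n}-L_\star x_{2,n})\inner{(c_\star-\widehat c)+(\widehat L-L_\star)^{\top}w_\star}{x_{2,n}}$ do not reduce to products of two first-stage errors: orthogonality only kills their conditional expectation, and you must separately bound their fluctuation, which is (first-stage error)$\times$(empirical cross-correlation deviation) $=\tilde O\rbr{\sqrt{dd_w/N}\cdot\sqrt{d/N}}$. This is of the claimed order, but it is a genuinely distinct term from $\norm{\widehat L-L_\star}\cdot\norm{\widehat c-c_\star}$ and requires its own concentration argument (the paper's \pref{lem: ls nuisance 3 terms, term 2} versus \pref{lem: ls nuisance 3 terms, term 3}). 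Second, and more consequentially, bounding the first-stage errors in plain $\ell_2$/Frobenius norm ``up to $\lambda_{\min}(\Sigma)$ factors'' and then propagating them would accumulate condition-number factors of $\Sigma$ and yield a worse dependence than the single $1/\sqrt{\lambda_{\min}(\Sigma)}$ in the statement. The paper avoids this by bounding the first-stage errors in covariance-weighted, eigenvalue-free norms --- $\norm{\Sigma_{2}^{1/2}(\widehat c-c_\star)}_2$ and $\norm{(\Sigma/\Sigma_2)^{-1/2}(\widehat L-L_\star)\Sigma_2^{1/2}}_{\op}$ (\pref{lem: sample complexity c star}, \pref{lem: sample complexity L star}) --- keeping every intermediate quantity whitened, and converting to the Euclidean norm only at the final step via $\lambda_{\min}(\Sigma/\Sigma_2)\geq\lambda_{\min}(\Sigma)$. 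With those two adjustments your argument goes through.
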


Returning to the PC-LQ setting, we obtain an entrywise estimator for
$A$ by applying the semiparametric LS approach on each pair $(i,j) \in
[d]^2$. To estimate $B$, since we can sample $u_0$ with a diagonal
covariance, we can apply the results for the diagonal covariance
case. We summarize the sample complexity for entrywise estimation in
the next corollary (see \pref{app: disc estimation via ls with nuisance parameter} for proof).


\begin{restatable}[Element-wise Estimate, PD Covariance]{corollary}{corrDiscViaNuisanceLS}\label{corr: corollary: semi parametric estimate for PD covariance}
Assume $x_0\sim \Ncal(0,\Sigma)$ and that ${\lambda_{\min}(\Sigma)>0}$.  Denote $\sigma_c^2=  A_{\max}^2 \lambda_{\max}(\Sigma)+\sigma^2$.
Then, if $N\geq O\rbr{\rbr{\sigma^2_c/ \lambda_{\min}(\Sigma) \vee 1}d\log\rbr{\frac{d}{\delta}}}$,
and  \\
${N = O\rbr{\frac{\sigma^2\log\rbr{\frac{d}{\delta}}}{\epsilon^2\lambda_{\min}(\Sigma)} + \frac{d\rbr{\sigma_c^2\vee \sigma_c}\log\rbr{\frac{d}{\delta}}}{\epsilon \sqrt{\lambda_{\min}(\Sigma)}}}}$,
then the semiparametric LS yields an entrywise estimate of $A$ with parameters $(\epsilon,\delta)$. 
\end{restatable}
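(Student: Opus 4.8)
The plan is to reduce the estimation of each entry $A(i,j)$ to an instance of the semiparametric least-squares model~\eqref{eq: semi parametric LS main paper} with a \emph{scalar} target ($d_w=1$), invoke \pref{prop: semiparameteric least model sample complexity} once per entry, and then take a union bound over all $d^2$ entries. First I would set up the reduction. Collecting trajectories with $u_0=0$ yields pairs $(x_1,x_0)$ obeying $x_1 = A x_0 + \xi$, where $\xi$ is a zero-mean, $\sigma$-sub-Gaussian disturbance independent of $x_0 \sim \Ncal(0,\Sigma)$. Fixing a row $i$ and column $j$ and reading off the $i$-th coordinate exactly as in~\eqref{eq: element wise estimate to semi parametric}, I identify $y = x_1(i)$, the scalar regressor $x_0(j)\in\mathbb{R}$ (so $d_w=1$), the nuisance $x_0([d]/j)\in\mathbb{R}^{d-1}$ (so $d_e=d-1$), target $w_\star = A(i,j)$, nuisance coefficient $e_\star = A(i,[d]/j)$, and noise $\xi_i$. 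Because $x_0$ is Gaussian, the conditional expectations $\EE[x_0(j)\mid x_0([d]/j)]$ and $\EE[y\mid x_0([d]/j)]$ are exactly linear, so the hypotheses of \pref{prop: semiparameteric least model sample complexity} hold, and \pref{alg: semi parametric regression main paper} is precisely the estimator it analyzes.

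Next I would apply the proposition with $d_w=1$ and confidence parameter $\delta/d^2$. This gives, for the fixed $(i,j)$ and with probability $1-\delta/d^2$,
$$|\widehat{A}(i,j) - A(i,j)| \le O\!\left(\frac{1}{\sqrt{\lambda_{\min}(\Sigma)}}\left(\sqrt{\frac{\sigma^2 \log(d/\delta)}{N}} + \frac{(\sigma_c^2 \vee \sigma_c)\, d\, \log(d/\delta)}{N}\right)\right),$$
where the $\log(1/\delta)$ and $\log(d_w/\delta)$ factors collapse to $\log(d^2/\delta) = O(\log(d/\delta))$ after rescaling $\delta$, and the precondition $N \ge O((\sigma_c^2/\lambda_{\min}(\Sigma) \vee 1)\, d\, d_w \log(d/\delta))$ becomes the corollary's first stated requirement at $d_w=1$. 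The one discrepancy to resolve is the definition of $\sigma_c$: the proposition uses $\sigma_c^2 = \norm{w_\star}_{\Sigma/\Sigma_2}^2 + \sigma^2$, whereas the corollary uses $A_{\max}^2\lambda_{\max}(\Sigma) + \sigma^2$. Here $\Sigma/\Sigma_2$ is the Schur complement, i.e., the conditional variance of $x_0(j)$ given $x_0([d]/j)$, a scalar bounded above by the marginal variance $\Sigma(j,j) \le \lambda_{\max}(\Sigma)$; combined with $|A(i,j)| \le A_{\max}$ this gives $\norm{w_\star}_{\Sigma/\Sigma_2}^2 \le A_{\max}^2 \lambda_{\max}(\Sigma)$, so the corollary's $\sigma_c$ dominates and the displayed bound remains valid when stated with it.

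Finally I would force each error term below $\epsilon/2$: the first term requires $N \ge O(\sigma^2\log(d/\delta)/(\epsilon^2\lambda_{\min}(\Sigma)))$ and the second requires $N \ge O(d(\sigma_c^2\vee\sigma_c)\log(d/\delta)/(\epsilon\sqrt{\lambda_{\min}(\Sigma)}))$, which together yield the displayed sample size. A union bound over the $d^2$ choices of $(i,j)$ — which needs no independence and so permits reusing a single dataset across all entries — then guarantees $\max_{i,j}|\widehat{A}(i,j) - A(i,j)| \le \epsilon$ with probability at least $1-\delta$, i.e., an $(\epsilon,\delta)$ entrywise estimator of $A$. The main thing to get right is the $\sigma_c$ comparison through the Schur complement and the careful bookkeeping of the $\log(d/\delta)$ and $\lambda_{\min}(\Sigma)$ factors under the union bound; everything else is a direct instantiation of \pref{prop: semiparameteric least model sample complexity}.
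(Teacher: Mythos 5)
Your proposal is correct and follows essentially the same route as the paper: fix $(i,j)$, cast the row equation~\eqref{eq: element wise estimate to semi parametric} as the semiparametric model with $d_w=1$, $d_e=d-1$, apply \pref{prop: semiparameteric least model sample complexity}, and union bound over all $d^2$ entries. You additionally spell out the conversion between the two definitions of $\sigma_c$ via the scalar Schur complement being at most $\lambda_{\max}(\Sigma)$, a step the paper compresses into the remark $|w_\star|\leq A_{\max}$; this is a faithful (and slightly more careful) rendering of the same argument.
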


Combining with \pref{thm: PC-LQ main result}, we obtain the second shaded row of \pref{tab:tabbounds}.

\section{Experiments}\label{sec: experiments}
We present a proof-of-concept empirical study, to demonstrate the end-to-end statistical advantages of leveraging sparsity in the LQR of a PC-LQ. We generate synthetic systems with marginally stable controllable blocks; the task is to learn a stabilizing controller $K$ (such that $\rho(A + BK) < 1$) from finite samples, in the presence of many irrelevant state coordinates (letting $d$ increase, while holding $s$ and $d_u$ constant). We compare \pref{alg: control pc lqr samples} with the certainty-equivalent controller obtained from the ordinary least-squares (OLS) estimator for the system's dynamics.

Synthetic PC-LQ problems were generated with i.i.d. standard Gaussian entries (for all $A_1, A_2, A_3, A_{12}, A_{32}, B_1$); the diagonal blocks were normalized by their top singular values so that $\rho(A_1) = 1$, and $\rho(A_2) = \rho(A_3) = 0.9$. We computed $\bar A$ from the minimum-norm $N$-sample OLS estimator, as well as the soft-thresholded semiparametric least-squares estimator from \pref{alg: control pc lqr samples} (with $\epsilon = 0.1$), and obtained certainty-equivalent controllers $\bar K$ by solving the Riccati equation with $\bar L := (\bar A, B)$. Over 100 trials in each setting, we recorded the fraction of times $\bar K$ stabilized the system ($\rho(A + B\bar K) < 1$, and $J(\bar K) \leq 1.1 \cdot J(K^*)$).

\pref{fig:synthetic-plot} summarizes our findings: keeping the relevant dimensions fixed ($s_c = s_e = 5, d_u = 1$) and allowing $d$ to grow, the sample complexity of stabilizing the system exhibits a far milder dependence on the ambient dimension $d$ when using our estimator. A complete description of the experimental protocol is given in \pref{app: appendix-experiments}.

\begin{figure}
    \centering
    \includegraphics[width=1.0\linewidth]{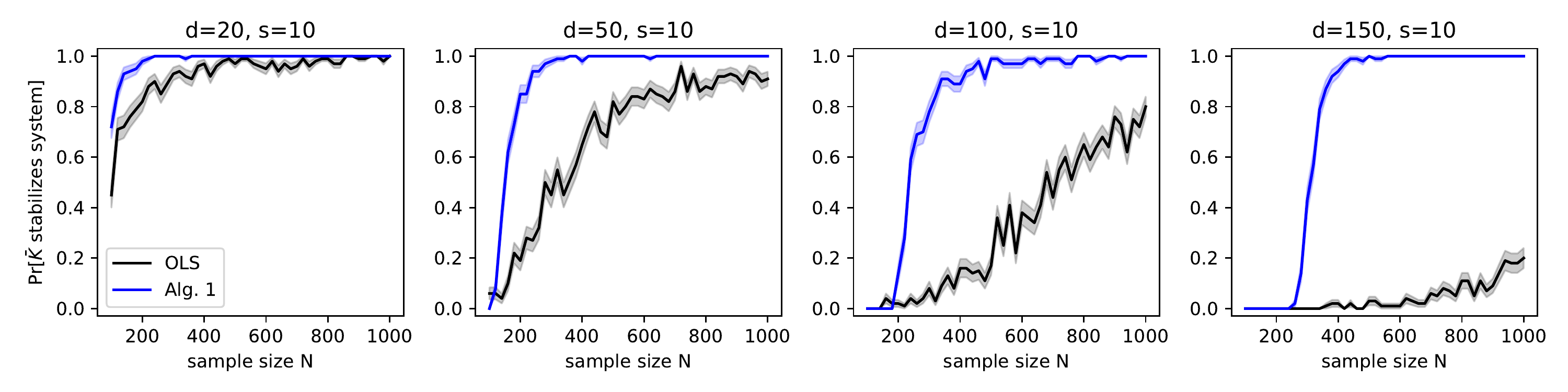}
    \vspace{-0.25cm}
    \caption{
     Empirical comparison of \pref{alg: control pc lqr samples} with OLS for stabilizing a marginally stable PC-LQ. As the number of irrelevant features increases, the sample complexity of the sparsity-leveraging estimator grows much more slowly. Success frequencies (with standard deviations from the normal approximation) are measured over 100 trials.\vspace{-0.35cm}}
    \label{fig:synthetic-plot}
\end{figure}

\section{Related Work}

\paragraph{Partial controllability in control theory.} The notion of controllability and partial controllability has been well studied from many different aspects in both  classical and modern control theory \quad \cite{kalman1963mathematical,lin1974structural,glover1976characterization,jurdjevic1978controllability,zhou1996robust,bashirov2007partial,sontag2013mathematical}, as well as, the relation between controllability and invariant subspaces~\cite{klamka1963controllability,basile1992controlled}.
In~\pref{sec: LQ with non essential noise}, we  characterize which parts of a PC-LQ  are not needed for optimal control. To the best of our knowledge, such characterization  does not exist in previous literature. One may interpret the results of~\pref{sec: LQ with non essential noise} as an extension of Kalman's canonical decomposition. That is, we further decompose the uncontrollable and observable system (see~\citet{kalman1963mathematical}, Page 165) into relevant and irrelevant parts for optimal control. 


\paragraph{Structural results in LQ.} Recently, there has been a surge of interest in the learnability of LQ~\cite{abbasi2011regret,dean2019sample,sarkar2019near,cohen2019learning,mania2019certainty,simchowitz2020naive,cassel2020logarithmic,tsiamis2021linear}. However, learning in the presence of structural properties of an LQ has been, to large extent, unexplored. Closely related to our work is the problem studied in~\citep{fattahi2018sample,fattahi2019learning}. There, the authors considered an LQ problem in which the dynamics itself has a sparse structure.  Specifically, the dynamics was assumed to have some sparse block structure such that all elements in each block are simultaneously zero or non-zero. We do not put any such restriction on a PC-LQ. Moreover, in our case, the transition matrix $A$ need not be a sparse matrix, and may have $\Omega(d^2)$ non-zero elements. The sparsity utilized in our work is \emph{sparsity of the optimal controller} and not of the dynamics itself. We also comment that in~\citep{fattahi2018sample,fattahi2019learning} additional assumptions were made,  which are not satisfied in our setting. First, the authors assume a mutual-incoherence condition on the covariance matrix. Additionally, it is assumed that $A(i,j),B(i,j)\geq \gamma>0$, i.e., that there is a minimal value for the entries of the dynamics. These assumptions are crucial for identification of the non-zero entries; assumptions we do not make in this work  (see~\pref{app: structure of covariance matrix} for further discussion on the structure of the covariance matrix in our setting). That is, we recover a near optimal policy without the need to recover the true block structure.  

Another related work is the work of~\cite{wang2020episodic}, where the authors assumed the dynamics is of low rank and fully controllable. We do not make such an assumption and allow for uncontrollable part to affect the controllable part. Lastly, in~\cite{sun2020finite}, the authors analyzed system identification via low-rank Hankel matrix estimation. Observe that Hankel based techniques only enable the recovery of the controllable parts of the system, as they are based on a function of $A^n B$. However, to optimally control a stable system, knowledge of the relevant uncontrollable process is also needed (see~\pref{ex: non controllable part is necessary}).


\section{Summary and Future Work}

In this work, we studied structural and learnability aspects of the PC-LQ. We characterized an invariance property of the LQR of a PC-LQ. This revealed that the optimal controller of such systems is, in fact, a low-dimensional object. Then, given an entrywise estimator,  we showed  that the sample complexity of learning an axis-aligned PC-LQ has only a mild dependence on the ambient dimension, scaling primarily with the dimensionality/sparsity of the optimal controller.

The results presented in this work opens several interesting future research avenues. First, we believe it would be interesting to study additional invariance properties of optimal policies of other control and RL problems. As stressed in this work, invariances of the optimal controller can yield statistical improvements for learning in such models. More broadly, is there a general way to characterize such invariances? Second, in this work, we assumed the PC-LQ model is sparse, or, axis-aligned. A natural question would be to study the learnability of such a model when the system is not axis-aligned, and understand the nature of possible sample complexity improvements in such systems? Lastly, extending our results to a single trajectory setting is of interest, and may require developing new tools for semiparametric least-squares analysis.

\section*{Acknowledgments}
YE is partially supported by the Viterbi scholarship, Technion.

\bibliographystyle{apalike}
\bibliography{citations}
 \appendix
\newpage

\section{Summary of Sample Complexity Results}\label{app: summary}

\begin{figure}[t]
\begin{center}
    \begin{minipage}{0.4\textwidth}
\centering
\begin{algorithm}[H]
\caption{
 Oracle via second-moments}
\label{alg: dynamics for diagonal covariance}
\begin{algorithmic}[1]
\State {\bf Require:} $N>0$, $\sigma_0>0$ 
\State Sample $\Dcal = \cbr{(x_{0,n},x_{1,n})}_{n=1}^{N}$
\State Set $\hat{A} = \frac{1}{N\sigma_0^2} \sum_{n=1}^N x_{1,n}x_{0,n}^{\top} $
\State {\bf Output: $\hat{A}$} 
\vspace{0.28cm}
\end{algorithmic}
\end{algorithm}
\end{minipage}
\hspace{0.2cm}
\begin{minipage}{0.52\textwidth}
\centering
\begin{algorithm}[H]
\caption{Oracle via Semiparametric LS}
\label{alg: dynamics for PD covariance}
\begin{algorithmic}[1]
\State {\bf Require:} $N>0$ 
\State Sample $\Dcal =\cbr{(x_{0,n},x_{1,n})}_{n=1}^{2N}$ 
\ForAll{$i,j \in [d]$}
    \State Estimate $\widehat{A}(i,j)$ via semiparametric LS 
    \State for indices $(i,j)$, \pref{alg: semi parametric regression}
\EndFor
\State {\bf Output:}  $\widehat{A}$
\end{algorithmic}
\end{algorithm}
\end{minipage}
\end{center}
\end{figure}

In \pref{sec: learning sparse lqr}, we study the performance of \pref{alg: control pc lqr samples}, which assumes an oracle access to an $(\sqrt{\epsilon/2s(s+d_u))},\delta)$ element-wise estimate of the dynamics $(A,B)$. Given such access, \pref{thm: PC-LQ main result} establishes a near-optimal performance guarantee of \pref{alg: control pc lqr samples}.

Then, in \pref{sec: estimation oracle sample complexity} we study the sample complexity of the assumed $(\epsilon,\delta)$ element-wise estimate for two settings: when $x_0$ has a diagonal covariance (\pref{sec: diagonal matrix assumption}) and when $x_0$ has a PD covariance (\pref{sec: PD covariance assumption}). 

By combining these together, we get, as a corollary, the sample complexity of the two algorithms considered in this work. That is, when \pref{alg: control pc lqr samples} is instantiated with \pref{alg: dynamics for diagonal covariance} or with \pref{alg: dynamics for PD covariance}. We now formally give these corollaries, which \pref{tab:tabbounds} summarizes, for completeness.

\begin{corollary}[Learning PC-LQR with second-moment Estimate]
Let the assumptions of \pref{prop: elementwise convergence of second-moment based estimation} and \pref{thm: PC-LQ main result} hold. Then, given 
\begin{align*}
    N = O\rbr{\frac{\log\rbr{\frac{d}{\delta}}\rbr{s^2+d_u s}\sigma^2_{\mathrm{eff}}\vee 1}{\epsilon}}
\end{align*}
samples, the optimal policy $\bar{K}$ of the returned model of \pref{alg: control pc lqr samples} is at most $\epsilon$ suboptimal, where
\begin{align*}
    J_\star(\bar{K}) \leq J_\star + O\rbr{\norm{P_{\star,1:2}}^8 \epsilon}.
\end{align*}
\end{corollary}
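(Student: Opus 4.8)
The plan is to obtain the corollary as a direct composition of the finite-sample guarantee for the second-moment estimator (\pref{prop: elementwise convergence of second-moment based estimation}) with the end-to-end control guarantee (\pref{thm: PC-LQ main result}); no new analysis is required beyond carefully threading the accuracy parameters through the two results. First I would fix the target suboptimality level $\epsilon$ appearing in the corollary and read off from \pref{thm: PC-LQ main result} exactly what entrywise accuracy it demands in order to certify it: the theorem requires an $(\epsilon',\delta)$ entrywise estimator of $(A,B)$ with $\epsilon' = \sqrt{\epsilon/(2s(s+d_u))}$, and under this accuracy—together with \pref{assum: L1 stability on $A_3$} and the regularity condition $\epsilon < 1/\norm{P_{\star,1:2}}_{\op}^{10}$—the policy $\bar K$ returned by \pref{alg: control pc lqr samples} satisfies $J_\star(\bar K) \le J_\star + O(\norm{P_{\star,1:2}}_{\op}^8 \epsilon)$. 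This is precisely the conclusion stated in the corollary, so the only remaining task is to quantify how many samples the second-moment oracle needs in order to meet the accuracy $\epsilon'$.

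Next I would invoke \pref{prop: elementwise convergence of second-moment based estimation}, whose hypotheses (the isotropic Gaussian initial state $x_0 \sim \Ncal(0,\sigma_0 I_d)$ and $L_\infty$-stability of $A_3$) are exactly the standing assumptions here. The proposition guarantees that the estimator~\eqref{eq:second_moment_estimator} is an $(\epsilon',\delta)$ entrywise estimator once $N = O(\log(d/\delta)\,\sigma_{\mathrm{eff}}^2/\epsilon'^2)$. Substituting the required accuracy gives $1/\epsilon'^2 = 2s(s+d_u)/\epsilon = 2(s^2 + d_u s)/\epsilon$, whence $N = O\!\rbr{\log(d/\delta)\,(s^2 + d_u s)\,\sigma_{\mathrm{eff}}^2/\epsilon}$; the $\vee 1$ in the stated bound merely ensures $N \ge 1$ in the degenerate regime where the numerator is small. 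At this sample size the entrywise estimator succeeds with probability at least $1-\delta$.

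Finally I would chain the two high-probability statements: on the event (of probability at least $1-\delta$) that the second-moment oracle delivers the required entrywise accuracy $\epsilon'$, \pref{thm: PC-LQ main result} applies verbatim and yields the claimed bound on $J_\star(\bar K)$. Because both results are stated with the same failure probability $\delta$, and the theorem's guarantee is deterministic once a successful estimator is supplied, no union-bound inflation of $\delta$ is incurred. I do not anticipate a genuine mathematical obstacle—the content lives entirely within the two cited results—so the only point requiring care is purely bookkeeping: confirming that the accuracy parameter $\epsilon'$ fed into the proposition matches the one demanded by the theorem, and that the threshold condition $\epsilon < 1/\norm{P_{\star,1:2}}_{\op}^{10}$ is inherited as a hypothesis of the corollary rather than something to be re-derived here.
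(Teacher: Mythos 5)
Your proposal is correct and follows exactly the paper's own argument: instantiate the entrywise accuracy $\epsilon' = \sqrt{\epsilon/(2s(s+d_u))}$ demanded by \pref{thm: PC-LQ main result}, plug it into the sample complexity of \pref{prop: elementwise convergence of second-moment based estimation} to obtain $N = O\rbr{\log(d/\delta)(s^2+d_u s)\sigma_{\mathrm{eff}}^2/\epsilon}$, and then apply the theorem on the success event of the estimator. No gaps; the bookkeeping on $\epsilon'$ and $\delta$ is exactly as the paper does it.
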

\begin{proof}
By \pref{prop: elementwise convergence of second-moment based estimation}, given such amount of samples, the second-moment estimate is an $\rbr{O(\sqrt{\epsilon/s(s+d_u)}),\delta}$ element-wise estimate of the dynamics matrix. Applying \pref{thm: PC-LQ main result} implies the result.
\end{proof}

\begin{corollary}[Learning PC-LQR with semiparametric Least Square Estimate]
Let the assumptions of \pref{prop: semiparameteric least model sample complexity} and \pref{thm: PC-LQ main result} hold. Then, given 
\begin{align*}
    N =  O\rbr{\frac{\sigma^2(s^2+sd_u)\log\rbr{\frac{d}{\delta}}}{\epsilon\lambda_{\min}(\Sigma)} + \frac{d\rbr{A_{\max}^2 \lambda_{\max}(\Sigma)+\sigma^2}\sqrt{s^2+sd_u}\log\rbr{\frac{d}{\delta}}}{ \sqrt{\epsilon \lambda_{\min}(\Sigma)}}}
\end{align*}
samples, the optimal policy $\bar{K}$ of the returned model of \pref{alg: control pc lqr samples} is at most $\epsilon$ suboptimal, where
\begin{align*}
    J_\star(\bar{K}) \leq J_\star + O\rbr{\norm{P_{\star,1:2}}^8 \epsilon}.
\end{align*}
\end{corollary}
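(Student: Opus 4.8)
The plan is to mirror the proof of the preceding corollary (the second-moment case): I would chain the element-wise estimation guarantee of \pref{corr: corollary: semi parametric estimate for PD covariance} together with the control guarantee of \pref{thm: PC-LQ main result}, choosing the entrywise accuracy so that it exactly meets the precision the latter requires. Concretely, \pref{thm: PC-LQ main result} demands an entrywise estimator of $(A,B)$ at precision $\epsilon' := \sqrt{\epsilon/\rbr{2s(s+d_u)}}$, so I would instantiate \pref{corr: corollary: semi parametric estimate for PD covariance} at accuracy $\epsilon'$. That corollary certifies the semiparametric-LS estimator is an $(\epsilon',\delta)$ entrywise estimator of $A$ once $N \geq O\rbr{\frac{\sigma^2 \log(d/\delta)}{(\epsilon')^2 \lambda_{\min}(\Sigma)} + \frac{d(\sigma_c^2 \vee \sigma_c)\log(d/\delta)}{\epsilon' \sqrt{\lambda_{\min}(\Sigma)}}}$, with $\sigma_c^2 = A_{\max}^2\lambda_{\max}(\Sigma)+\sigma^2$. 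Since $B$ is estimated from inputs $u_0$ having diagonal covariance, its entrywise guarantee follows from the diagonal-covariance analysis (\pref{prop: elementwise convergence of second-moment based estimation}) and contributes only a $1/(\epsilon')^2$-type term carrying no factor of $d$, hence is dominated by the $A$ term.

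Next I would substitute $\epsilon' = \sqrt{\epsilon/\rbr{2s(s+d_u)}}$ and simplify. Using $(\epsilon')^2 = \epsilon/\rbr{2s(s+d_u)}$ converts the leading term into $O\rbr{\frac{\sigma^2(s^2+sd_u)\log(d/\delta)}{\epsilon\lambda_{\min}(\Sigma)}}$, while $1/\epsilon' = \sqrt{2(s^2+sd_u)/\epsilon}$ converts the lower-order term into $O\rbr{\frac{d(\sigma_c^2\vee\sigma_c)\sqrt{s^2+sd_u}\log(d/\delta)}{\sqrt{\epsilon\lambda_{\min}(\Sigma)}}}$, which matches the displayed $N$ (the $\sigma_c^2\vee\sigma_c$ versus $\sigma_c^2$ distinction is immaterial once $\sigma_c \geq 1$). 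With an $(\epsilon',\delta)$ entrywise estimator of $(A,B)$ in hand, I would invoke \pref{thm: PC-LQ main result}---applicable under \pref{assum: L1 stability on $A_3$} and the hypothesis $\epsilon < 1/\norm{P_{\star,1:2}}_{\op}^{10}$---to conclude $J_\star(\bar K) \leq J_\star + O\rbr{\norm{P_{\star,1:2}}^8 \epsilon}$.

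The argument presents no genuine obstacle; the only care needed is bookkeeping. In particular, I must check that the $\epsilon$-free sample floor $N \geq O\rbr{\rbr{\sigma_c^2/\lambda_{\min}(\Sigma)\vee 1}\,d\log(d/\delta)}$ also appearing in \pref{corr: corollary: semi parametric estimate for PD covariance}---needed so that the first-stage high-dimensional regressions concentrate---is subsumed by the displayed $N$. This holds because the hypothesis $\epsilon < 1/\norm{P_{\star,1:2}}_{\op}^{10}$ makes $1/\epsilon$ and $1/\sqrt{\epsilon}$ large, so the $1/\epsilon$ and $1/\sqrt{\epsilon}$ terms dominate the constant-in-$\epsilon$ floor and absorb it into the $O(\cdot)$. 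Everything else is a direct substitution of parameters, exactly paralleling the second-moment corollary.
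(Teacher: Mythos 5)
Your proposal is correct and follows essentially the same route as the paper: the paper's own proof is a two-line argument that instantiates the semiparametric-LS entrywise guarantee at accuracy $O(\sqrt{\epsilon/(s(s+d_u))})$ and then invokes \pref{thm: PC-LQ main result}, which is exactly your chain of \pref{corr: corollary: semi parametric estimate for PD covariance} into the main theorem with $\epsilon'=\sqrt{\epsilon/(2s(s+d_u))}$. Your additional bookkeeping (the $B$-estimate via the diagonal-covariance analysis and the check that the $\epsilon$-free sample floor is absorbed) is more careful than what the paper writes, not a departure from it.
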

\begin{proof}
By \pref{prop: semiparameteric least model sample complexity}, given such amount of samples, the semiparametric LS estimate is an $\rbr{O(\sqrt{\epsilon/s(s+d_u)}),\delta}$ element-wise estimate of the dynamics matrix. Applying \pref{thm: PC-LQ main result} implies the result.
\end{proof}

\section{Comment on the Structure of Covariance Matrix}\label{app: structure of covariance matrix}
In~\pref{sec: PD covariance assumption} we devised an entrywise estimator given a general covariance matrix. We further elaborate why this is needed for general PC-LQR problems. We consider two cases, (1) that the sampling policy does not depend on the state variables of the third block, and (2) when they may depend on the state variables of the third block.

\paragraph{Case 1: sampling policy does not depend on the state variables of the third block.} In this case, assuming the noise is Gaussian with a diagonal covariance matrix, the covariance matrix has the following block structure
\begin{align*}
    \Sigma = 
    \sbr{
    \begin{matrix}
      \Sigma_{11} & \Sigma_{12} &0\\
      \Sigma_{12}^{\top}& \Sigma_{22} &\Sigma_{23}\\
      0 & \Sigma_{23}^{\top} & \Sigma_{33} 
    \end{matrix}
    }.
\end{align*}
That is, there is a coupling between the state-variables on the second and third block.

\paragraph{Case 2. sampling policy depends on the state variables of the third block.} In this case, the covariance matrix may take an arbitrary shape. That is, if the sampling policy is a function of the state variables of the third block, the covariance matrix might have non-zero off-diagonal in the PC-LQR model. Indeed, in lack of prior information on the identity of the non-controllable and non-relevant state variables, the sampling policy may depend on these state variables.

\newpage
\section{Counterexample with a General Uncontrollable System} \label{app: counterexample for general uncontrollable system}

Consider an LQR model $L_\rho = (A_{\rho}, B, Q)$ where $|\rho|<1$
\begin{align}
    A_{\rho}
    =
    \sbr{
    \begin{matrix}
     1 & 1  \\
     0 & \rho\\
    \end{matrix}
    }, \quad 
    B
    =  
    \sbr{
    \begin{matrix}
     1 \\
     0 
    \end{matrix}
    }, 
    Q
    =
    \sbr{
    \begin{matrix}
     1 & 0 \\
     0 & 0\\
    \end{matrix}
    }. \label{eq: stable system 2d with uncontrollable coordinates}
\end{align}
See that by \pref{thm: invariance of optimal policy} the optimal policy of this LQR and the LQR with a modified cost $Q = I_d$ is invariant. See that only the first coordinate of this system is controllable. For simplicity of analysis, we consider $L_\rho = (A_{\rho}, B, Q)$.

Let $P_{\rho,\star}$ be the solution of the Riccati equation. Then, the optimal policy is then given by
\begin{align*}
    K_{x,\star} = (R+ B^{\top} P_{\rho,\star} B)^{-1}(B^{\top} P_{\rho,\star} A).
\end{align*}
In \pref{app: closed form solution riccati} we solve the Riccati equation, in closed form, and show that
\begin{align*}
    &P_{\rho,1} = \frac{1 + \sqrt{5}}{2}, \textit{ and } P_{\rho,12} =  \frac{P_1}{P_1^2 -\rho}.
\end{align*}
This implies that the optimal policy takes the following form,
\begin{align*}
     K_{\rho,\star} = \frac{1}{1+ P_1}\sbr{\begin{matrix}
       P_1 & P_1 + \rho P_{\rho,12}
     \end{matrix}} = \frac{P_1}{1+ P_1}\sbr{\begin{matrix}
       1 & \frac{P_1^2}{P_1^2-\rho }
     \end{matrix}}.
\end{align*}
Observe that since $P_1>1$ and $|\rho|<1$ this object is well defined. 

The above implies that for $\rho_1\neq \rho_2$ it holds that
\begin{align}
     K_{\rho_1,\star} \neq  K_{\rho_2,\star}. \label{eq: two d system non controllable information is relevant}
\end{align}
Hence, the optimal policy is a function of $\rho$.

\paragraph{Extending the Construction to Arbitrary Dimension}
To extend the argument to arbitrary dimension  consider the $d$ dimensional deterministic LQR problem $L_\rho = (A_{\rho},B,Q)$ 
\begin{align*}
    A_{\rho}
    =
    \sbr{
    \begin{matrix}
     1 & 1  & 1       &   \cdots     &  1   \\
     0 &\rho(1)  & 0         &  \cdots  &    0\\
     0  &   0    & \rho(2)  &    0     &     0  \\
      &     &  \vdots &       &      \vdots    \\
     0 &  0    &\cdots        & 0  &        \rho(d-1)
    \end{matrix}
    }, \quad 
    B
    =  
    \sbr{
    \begin{matrix}
     1 \\
     0 \\
     \vdots \\
     0 \\
     0
    \end{matrix}
    }, Q
    =
    \sbr{
    \begin{matrix}
     1       & 0       & \cdots  \\
     0      & 0         &  \cdots\\
     \vdots  &        &  0  
    \end{matrix}
    }, 
\end{align*}
where $|\rho| < 1$. As before, the optimal policy of $L_\rho$ and the LQR system $L_\rho = (A_{\rho},B,I_d)$ is invariant by \pref{thm: invariance of optimal policy}: only the first coordinate of this system is controllable. For simplicity we analyze $L_\rho$.

Observe that if a state variable is initialized as $x_0(i)=0$ for any $i\in \cbr{2,..,d}$ then it remains zero, no matter which action is applied, since these coordinates are uncontrollable. Furthermore, since $K_{\rho,\star}\in \mathbb{R}^{d_u\times d}$ induces an optimal policy \emph{for any} state variable, it induces an optimal policy for any such initial state. 

Observe that if we initialize the state variable as $x_0(i) = \ind\cbr{i= i_0}$ for some $i_0\in \cbr{2,..,d}$ the system is effectively equivalent to the $2$-dimensional system of \pref{app: closed form solution riccati}. For this two dimensional system, we show the optimal controller is a function of $\rho$, see~\eqref{eq: two d system non controllable information is relevant}. This establishes the fact that for any two different vectors $\rho_1\neq \rho_2$ the optimal policy of $L_{\rho_1} = (A_{\rho_1},B,I_d)$ and $L_{\rho_2} = (A_{\rho_2},B,I_d)$ is different.

\subsection{Solving the Riccati Equation}\label{app: closed form solution riccati}
The Riccati equation, for the above systems, has the following form.
\begin{align*}
    &P_{\rho,\star}= A_{\rho}^{\top} P_{\rho,\star} A_{\rho} + Q - (B^TP_{\rho,\star}A_{\rho})^{\top}(R+B^TP B)^{-1}B^TP_{\rho,\star}A_{\rho}\\
    &=
    \sbr{\begin{matrix}
      P_{\rho,1} &P_{\rho,1} + \rho P_{\rho,12}\\
      P_{\rho,1} + \rho P_{\rho,12} &P_{\rho,1} + 2\rho P_{\rho,12} +\rho^2 P_{\rho,2}
    \end{matrix}} 
    + I_2 - \frac{1}{1+P_{\rho,1}}
    \sbr{\begin{matrix}
     P_{\rho,1}^2 &P_{\rho,1}(P_{\rho,1} + \rho P_{\rho,12})\\
      P_{\rho,1}(P_{\rho,1} + \rho P_{\rho,12})  & (P_{\rho,1} + \rho P_{\rho,12})^2
    \end{matrix}}.
\end{align*}
\paragraph{Solving for $P_{\rho,1}$. } We solve the Riccati equation for its $(1,1)$ entry. For this entry, we get
\begin{align*}
     P_{\rho,1}^2 -  P_{\rho,1}-1=0.
\end{align*}
Solving for $P_{\rho,1}$ we get two solutions, independently of the value of $x$.
\begin{align*}
    P_{\rho,1} \equiv P_1= \frac{1 \pm \sqrt{5}}{2}.
\end{align*}
Eventually, we will show that only a single solution is valid among the two.
\paragraph{Solving for $P_{\rho,12}$. } We solve the Riccati equation for its $(1,2)$ entry (or, equivalently $(2,1)$). For this entry, we get
\begin{align}
    P_{\rho,12} = \frac{P_1}{1+P_1 -\rho} = \frac{P_1}{P_1^2 -\rho}. \label{eq: result on P12}
\end{align}

\paragraph{Solving for $P_{\rho,2}$.} Finally, we solve the Riccati equation for its $(2,2)$ entry. For this entry, we get

\begin{align}
    P_{\rho,2} = \frac{\rbr{P_1^5 -P_1\rho^2 - P_1^4}/(1-\rho^2)}{ (P_1^2 -\rho)^2} \label{eq: relation 1 uncontrollability is necessary}
\end{align}


\paragraph{Picking a solution.} Observe that the eigenvalues of a $2\times 2$ matrix are
\begin{align}
    \lambda_{\pm} = \frac{\tr(A) \pm \sqrt{\tr(A)^2 - 4 \det(A)}}{2}. \label{eq: eigenvalues 2X2}
\end{align}
We now show that $P_1= \frac{1 + \sqrt{5}}{2}$ is a PSD solution whereas $P_1 = \frac{1 - \sqrt{5}}{2}$ induces a non-PSD $P_{\rho,\star}$.
\paragraph{ $P_1= \frac{1 + \sqrt{5}}{2}$ is a PSD solution.} We check that $\det(P_{\rho,\star})\geq 0$ and $\tr(P_{\rho,\star})\geq 0$. This implies that $P_1= \frac{1 + \sqrt{5}}{2}$ is a PSD solution by~\eqref{eq: eigenvalues 2X2}. We show that  $\det(P_{\rho,\star})\geq 0$. Since $P_{\rho,\star}$ is symmetric, this condition is equivalent to
\begin{align*}
    &\rbr{P_1^6 -P_1^2\rho^2 - P_1^5}/(1-\rho^2) \geq P_1^2\\
    \iff &P_1^6 -P_1^2\rho^2 - P_1^5 \geq P_1^2(1-\rho^2)\\
    \iff &P_1^4 -P_1^3\geq1,
\end{align*}
which holds since $P_1^4 -P_1^3\geq 2.6$. We show that $\tr(P_{\rho,\star})\geq 0$. To show that, it suffices to check that
\begin{align*}
    &P_1 + \frac{\rbr{P_1^5 -P_1\rho^2 - P_1^4}/(1-\rho^2)}{ (P_1^2 -\rho)^2}\geq 0.
\end{align*}
Since $P_1^5 -P_1\rho^2 - P_1^4\geq 0$ for $\rho\in (-1,1)$ and $P_1=(1+\sqrt{5})/2$ we get that $\tr(P_{\rho,\star}) \geq 0$. Hence, $P_1= \frac{1 + \sqrt{5} }{2}$ induces a PSD solution.

\paragraph{ $P_1= \frac{1 - \sqrt{5}}{2}$ is not a PSD solution.} We show that for this solution, either $\det(P_{\rho,\star})<0$ or $\tr{P_{\rho,\star}}<0$. This implies, by~\eqref{eq: eigenvalues 2X2} that the matrix has a negative eigenvalues and thus it is not a PSD matrix. This contradicts the fact $P_{\rho,\star}$ is PSD. 

By the above calculation, and since $P_{\rho,\star}$ is symmetric, it holds that
\begin{align*}
    &\det(P_{\rho,\star}) = P_1 P_{\rho,2} - P_{\rho,12}^2.
\end{align*}
To show that $\det(P_{\rho,\star})<0$ it suffices to show 
\begin{align*}
    &\rbr{P_1^6 -P_1^2\rho^2 - P_1^5}/(1-\rho^2) < P_1^2\\
    \iff &P_1^6 -P_1^2\rho^2 - P_1^5 <P_1^2(1-\rho^2)\\
    \iff &P_1^4 -P_1^3<1,
\end{align*}
which always holds since $P_1^4 -P_1^3< 0.4.$ Thus, $\det(P_{\rho,\star})<0$ for $P_1= \frac{1 + \sqrt{5}}{2}$ which implies this solution should be eliminated.

\newpage
\section{Invariance of Optimal Policy of a PC-LQ}\label{app: invariance of optimal policy of a PC-LQR}

\propositionInvarianceOfOptimalPolicyWithDifferentDynamics*
\begin{proof}
{\bf First statement.}  First, we show that for any fixed and stable policy $K$, the difference in values between $L_1$ and $L_2$ does not depend on the policy $K$ when the cost is transformed $I_d \rightarrow I_{1+}$. Fix $K$ which stabilizes $A_1$ and $x\in \mathbb{R}^d$. We calculate the difference $J_{L_2,K}(x) - J_{L_1,K}(x)$ and show it does not depend on $K$. It holds that
\begin{align*}
    &J_{L_2,K}(x) - J_{L_1,K}(x) = \EE\sbr{\sum_{t\geq 1} c_2(x_t) - c_1(x_t)|x_1=x;K}  = \EE\sbr{\sum_{t\geq 1} \sum_{i\in \Ical}\norm{x_{t}(i)}_2^2 \mid x_1=x;K},
\end{align*}
where $\Ical$ is the set of coordinates for which the diagonal of $I_{1+}$ is zero. That is, 
\begin{align*}
    \Ical = \cbr{i \in [d]: I_{1+}(i,i) = 0}.
\end{align*}
Observe that for any coordinate $i\in \Ical$ the state variable $x_{t}(i)$ is in either the second or third blocks of~\eqref{eq: main body central lqr model }, the coordinates that corresponds to uncontrollable state variables. Thus, $x_t(i)$ for any $i\in \Ical$ is not affected by the policy $K$ (see~\pref{lem: marginilizing over endo state and policy dependence}). This implies that for any $x\in \mathbb{R}^d$,
\begin{align*}
    J_{L_2,K}(x) - J_{L_1,K}(x) = \EE\sbr{\sum_{t\geq 1} \sum_{i\in \Ical}\norm{x_{t}(i)}_2^2 \mid x_1=x;K} = C,
\end{align*}
i.e., the difference $J_{L_2,K}(x) - J_{L_1,K}(x)$ is constant. This implies that for any $x$,
\begin{align*}
    \arg\min_{K} J_{L_1,K}(x)= \arg\min_{K} J_{L_2,K}(x).
\end{align*}
Hence, the policy $u(x) = K^*(L_1)x$ which is optimal for $L_1$ is also optimal for $L_2$.

{\bf Second statement.} Combining the first statement together with \pref{lem: invariance under model transformation} we prove the claim. That is, consider two alternative PC-LQR problems, $\tilde{L}_1 = (A,B,I_1), \tilde{L}_2=(\bar{A},B,I_1)$ where $I_1$ is diagonal such that $$I_1(i,i) = \ind\cbr{\textit{i belogns to the first block}}.$$ 
By \pref{lem: invariance under model transformation} it holds that
\begin{align*}
    &K^*(L_1) = K^*(\tilde{L}_1), \textit{and}, K^*(L_2) = K^*(\tilde{L}_2).
\end{align*}
Then, by the first statement it holds that 
\begin{align*}
    K^*(\tilde{L}_1) = K^*(\tilde{L}_2).
\end{align*}
Combining the two relations concludes the proof.
\end{proof}

\begin{lemma}[Invariance of Optimal Policy Under Model Transformation]\label{lem: invariance under model transformation}
Consider the following LQR problems, $L_1=(A,B,I_1),L_2 = (\bar{A},B, I_1)$ where the dynamics are given by
\begin{align*}
    A =
    \sbr{
    \begin{matrix}
     A_1 & A_{12} & 0\\
     0 & A_2 & 0 \\
     0 & A_{32} & A_3
    \end{matrix}},\
    \bar{A} =
    \sbr{
    \begin{matrix}
     A_1 & A_{12} & 0\\
     0 & A_2 & 0 \\
     0 & \bar{A}_{32} & \bar{A}_3
    \end{matrix}},\ B = 
    \sbr{\begin{matrix}
     B_1\\
     0\\
     0
    \end{matrix}
    },
\end{align*}
and,
\begin{align*}
    I_1 = \sbr{
    \begin{matrix}
     I & 0 & 0\\
     0 & 0 & 0 \\
     0 & 0 & 0
    \end{matrix}}.
\end{align*}
Then, the optimal policy of the two models is similar, i.e., $K^*(L_1) = K^*(L_2).$
\end{lemma}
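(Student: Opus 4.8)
The plan is to show that for \emph{both} $L_1$ and $L_2$ the Riccati solution is supported entirely on the first two (``relevant'') coordinate blocks, and that on this support it coincides with the Riccati solution $P_{\star,1:2}$ of the $s$-dimensional subsystem $L_{1:2}=(A_{1:2},B_{1:2},I_{1:2})$ formed by the top-left blocks. Since $A_{1:2}$, $B_{1:2}$, and $I_{1:2}$ are identical in $L_1$ and $L_2$ --- the blocks $A_3,A_{32}$ and $\bar A_3,\bar A_{32}$ never enter them --- this forces $K^*(L_1)=K^*(L_2)$ once we confirm the gain depends only on the relevant blocks. Because the LQR gain from the preliminaries, $K^*=(I+B^\top P_\star B)^{-1}B^\top P_\star A$, depends only on $(A,B,Q)$ and not on the noise, it suffices to work with the Riccati equation directly.

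Concretely, I would write the Riccati equation for $L_1$ with $R=I$,
\[
P = A^\top P A + I_1 - A^\top P B\,(I+B^\top P B)^{-1} B^\top P A,
\]
and propose the candidate $P = \sbr{\begin{matrix} P_{\star,1:2} & 0 \\ 0 & 0\end{matrix}}$, i.e.\ the subsystem solution padded with zeros on every entry touching the third block. The main computation is to verify this candidate satisfies the full equation, exploiting two sources of zeros: $B$ is supported on block $1$, so $B^\top P B=B_1^\top (P_{\star,1:2})_{11}B_1$ involves only the top-left block; and the $(1,3),(2,3)$ blocks of $A$ vanish, so the zero third block of the candidate propagates through $PA$, $PB$, and hence through every term of the equation. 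A direct block expansion then shows the third row/column of the equation reduces to $0=0$, while the top-left $2\times2$ block reduces \emph{exactly} to the Riccati equation of $L_{1:2}$, which holds by definition of $P_{\star,1:2}$.

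Next I would argue this candidate is the stabilizing PSD solution, so that it genuinely equals $P_\star$. It is PSD since $P_{\star,1:2}\succeq 0$. The induced gain $K^*=(I+B^\top P B)^{-1}B^\top P A$ has a zero third block (again because $B^\top P A$ has zero third block), so the closed loop $A+BK^*$ is block triangular, with diagonal blocks equal to the subsystem closed loop $A_{1:2}+B_{1:2}K^*_{1:2}$ (stable, as $P_{\star,1:2}$ is stabilizing) and $A_3$. Since $(A,B)$ is stabilizable, its uncontrollable blocks $A_2,A_3$ are stable, so the entire closed loop is stable. Uniqueness of the stabilizing PSD Riccati solution then pins down $P_\star$ as our candidate. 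The only subtlety is detectability of $(A,I_1^{1/2})$: any eigenvector of $A$ annihilated by $I_1$ has zero first block, $v=(0,v_2,v_3)$, and then $Av=\lambda v$ forces $\lambda$ to be an eigenvalue of $A_2$ or $A_3$, hence stable --- so there are no unstable unobservable modes.

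Finally, reading off $K^*=(I+B^\top P_\star B)^{-1}B^\top P_\star A$ shows it is a function only of $P_{\star,1:2}$, $A_1,A_{12},A_2$ and $B_1$ (with zero third block), none of which differ between $L_1$ and $L_2$; hence $K^*(L_1)=K^*(L_2)$. The main obstacle is the detectability/uniqueness step: because the cost $I_1$ penalizes only the first block, one must rule out unobservable unstable modes, and the whole argument hinges on stabilizability forcing the uncontrollable blocks $A_2,A_3$ (and thus every $I_1$-unobservable mode) to be stable. The block verification itself is routine once the zero patterns are tracked.
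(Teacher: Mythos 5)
Your proof is correct, but it takes a genuinely different route from the paper's. The paper argues via policy iteration: it initializes both $L_1$ and $L_2$ at the same stabilizing policy $K_0^E=[K_0\ 0\ 0]$ and shows by induction that the iterates of the two systems coincide at every step and never acquire a third block (the key auxiliary fact being that any stable policy with zero third block has a value matrix $P_K$ supported on the first two blocks); convergence of policy iteration then yields $K^*(L_1)=K^*(L_2)$ without ever identifying the limit. You instead guess the fixed point directly: you pad the subsystem Riccati solution $P_{\star,1:2}$ with zeros, verify by block expansion that it solves the full DARE, and invoke uniqueness of the stabilizing PSD solution (under stabilizability and detectability) to conclude that it equals $P_\star$, after which the gain formula visibly depends only on $A_1,A_{12},A_2,B_1$ and $P_{\star,1:2}$, none of which differ between $L_1$ and $L_2$. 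Your approach buys an explicit identification of $P_\star$ — the paper only ever learns its zero pattern — and replaces the induction with a single algebraic verification, at the price of having to supply the detectability argument for $(A,I_1^{1/2})$, which you do correctly by observing that every $I_1$-unobservable mode has zero first block and hence lies in the spectrum of $A_2$ or $A_3$, both stable by stabilizability. The paper's route avoids DARE existence/uniqueness theory altogether, needing only the convergence theorem for policy iteration (stabilizability plus a Hermitian subsolution of the Riccati inequality). Both arguments ultimately rest on the same two structural facts: $B^{\top}PA$ annihilates the third block-row of $A$ whenever $P$ has zero third block (so $A_{32},A_3$ never enter the gain), and stabilizability forces $A_2$, $A_3$, and $\bar{A}_3$ to be stable.
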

To prove this result, we consider the run of the policy iteration algorithm on both $L_1$ and $L_2$ for a specific initialization. We show, that there is a conserved structure on both $L_1$ and $L_2$ by which we conclude that $P^\star(L_1)=P^\star(L_2)$ by applying \pref{thm: convergence of policy iteration for LQR}. The formal proof is given as follows.
\begin{proof}
{\bf Step 1.} Verifying conditions of \pref{thm: convergence of policy iteration for LQR}. First,  see that $\tilde{P}=0$ satisfies the linear inequality in the requirement of \pref{thm: convergence of policy iteration for LQR}. We now show that exists a stable policy for both $L_1,L_2$. \\
Let $K_0\in \mathbb{R}^{d_u\times s_c}$ be a stable policy for $(A_1,B_1)$. Indeed, since $(A_1,B_1)$ are controllable, such policy exists. We first claim that $A+BK_0^{E}$ where $K_0^{E}  = \sbr{\begin{matrix} K_0 & 0 & 0 \end{matrix}}\in \mathbb{R}^{d_u\times d_x}$ is a stable policy 
To prove this claim, observe that due to the block structure of $A+BK_0^E$ it holds that
\begin{align*}
    \det(A+BK_0^{E}-\lambda I)=  \det(A_{1}+B_1K_0-\lambda I_1)\det(A_{2}-\lambda I_2)\det(A_{3}-\lambda I_3),
\end{align*}
thus, $\lambda$ is an eigenvalue of $A+BK_0^{E}$ if and only if it is an eigenvalue of either $A_{1}+B_1K_0,A_{2}$ or $A_{3}$. Since all of these systems are stable, i.e., every eigenvalue is smaller than one, then $A+BK_0^{E}$ is also stable. Furthermore, since $\bar{A}_3$ is also assumed to be stable, then, by similar reasoning, $\bar{A}+BK_0^{E}$ is stable.

{\bf Step 2.} Applying policy iteration on both $L_1$ and $L_2$ with the initialized $K_0^{(E)}$. We now apply the policy iteration algorithm on both $L_1$ and $L_2$, where we initialize both from $K_0^{(E)}$. Let $K^{(1)}_i, K^{(2)}_i$ be the policies obtained at the $i^{th}$ iteration when running policy iteration on $L_1$ and $L_2$, respectively.\\
The following claim is established via induction: for any iteration $i\geq 0$, it holds that $$K^{(1)}_i =K^{(2)}_i = \sbr{K_{i,1},\ K_{i,2},\ 0},$$ i.e., the policy does not depend on the third block. Due to the convergence of policy iteration to the optimal policy, this result will conclude the proof.\\
\emph{Base case.} Holds due to the initialization $K^{(1)}_0 =K^{(2)}_0 = K_0^E$.\\
\emph{Inductive step.} Assume the claim holds until the $(i-1)^{th}$ iteration. We prove it holds for the $i^{th}$ iteration. Since the policies at the $(i-1)^{th}$ iteration are equal and does not depend on the third block by the induction hypothesis (the third block is zero) it holds that
\begin{align*}
    P^{(1)}_{i-1} =P^{(2)}_{i-1} = P_{i-1}
    =  
    \sbr{
    \begin{matrix}
     P_1 & P_{12} & 0\\
     P_{12} & P_2 & 0\\
     0 & 0 & 0
    \end{matrix}
    },
\end{align*}
by \pref{lem: form of exo independant policy and P} for some $P_{1},P_{12},P_2$. The policy at the $i^{th}$ iteration is given by
\begin{align*}
    &K_{i}^{(1)} = (R + B^{\top} P_{i-1}B)B^TP_{i-1}A\\
    &K_{i}^{(2)} = (R + B^{\top} P_{i-1}B)B^TP_{i-1}\bar{A}
\end{align*}
By a direct calculation due to the form of $P_{i-1}$, it can be observed that $K_{i}^{(1)} = K_{i}^{(2)} = \sbr{\begin{matrix} K_{i,1} & K_{i,2} & 0\end{matrix}}$ for some $K_{i,1},K_{i,2}$, that is, $K_{i}^{(1)}$ and $K_{i}^{(2)}$ are equal and both do not depend on the third block. Hence, the induction step is proven, and the lemma follows.
\end{proof}

\begin{lemma}\label{lem: marginilizing over endo state and policy dependence}
Let $x_{t}(i)$ be a state vector where $i$ belongs either to the second or third blocks of a PC-LQR. That is, state vector of the uncontrollable coordinates. Then, for any policy $K$ it holds that
\begin{align*}
    \EE\sbr{\sum_{t\geq 1} x_{t}(i)^2 \mid x_1=x;K} = C,
\end{align*}
that is, it does not depend on the policy $K$.
\end{lemma}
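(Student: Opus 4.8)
The plan is to exploit the block-triangular structure of the PC-LQ dynamics in~\eqref{eq: main body central lqr model } to show that the entire trajectory of any uncontrollable coordinate is a stochastic process determined solely by the initial state and the noise sequence, with no dependence whatsoever on the control inputs, and hence no dependence on the policy.

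First I would partition the state as $x_t = (x_t^{(1)}, x_t^{(2)}, x_t^{(3)})$ along the three blocks of~\eqref{eq: main body central lqr model } and write out the component-wise updates. Using the fact that $B$ has nonzero rows only in the first block, together with the zero blocks of $A$, the dynamics of the second and third blocks are
\begin{align*}
    x_{t+1}^{(2)} = A_2 x_t^{(2)} + \xi_t^{(2)}, \qquad x_{t+1}^{(3)} = A_{32} x_t^{(2)} + A_3 x_t^{(3)} + \xi_t^{(3)},
\end{align*}
where $\xi_t^{(2)}, \xi_t^{(3)}$ are the corresponding sub-vectors of the i.i.d.\ noise $\xi_t$. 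The crucial point is that the input $u_t$ enters the dynamics only through $B_1$, which acts on the first block, and that the first block $x_t^{(1)}$ never feeds back into blocks two or three; consequently $u_t$ is absent from both equations above.

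Next I would argue, by a straightforward induction on $t$, that the joint uncontrollable process $\rbr{x_t^{(2)}, x_t^{(3)}}$ is a fixed (measurable) function of $x_1$ and $\cbr{\xi_s}_{s < t}$ alone. The base case $t=1$ is immediate, since the process equals the corresponding blocks of $x_1$, and the inductive step follows directly from the two update equations, neither of which involves $u_t$ or $x_t^{(1)}$. Because the law of the noise does not depend on the policy, this implies that for every coordinate $i$ in the second or third block, the conditional distribution of $x_t(i)$ given $x_1 = x$ is identical under every policy $K$; in particular $\EE\sbr{x_t(i)^2 \mid x_1 = x; K}$ is the same for all $K$.

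Finally I would sum over $t$: since each term is policy-independent, so is $\EE\sbr{\sum_{t\geq 1} x_t(i)^2 \mid x_1 = x; K}$, which I denote $C$, a quantity depending on $x$ and on the uncontrollable dynamics but not on $K$. The argument is essentially bookkeeping, and the only point requiring a little care is ensuring the infinite sum is finite: this follows from the stability of the uncontrollable subsystem, namely $\rho(A_2) < 1$ (which must hold because the system is stabilizable, so all uncontrollable modes are stable) together with the $L_\infty$-stability of $A_3$ guaranteed by~\pref{assum: L1 stability on $A_3$}. I emphasize that the argument never uses linearity of the policy: since $u_t$ does not affect blocks two and three, the identical conclusion holds for any (possibly history-dependent) policy, which is all that the proof of~\pref{thm: invariance of optimal policy} requires.
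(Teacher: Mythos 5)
Your proof is correct and follows essentially the same route as the paper's: both arguments rest on the single observation that, because $B$ and hence the feedback term act only on the first block and the first block never feeds into blocks two and three, the uncontrollable coordinates evolve as an autonomous process determined by the initial state and the noise, so the law of $x_t(i)$ is policy-independent. The paper realizes this via the closed-form trajectory $x_t=(A-BK)^t x_0+\sum_{\tau}(A-BK)^{t-\tau}\xi_\tau$ and the block structure of $e_i^{\top}(A-BK)^n$, whereas you induct directly on the block-wise recursion (which, as you note, also covers nonlinear policies); the difference is cosmetic.
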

\begin{proof}
First, observe that any power $n$ of a block matrix is given by
\begin{align}
   A^{n-1} = 
    \sbr{
    \begin{matrix}
     A_1 & X_2\\
     0 & X_3
    \end{matrix}
    }^n = \sbr{
    \begin{matrix}
     A_1^n & Poly(X_2,X_3, A_1)\\
     0 & X_3^n
    \end{matrix}
    },\label{eq: consequences of block structure}
\end{align}
where $Poly(X_2,X_3, A_1)$ is some polynomial of the matrices $X_2,X_3,A_1$. See that the full state vector of any fixed policy $K$ is give by
\begin{align*}
    x_t = (A - BK)^t x_0 + \sum_{\tau=1}^t (A - BK)^{t-\tau}\xi_\tau,
\end{align*}
where $\xi_\tau$ is an independent i.i.d. and zero mean random vector.
Let $e_i$ be a one-hot vector with $e_i(i)=1$ and zero elsewhere. Due to~\eqref{eq: consequences of block structure}, and since the first block is the only controllable block, we get that
\begin{align}
    e_i^{\top}(A - BK)^n  =  e_i^{\top}\sbr{
    \begin{matrix}
     (A_1-B_1K_1)^n & Poly(X_2,X_3, A_1,B_1K_{12})\\
     0 & X_3^n
    \end{matrix}
    } = e_i^{\top}\sbr{
    \begin{matrix}
     0 & 0\\
     0 & X_3^n
    \end{matrix}
    }, \label{eq: full dynamics evolution}
\end{align}
since $$BK = \sbr{
    \begin{matrix}
     B_1 K_1 & B_1 K_{12}\\
     0 &0
    \end{matrix}
    }, $$
and since $e_i(j)=0$ for all coordinates $j$ of the first block. Combining the above we get that
\begin{align*}
    &x_t(i) = e_i^{\top}(A - BK)^tx_0 + \sum_{\tau=1}^t e_i^{\top}(A - BK)^{t-\tau}\xi_\tau\\
    &= e_i^{\top}\sbr{
    \begin{matrix}
     0 & 0\\
     0 & X_3^t
    \end{matrix}
    }x_0 + \sum_{\tau=1}^t T\sbr{
    \begin{matrix}
     0 & 0\\
     0 & X_3^{t-\tau}
    \end{matrix}
    }\xi_{\tau} \\
    &= e_i^{\top} A_0^t x_0 +\sum_{\tau=1}^t A_0^{t-\tau} \xi_\tau,
\end{align*}
where 
$$
A_0 
= 
\sbr{
\begin{matrix}
 0 & 0\\
 0 & X_3
\end{matrix}}
$$ 
does not depend on $K$. Thus,
\begin{align*}
    \EE[x_{t}(i)^2|K] =  \EE\sbr{\rbr{e_i^{\top} A_0^t x_0 +\sum_{\tau=1}^t A_0^{t-\tau} \xi_\tau}^2|K} = C
\end{align*}
does not depend on the policy $K$, since $\xi_\tau$ is i.i.d. and has the same distribution for all $K$.
\end{proof}

\subsection{Useful Results}
\begin{theorem}[Asymptotic Convergence of Policy Iteration for LQR, e.g.,~\cite{lancaster1995algebraic}, Theorem 13.1.1.]\label{thm: convergence of policy iteration for LQR}
Assume that $(A,B)$ are stabilizable, $R$ invertible, and assume that there is an hermitian solution $\tilde{P}$ to the linear matrix inequality
\begin{align*}
    P\leq A^{\top} P A + Q - (B^TPA)^{\top}(R+B^TP B)^{-1}B^TPA,
\end{align*}
for which $R+B^{\top}\tilde{P}B>0$. Then, there exists a unique solution $P^\star$ to the Riccati equation
\begin{align}
    P= A^{\top} P A + Q - (B^TPA)^{\top}(R+B^TP B)^{-1}B^TPA,\label{eq: appendix riccati equation}
\end{align}
such that $P^\star\geq P$ for all the solutions of~\eqref{eq: appendix riccati equation}. Furthermore, the Policy Iteration procedure in which we initialize $(K_0,P_{K_0})$ with some stable policy and update
\begin{align*}
    K_{i} = (R + B^{\top} P_{i-1}B)B^TP_{i-1}A, \quad P_i = \sum_{t\geq 0}((A+BK_{i})^{\top})^t(Q + K_i^{\top} R K_i)(A+BK_{i})^t 
\end{align*}
converges to $P^\star$.
\end{theorem}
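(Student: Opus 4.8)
The plan is to reconstruct the classical Kleinman--Hewer monotonicity argument underlying this theorem: I would show that the policy-iteration iterates $\{P_i\}$ form a monotonically decreasing sequence of positive semidefinite matrices that is bounded below, hence convergent, and that the limit is the maximal solution of the Riccati equation~\eqref{eq: appendix riccati equation}. Throughout, write $A_K = A + BK$, and for any stabilizing $K$ let $P_K$ denote the unique PSD solution of the policy-evaluation (Lyapunov) equation $P_K = A_K^\top P_K A_K + Q + K^\top R K$. The improvement step produces $K_{i+1}$ as the minimizer over $K$ of the quadratic $\Phi_i(K) := Q + K^\top R K + A_K^\top P_i A_K$, where $P_i = P_{K_i}$.

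The first and most delicate step is to prove by induction that every $K_i$ is stabilizing. Assuming $K_i$ is stabilizing so that $P_i \succeq 0$ is finite, completing the square in $\Phi_i$ around its minimizer $K_{i+1}$ yields the exact identity $P_i - \Phi_i(K_{i+1}) = (K_i - K_{i+1})^\top (R + B^\top P_i B)(K_i - K_{i+1}) \succeq 0$, using $\Phi_i(K_i) = P_i$. This gives the Lyapunov inequality $A_{K_{i+1}}^\top P_i A_{K_{i+1}} - P_i \preceq -(Q + K_{i+1}^\top R K_{i+1}) \preceq 0$. I would then invoke a standard Lyapunov-stability lemma to conclude $\rho(A_{K_{i+1}}) < 1$ rather than merely $\leq 1$; this is precisely where the hypotheses $R + B^\top \tilde P B \succ 0$ and the existence of an LMI solution enter, together with a detectability-type argument. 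This strictness is the crux of the whole proof.

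With stabilization preserved, monotone decrease follows cleanly. Now $P_{i+1} = P_{K_{i+1}}$ is well defined, and subtracting its Lyapunov equation from the inequality above shows that $\Delta_i := P_i - P_{i+1}$ satisfies the Stein inequality $\Delta_i - A_{K_{i+1}}^\top \Delta_i A_{K_{i+1}} \succeq 0$. Since $A_{K_{i+1}}$ is stable, the Stein solution $\Delta_i = \sum_{t \geq 0} (A_{K_{i+1}}^\top)^t W_i A_{K_{i+1}}^t$ with $W_i \succeq 0$ is itself PSD, so $P_{i+1} \preceq P_i$. The sequence is thus monotone decreasing and bounded below by $0$, hence convergent to some limit $P_\infty$.

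Finally I would identify the limit. Because $R + B^\top P_i B \succeq R \succ 0$ keeps the improvement map continuous on the stabilizing region, passing to the limit in the recursion $P_i \mapsto K_{i+1} \mapsto P_{i+1}$ shows that $P_\infty$ is a fixed point of the Riccati operator, i.e., solves~\eqref{eq: appendix riccati equation}. To show it is the maximal solution $P^\star$, I would take any PSD Riccati solution $P$, write the difference $P_\infty - P$ as the solution of a Stein equation driven by a PSD residual (exploiting that $P$ also satisfies the associated LMI), and conclude $P_\infty \succeq P$. The main obstacle, as flagged, is the strict-stability step for the improved controllers; everything downstream is a routine Lyapunov/Stein computation once that is secured.
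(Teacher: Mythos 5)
First, a point of reference: the paper does not prove this statement at all --- it is imported as Theorem 13.1.1 of \cite{lancaster1995algebraic} and used as a black box inside the proof of \pref{lem: invariance under model transformation}, so there is no internal proof to compare against. Your outline reconstructs the classical Kleinman--Hewer/Newton-iteration argument, which is how the cited reference establishes the result, and your monotone-decrease, boundedness, and limit-identification steps are all correct as described: the completion-of-squares identity, the Stein inequality for $P_i - P_{i+1}$, and the comparison against an arbitrary Riccati solution are exactly the right ingredients.

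The one step you flag as the crux is, however, also the one place your plan would fail if executed as written. You propose to upgrade $\rho(A_{K_{i+1}})\le 1$ to $\rho(A_{K_{i+1}})<1$ via ``a detectability-type argument,'' but detectability of $(A,Q^{1/2})$ is not among the hypotheses, and in this paper's actual application $Q$ is the projection $I_{1+}$, so the pair is genuinely not detectable; a Lyapunov lemma requiring observability of the cost would not apply. The correct mechanism is purely algebraic and uses your own identity: if $A_{K_{i+1}}x=\lambda x$ with $|\lambda|\ge 1$, then $x^*\bigl(P_i - A_{K_{i+1}}^\top P_i A_{K_{i+1}}\bigr)x=(1-|\lambda|^2)\,x^*P_ix\le 0$, while the same quantity dominates $x^*\bigl(Q+K_{i+1}^\top R K_{i+1}+(K_i-K_{i+1})^\top(R+B^\top P_iB)(K_i-K_{i+1})\bigr)x\ge 0$; hence all three PSD terms vanish on $x$, and $R\succ 0$ together with $R+B^\top P_iB\succ 0$ forces $K_{i+1}x=0$ and $K_ix=K_{i+1}x$, so $A_{K_i}x=A_{K_{i+1}}x=\lambda x$, contradicting the induction hypothesis that $A_{K_i}$ is stable. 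No observability of $Q$ is needed. With that substitution your outline is a faithful proof; the LMI hypothesis is then only genuinely needed for the existence/maximality claim in the general indefinite case (here $Q\succeq 0$, $R\succ 0$ already give $P_i\succeq 0$, so the decreasing sequence is bounded below for free).
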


\begin{lemma}\label{lem: form of exo independant policy and P}
Lee $L=(A,B,I_1)$ where 
\begin{align*}
    A =
    \sbr{
    \begin{matrix}
     A_1 & A_{12} & 0\\
     0 & A_2 & 0 \\
     0 & A_{32} & A_3
    \end{matrix}},\
    ,\ B = 
    \sbr{\begin{matrix}
     B_1\\
     0\\
     0
    \end{matrix}
    },\ 
    I_1 =
    \sbr{
    \begin{matrix}
     I & 0 & 0\\
     0 & 0 & 0 \\
     0 & 0 & 0
    \end{matrix}}.
\end{align*}
Assume that a policy $K$ is stable and does not depend on the third block. Then,
\begin{align*}
    P_K
    =
    \sbr{
    \begin{matrix}
     P_1 & P_{12} & 0\\
     P_{12} & P_2 & 0\\
     0 & 0 & 0
    \end{matrix}}.
\end{align*}
\end{lemma}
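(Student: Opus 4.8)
The plan is to invoke the series representation of the value matrix for a fixed stabilizing policy supplied by \pref{thm: convergence of policy iteration for LQR}. Writing $\tilde{A} = A + BK$ and recalling that here $Q = I_1$ and $R = I$, stability of $K$ (i.e. $\rho(\tilde A)<1$) guarantees convergence of
\begin{align*}
    P_K = \sum_{t\geq 0} (\tilde{A}^{\top})^t \, (I_1 + K^{\top} K) \, \tilde{A}^t .
\end{align*}
I will show that \emph{every} summand already has a vanishing third block-row and block-column. Since this pattern is closed under summation, $P_K$ inherits it; and since each summand is symmetric, $P_K$ is symmetric, so the two off-diagonal blocks are transposes of one another and may be written as a single $P_{12}$, yielding the stated form.

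First I would record the block structure of $\tilde{A}$. By hypothesis $K$ does not depend on the third block, i.e. $K = \sbr{\begin{matrix} K_1 & K_2 & 0\end{matrix}}$, so $BK$ is supported on the first block-row and has zero third block-column. Hence $\tilde A$ has the same zero pattern as $A$: its third block-column is $\sbr{\begin{matrix} 0 & 0 & A_3 \end{matrix}}^{\top}$. The key consequence is that the coordinate subspace $V_3$ of the third block is $\tilde A$-invariant, since $\tilde A (0,0,x_3)^{\top} = (0,0,A_3 x_3)^{\top} \in V_3$. Therefore $\tilde A^t V_3 \subseteq V_3$ for every $t \geq 0$, which is exactly the statement that the $(1,3)$ and $(2,3)$ blocks of $\tilde A^t$ vanish (the same mechanism as in \eqref{eq: consequences of block structure}).

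Next I would treat the cost term $C := I_1 + K^{\top}K$. Both $I_1$, supported on the $(1,1)$ block, and $K^{\top}K$, which has zero third block-row and block-column because $K$ has zero third block-column, contribute nothing off the first two blocks, so $C$ has zero third block-row and block-column. Letting $\Pi = \mathrm{diag}(I_s, 0)$ be the orthogonal projection onto the first two blocks, this is precisely the identity $C = \Pi C \Pi$, whence each summand equals $(\Pi \tilde A^t)^{\top} C (\Pi \tilde A^t)$. Now $\Pi \tilde A^t$ has zero third block-row (introduced by $\Pi$) and zero third block-column (because the $(1,3)$ and $(2,3)$ blocks of $\tilde A^t$ already vanish by the previous step, and $\Pi$ removes the surviving $(3,3)$ block). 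Consequently each $(\Pi \tilde A^t)^{\top} C (\Pi \tilde A^t)$ has zero third block-row and block-column, and summing over $t$ delivers the claim.

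The argument is essentially bookkeeping, so there is no deep obstacle; the one point that must be handled with care is the compatibility of two distinct sparsity facts, namely that the cost matrix $C$ is supported on the first two blocks (which requires \emph{both} $Q = I_1$ and the zero third block-column of $K$) and that the transition powers $\tilde A^t$ preserve $V_3$. The projection identity $C = \Pi C \Pi$ is what cleanly couples these facts and lets the two-sided product collapse onto the relevant subsystem; getting this bracketing right is the only real subtlety.
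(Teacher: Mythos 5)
Your proof is correct and follows essentially the same route as the paper's: both expand $P_K$ as the Lyapunov series $\sum_{t\ge 0}(\tilde A^t)^\top (I_1+K^\top R K)\tilde A^t$, use the block zero pattern of $(A+BK)^t$ inherited from $K$ having zero third block-column, and observe that the cost matrix is supported on the first two blocks so that every summand has vanishing third block-row and block-column. The only cosmetic difference is that you factor through the orthogonal projection $\Pi$ via $C=\Pi C\Pi$, whereas the paper factors through the square root $(I_1+K^\top R K)^{1/2}$; the mechanism is identical.
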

See that if the policy $K$ does have a non-zero component in the third block $P_K$ might have non-zero components at the third row and third column.
\begin{proof}
Observe that by the model assumption
\begin{align*}
    A+BK = \sbr{
    \begin{matrix}
     A_1 + B_1K_1 & A_{12}+B_1 K_2 & 0\\
     0 & A_2 & 0 \\
     0 & A_{32} & A_3
    \end{matrix}}.
\end{align*}
Taking this matrix to some power $t>0$ we get that
\begin{align}
    (A+BK)^t = \sbr{
    \begin{matrix}
     (A_1 + B_1K_1)^t & V_{1,t}(A_{12},B_1,K_2,A_2) & 0\\
     0 & A_2^t & 0 \\
     0 & V_{2,t}(A_2,A_{32},A_3) & A_3^t
    \end{matrix}},\label{eq: P_K if K doesn't depend on exo}
\end{align}
where $V_{1,t},V_{2,t}$ is some polynomial in its arguments.\\ 
We now apply the previous calculation to prove the result. The matrix $P_K$ satisfies the Lyapunov relation
\begin{align*}
    P_K &= \sum_{t\geq 0}\rbr{(A+BK)^t}^{\top}(I_1 + K^{\top} R K)\rbr{(A+BK)^t}\\
    &=\sum_{t\geq 0} \rbr{(I_1 + K^{\top} R K)^{1/2}(A+BK)^t}^{\top}\rbr{(I_1 + K^{\top} R K)^{1/2}(A+BK)^t}.
\end{align*}
By a direct computation and by plugging the form of~\eqref{eq: P_K if K doesn't depend on exo} , we see that the matrix $(I_1 + K^{\top} R K)^{1/2}(A+BK)^t$ have zero elements at the third row and column, that is
\begin{align*}
    (I_1 + K^{\top} R K)^{1/2}(A+BK)^t
    = 
    \sbr{
    \begin{matrix}
     Y_1 & Y_{12} &0\\
     Y_{21} & Y_2 & 0 \\
     0 & 0 & 0
    \end{matrix}
    },
\end{align*}
for some $Y_1,Y_{12},Y_{21},Y_2$. This also implies that $((I_1 + K^{\top} R K)^{1/2}(A+BK)^t)^{\top}(I_1 + K^{\top} R K)^{1/2}(A+BK)^t$ have zero elements at the third row and column, and, hence,
\begin{align*}
    P_K =\sum_{t\geq 0} \rbr{(I_1 + K^{\top} R K)^{1/2}(A+BK)^t}^{\top}\rbr{(I_1 + K^{\top} R K)^{1/2}(A+BK)^t},
\end{align*}
have zero elements at the third row an column as well.
\end{proof}

\newpage

\section{Structural Properties of PC-LQ Problems}\label{app: structural properties of PC-LQ} 

The following lemma is well known, and is used to properly define the notion of controllable subspace, e.g.~\citep{klamka1963controllability,basile1992controlled,zhou1996robust,sontag2013mathematical}.  

\begin{lemma}[E.g.,~\cite{sontag2013mathematical}, Lemma 3.3.3.]\label{lem: controllability and structure of matrices}
Let $A\in \mathbb{R}^{n\times n}, B\in \mathbb{R}^{n\times m}$ and
\begin{align*}
    \Gcal
    =
    \sbr{
    \begin{matrix}
     B & AB & \cdots & A^{n-1} B
    \end{matrix}
    }.
\end{align*}
Then, if $\rnk(\Gcal) = r< n$ then there exists an invertible transformation such that the matrices $\tilde{A} = TAT^{-1},\tilde{B} = TB$ have the block structure
\begin{align}
    \tilde{A} = 
    \sbr{
    \begin{matrix}
     A_1 & A_2\\
     0 & A_3
    \end{matrix}
    },
    \quad
    \tilde{B} = 
    \sbr{
    \begin{matrix}
     B_1 \\
     0 
    \end{matrix}
    },\label{eq: low rank controllability supp}
\end{align}
where $A_1\in \mathbb{R}^{r\times r}, A_3\in \mathbb{R}^{d-r\times d-r}$. Conversely, if $A$ and $B$ are given by~\eqref{eq: low rank controllability supp} then $\rnk(\Gcal)\leq r.$
\end{lemma}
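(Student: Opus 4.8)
The plan is to prove this via the Kalman controllability decomposition, building the transformation $T$ from a basis adapted to the \emph{controllable subspace} $\mathcal{C} := \image(\Gcal) = \spn\cbr{B, AB, \ldots, A^{n-1}B}$, which has dimension $r$ by hypothesis. The whole argument rests on two structural facts about $\mathcal{C}$. First, $\mathcal{C}$ is $A$-invariant: by the Cayley--Hamilton theorem $A^n$ lies in the linear span of $I, A, \ldots, A^{n-1}$, hence $A\cdot\spn\cbr{B, AB, \ldots, A^{n-1}B} \subseteq \spn\cbr{AB, \ldots, A^n B} \subseteq \mathcal{C}$. Second, $\image(B) \subseteq \mathcal{C}$ trivially, since the columns of $B$ form the first block of $\Gcal$. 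These are the only properties of $\mathcal{C}$ I will use.

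Next I would choose a basis $v_1, \ldots, v_r$ of $\mathcal{C}$ and extend it to a basis $v_1, \ldots, v_n$ of $\RR^n$, setting $T^{-1} = \sbr{v_1 \mid \cdots \mid v_n}$ so that $T$ expresses vectors in the new coordinates. I would then read off the claimed block structure of $\tilde A = TAT^{-1}$ and $\tilde B = TB$ directly from the two facts above. By $A$-invariance, for each $j \le r$ the vector $Av_j \in \mathcal{C}$ is a combination of $v_1, \ldots, v_r$ alone, so the $j$-th column of $\tilde A$ has zeros below row $r$; this produces the lower-left zero block and forces $A_1 \in \RR^{r\times r}$ and $A_3 \in \RR^{(n-r)\times(n-r)}$. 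Likewise, since each column of $B$ lies in $\mathcal{C}$, its coordinate vector in the new basis vanishes below row $r$, yielding $\tilde B = \sbr{B_1;\, 0}$. Once facts (i)--(ii) are in hand this step is essentially bookkeeping.

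For the converse, suppose $A, B$ already have the stated block form. I would show by induction that $A^k B = \sbr{A_1^k B_1;\, 0}$ for every $k \ge 0$: the block-triangular identity $A\cdot\sbr{X;\, 0} = \sbr{A_1 X;\, 0}$ preserves the vanishing lower block, so every column of $\Gcal$ is zero in its last $n-r$ coordinates. Hence $\image(\Gcal)$ is contained in the $r$-dimensional coordinate subspace $\spn\cbr{e_1, \ldots, e_r}$, giving $\rnk(\Gcal) \le r$ immediately.

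There is no deep obstacle here---this is a classical result---but the one place that genuinely requires care is the \emph{direction} of the similarity transform, i.e.\ whether the basis matrix is taken as $T$ or $T^{-1}$. An inverted convention would place the zero block in the upper-right rather than the lower-left and produce the transpose of the intended structure. With $T^{-1}$ holding the adapted basis as its columns, the invariance of $\mathcal{C}$ deposits the zeros in the lower-left block exactly as the statement demands, and the location of $B_1$ follows in the same coordinates.
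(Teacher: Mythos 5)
Your proof is correct and is the standard Kalman controllability-decomposition argument: the two structural facts ($A$-invariance of $\mathcal{C}$ via Cayley--Hamilton and $\image(B)\subseteq\mathcal{C}$), the adapted basis with $T^{-1}=[v_1\mid\cdots\mid v_n]$, and the inductive computation $A^kB=[A_1^kB_1;\,0]$ for the converse are all as they should be, and you are right that the only delicate point is the orientation of the similarity transform. The paper itself does not prove this lemma---it cites it directly as Lemma 3.3.3 of \cite{sontag2013mathematical}---so your write-up simply supplies the standard textbook proof that the paper defers to.
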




\propositionExistenceOfNonEssentialDisturbances*

\begin{proof}
By \pref{lem: controllability and structure of matrices}, it holds that the controllable subpace is of rank $\leq s_c$ if and only if there exists an invertible transformation $U_1$ such that
\begin{align*}
    T_1AT_1^{-1} = 
    \sbr{
    \begin{matrix}
     A_1 & X_2\\
     0 & X_3
    \end{matrix}
    },
    \quad
    T_1B = 
    \sbr{
    \begin{matrix}
     B_1 \\
     0 
    \end{matrix}
    }. 
\end{align*}
Apply this transformation and consider the relevant disturbances matrix~\eqref{eq: definition relevant disturbances matrix} $$\Rcal\Ucal=  \sbr{\begin{matrix} X^T_{2} & X^T_3 X_{2}^{\top} & \cdots & (X^T_3)^{d-s_c} X_{2}^{\top} \end{matrix}}.$$ By \pref{lem: controllability and structure of matrices}, while plugging $X_2^{\top}=B, X_3^{\top}=A$, it holds that   $\rnk(\Rcal\Ucal)\leq  s_{e}$ if and only if there exists an invertible transformation $T_2\in \mathbb{R}^{d-s_c\times d-s_c}$ such that
\begin{align}
    &\bar{T}_2 X_3^{\top} \bar{T}^{-1}_2 = 
    \sbr{
    \begin{matrix}
     A_2^{\top} & A^T_{32}\\
     0 & A_3^{\top}
    \end{matrix}
    },\quad
    \bar{T}_2X_2^{\top} = \sbr{\begin{matrix} A_{12}^{\top} \\
    0    \end{matrix}} \nonumber \\
    \iff &T^{-1}_2 X_3 T_2 = 
    \sbr{
    \begin{matrix}
     A_2 & 0\\
     A_{32} & A_3
    \end{matrix}
    },\quad
    X_2 T_2 = \sbr{\begin{matrix} A_{12} &   0    \end{matrix}} \label{eq: controllability rd matrix relation 1}
\end{align}
where $T_2 =\bar{T}_2^{\top}.$
Define an invertible transformation extended to $\mathbb{R}^d$,
\begin{align*}
    T_3 
    =
    \sbr{
    \begin{matrix}
     I & 0\\
     0 & T_2^{-1}
    \end{matrix}
    },\quad  T_3^{-1}
    =
    \sbr{
    \begin{matrix}
     I & 0\\
     0 & T_2
    \end{matrix}
    }.
\end{align*}
Then, the concatenation $T_3T_1$ yields the result since,
\begin{align*}
   &T_3 T_1AT_1^{-1}T_2^{-1}= T_3\sbr{
    \begin{matrix}
     A_1 & X_2\\
     0 & X_3
    \end{matrix}
    }T_3^{-1}\\
    &= \sbr{
    \begin{matrix}
     A_1 & X_2T_2\\
     0 & T_{2}^{-1}X_3T_2
    \end{matrix}
    }\\
    &=     \sbr{
    \begin{matrix}
     A_1 & A_{12} & 0\\
     0 & A_2 & 0\\
     0 & A_{32} & A_3
    \end{matrix}
    }, 
\end{align*}
where the last relation holds by~\eqref{eq: controllability rd matrix relation 1}.
\end{proof}

We now prove \pref{prop: lqr rud and minimal invariant subspaces}. This proposition gives an alternative characterization of a PC-LQR relatively to~\pref{prop: necessary and sufficient for existence of non essential disturbances}. Specifically, \pref{prop: lqr rud and minimal invariant subspaces} characterizes a PC-LQR by invariant and minimal invariant subspaces (which we review in \pref{app: invariant subspace and minimal invarinat subspace}) instead of relaying on the notion of the controllability matrix and the relevant disturbances matrix. Before supplying with the proof observe that if $V$ is an invariant subspace of $A$ with $\dim(s)$ then, $A$ can be written as
\begin{align}
    A = 
    \sbr{
    \begin{matrix}
     A_1 & A_{12}\\
     0 & A_2
    \end{matrix}
    }, \label{eq: form of invariant subspace supp}
\end{align}
in the basis were the first $s$ coordinates span $V$.

\PropFromLQRUDtoMIS*
Given the definition of minimal invariant subspace, the proof is straightforward.
\begin{proof}
{\bf $\rightarrow$.} If an LQR is equivalent to a PC-LQR then, there exists some basis such that the dynamics of $L$ is given as
\begin{align*}
    A =
    \sbr{
    \begin{matrix}
     A_1 & A_{12} & 0\\
     0 & A_2 & 0 \\
     0 & A_{32} & A_3
    \end{matrix}},\
    B = 
    \sbr{\begin{matrix}
     B_1\\
     0\\
     0
    \end{matrix}
    }.
\end{align*}
It can be observed that, alternatively, in this basis, we can write
\begin{align*}
    A = P_c A P_c + P_{r}A (P_{r}-P_{c}) + (I-P_{r})A(I-P_c),\quad  B = P_B B,
\end{align*}
where $P_B\subseteq P_c \subseteq P_r$, and $P_B$ is the projection on the coordinates on which $B$ has non-zero rows, $P_c$ is a projection on the coordinate of block $A_1$, and $P_r$ is a projection on the coordinates of the first two blocks. Then, rotating to the original basis does not change this representation.

{\bf $\leftarrow$.} First, rotate $B$ such that $P_B$ is diagonal. In these coordinates,
\begin{align*}
    B = \sbr{\begin{matrix}
     B_1\\
     0\\
     0
    \end{matrix}
    }.
\end{align*}
Since $P_B\subseteq P_c$ it can be jointly diagonalized with $P_B$. Thus, in this basis, since $P_c$ is an invariant subspace, we can write $A$ as
\begin{align*}
        A =
    \sbr{
    \begin{matrix}
     A_1 & X_2\\
     0 & X_3 
    \end{matrix}}, \textit{thus, }        
    (I-P_c)A^{\top} =
    \sbr{
    \begin{matrix}
     0 & 0\\
     X_2^{\top} & X_3^{\top} 
    \end{matrix}}.
\end{align*}
Since $P_c \subseteq P_r$ it can be jointly diagonalized with $P_c$ by a matrix
\begin{align*}
    U=\sbr{
    \begin{matrix}
     I & 0\\
     0 & \tilde{U} 
    \end{matrix}},
\end{align*}
where $\tilde{U} $ is orthogonal matrix. In this basic, by applying the transformation ${(I-P_c)A^{\top}\rightarrow U(I-P_c)A^{\top} U^{\top}}$, it holds that
\begin{align*}
    (I-P_c)A^{\top} = \sbr{
    \begin{matrix}
     0 & 0 \\
     \tilde{U}X_2^{\top} & \tilde{U}X_3^{\top}\tilde{U}^{\top} 
    \end{matrix}}.
\end{align*}
Furthermore, since $P_r$ is an invariant subspace, it must hold that
\begin{align*}
    (I-P_c)A^{\top} =
    \sbr{\begin{matrix}
     0 & 0 & 0 \\
     A_{32}^{\top} & A_2^{\top} &A_{12}^{\top}\\
     0& 0&  A_3^{\top} 
    \end{matrix}},
\end{align*}
since, otherwise, $P_r$ is not an invariant subspace. Lastly, observe that then we can write
\begin{align*}
    (I-P_c)A^{\top} = P_r(I-P_c)A^TP_r + (I-P_r)(I-P_c)A^TP_r +0 =  (P_r-P_c)A^{\top} P_r + (I-P_c) A^{\top} (I-P_r),
\end{align*}
using the fact that $P_c P_r= P_r P_c =P_c$ since $P_c \subseteq P_r$. Combining the above, we get that
\begin{align*}
    A = P_c A P_c +  P_rA (P_r-P_c) + (I-P_r) A (I-P_c)=
     \sbr{
    \begin{matrix}
     A_1 & A_{12} & 0\\
     0 & A_2 & 0 \\
     0 & A_{32} & A_3
    \end{matrix}},
\end{align*}
as we needed to show.

{\bf Minimal representation.} The last part of the proposition is a corollary of \pref{lem: equivalence of minimal invariant subspace and span of Krlyov matrix}. This lemma establishes that the minimal invariant subspace of $A$ w.r.t. $P_B$ and the span of of the Krylov matrix
\begin{align*}
    \Gcal
    =
    \sbr{
    \begin{matrix}
     B & AB & \cdots & A^{n-1} B
    \end{matrix}
    }.
\end{align*}
is equal. 
\end{proof}

\newpage

\section{Invariant Subspace and Minimal Invariant Subspace}\label{app: invariant subspace and minimal invarinat subspace}

An invariant subspace $V$ of a matrix $A\in \mathbb{R}^{n\times n}$ satisfies the following definition.
\begin{definition}[Invariant Subspace, e.g.,~\cite{basile1992controlled}, Section 3.2]\label{def: invariant subspace definition}
Let $A\in \mathbb{R}^{n\times n}$, and $V$ be a subspace of $\mathbb{R}^n$. We say that $V$ is an invariant subspace of $A$ if $AV\subseteq V$. 
\end{definition}

Instead of relaying on the common definition of invariant subspace (see \pref{def: invariant subspace definition}) we give an equivalent and algebraic characterization for this notion. This allows for our proofs to have a more algebraic nature which we found simpler in several proofs along this work.

\begin{restatable}[Equivalent Property of Invariant Subspace]{proposition}{propositionDefinitionsAreEquiv}\label{prop: definitions are equivalent}
Let $A\in \mathbb{R}^{n\times n}$, $V$ be a subspace of $\mathbb{R}^n$ and $P_V\in \RR^{n \times n}$ be the orthogonal projection onto $V$. The subspace $V$ is an invariant subspace w.r.t. $A$ if and only if $AP_V = P_VAP_V$. 
\end{restatable}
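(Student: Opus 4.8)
The plan is to prove both implications directly from two elementary properties of the orthogonal projection $P_V$ onto $V$: first, that $\image(P_V) = V$, so that $P_V x \in V$ for every $x \in \mathbb{R}^n$; and second, that $P_V$ restricted to $V$ is the identity, so that for any $y$ we have $y \in V$ if and only if $P_V y = y$. No further structure of $P_V$ (such as symmetry) is needed; only these two facts, together with $AV \subseteq V$ as the definition of invariance (\pref{def: invariant subspace definition}), drive the argument.

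For the forward direction, I would assume $V$ is invariant, i.e.\ $AV \subseteq V$, and show $AP_V = P_V A P_V$ by checking equality as maps on an arbitrary $x \in \mathbb{R}^n$. Since $P_V x \in V$, invariance yields $A P_V x \in V$, and because $P_V$ fixes every vector of $V$ we get $P_V(A P_V x) = A P_V x$. As $x$ was arbitrary, this gives the operator identity $P_V A P_V = A P_V$.

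For the reverse direction, I would assume $AP_V = P_V A P_V$ and deduce $AV \subseteq V$. Taking any $v \in V$, I use $P_V v = v$ to write $Av = A P_V v$, then apply the hypothesis to obtain $A P_V v = P_V A P_V v$, and finally use $P_V v = v$ once more inside to reduce this to $P_V A v$. Hence $Av = P_V(Av) \in \image(P_V) = V$, so $AV \subseteq V$ as required.

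There is no substantive obstacle here; the entire content is in choosing the right characterization of $P_V$. The only point demanding care is bookkeeping in the reverse direction, where one must apply the two projection properties in the correct order (fixing $v$, invoking the hypothesis, then reading off membership from the range), and keep straight that $P_V y = y$ is equivalent to $y \in V$ rather than merely implied by it.
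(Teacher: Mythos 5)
Your proof is correct and follows essentially the same route as the paper's: both directions hinge on the two facts that $\image(P_V)=V$ and that $P_V$ fixes exactly the vectors of $V$, and your reverse direction is the paper's argument verbatim. The only difference is cosmetic — in the forward direction the paper expands $P_V=\sum_i u_iu_i^{\top}$ over an orthonormal basis of $V$ and sums rank-one identities, whereas you check the operator identity pointwise on an arbitrary $x$, which is a slightly cleaner way to reach the same conclusion.
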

\begin{proof}
{\bf \pref{def: invariant subspace definition} $\rightarrow$  \pref{prop: definitions are equivalent}.}  Assume that $V$ satisfies \pref{def: invariant subspace definition}. We show that it also satisfies \pref{prop: definitions are equivalent}. Let $P_V$ be an orthogonal projection on the subspace $V$, which implies that $P_V= UU^{\top}$ for some $U\in \mathbb{R}^{N\times \dim(V)}$ with orthogonal columns. Furthermore, let $\cbr{u_i}_{i=1}^{\dim(V)}$ be the set of orthogonal columns. Since $u_i\in V$ it holds for each $u_i$, since \pref{def: invariant subspace definition} holds, that
\begin{align}
    &Au_i \subseteq V \nonumber \\
    \iff & A u_i = P_V A u_i\nonumber \\
    \rightarrow & A u_i u_i^{\top} = P_V A u_iu_i^{\top}. \label{eq: defn 1 implies 2 rel 1}
\end{align}
Summing on all $\dim(V)$ equations we conclude the proof of this part since,
\begin{align*}
     &A  P_V = \sum_{i=1}^{\dim(V)}A u_i u_i^{\top} \tag{$P_V= \sum_{i=1}^{\dim(V)} u_iu_i^{\top}$}\\
     &=  \sum_{i=1}^{\dim(V)}P_V A u_iu_i^{\top} \tag{Equation~\eqref{eq: defn 1 implies 2 rel 1}}\\
     &=P_V A  \sum_{i=1}^{\dim(V)}u_iu_i^{\top} =P_V AP_V. \tag{$P_V= \sum_{i=1}^{\dim(V)} u_iu_i^{\top}$}
\end{align*}

{\bf \pref{prop: definitions are equivalent} $\rightarrow$ \pref{def: invariant subspace definition}. } Assume that $V$ satisfies \pref{prop: definitions are equivalent}. We show it also satisfies \pref{def: invariant subspace definition}. Observe that by \pref{prop: definitions are equivalent} it holds that $P_VAP_V=AP_V$. Multiplying this relation by any $v\in V$ from both sides we get.
\begin{align*}
    AP_V v = P_VAP_V v.
\end{align*}
Furthermore, by \pref{lem: orthogonal projection and invariance of subspace} for any $v\in V$ it holds that $P_V v= v$. Thus,
\begin{align*}
    A v = P_V Av.
\end{align*}
This also implies that $Av\in V$, since, by \pref{lem: orthogonal projection and invariance of subspace}, any vector that satisfies $P_V u=u$ is contained within $V$, thus, $Av\in V$ since $P_V(Av)=Av$.
\end{proof}



The notion of minimal invariant subspace is given in \pref{def: minimal invariant subspace}. For such a definition to be valid, one needs to show that the minimal subspace is unique. The following result establishes this fact. That is, the minimal invariant subspace is unique, and, thus, it is a well defined notion; there are no two minimal invariant subspaces of $A$ w.r.t. a subspace~$K$.
\begin{restatable}[Minimal Invariant Subspace is Unique]{proposition}{propositionMinimalInvariantSubspaceisUnique}\label{prop: uniqueness of minimal invariant subspace}
Let $K$ be a subspace and $A\in \mathbb{R}^{n\times n}$. If $V_1$ and $V_2$ are both minimal invariant subspaces of $A$ w.r.t. $K$ then $V_1=V_2$.
\end{restatable}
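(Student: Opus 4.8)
The plan is to show that the family of invariant subspaces of $A$ w.r.t.\ $K$ is closed under intersection, and then to extract uniqueness from minimality via a dimension count. The key structural observation is that if $V_1$ and $V_2$ are two invariant subspaces of $A$ w.r.t.\ $K$, then their intersection $W := V_1 \cap V_2$ is again an invariant subspace of $A$ w.r.t.\ $K$. Once this is established, minimality does the rest.

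First I would verify that $W$ satisfies both conditions of \pref{def: minimal invariant subspace}. For condition $(i)$, since $K \subseteq V_1$ and $K \subseteq V_2$ by assumption, we immediately get $K \subseteq V_1 \cap V_2 = W$. For condition $(ii)$, take any $v \in W$; then $v \in V_1$ and $v \in V_2$, so $Av \in V_1$ (as $V_1$ is invariant) and $Av \in V_2$ (as $V_2$ is invariant), whence $Av \in W$. Thus $AW \subseteq W$, and $W$ is indeed an invariant subspace of $A$ w.r.t.\ $K$.

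With this in hand, suppose $V_1$ and $V_2$ are both \emph{minimal} invariant subspaces, so they share the common minimal dimension, call it $m$. On one hand $W \subseteq V_1$ forces $\dim(W) \leq m$. On the other hand $W$ is itself an invariant subspace of $A$ w.r.t.\ $K$, so minimality of $m$ gives $\dim(W) \geq m$. Therefore $\dim(W) = m = \dim(V_1)$, and since $W \subseteq V_1$ this forces $W = V_1$. The identical argument applied to $V_2$ yields $W = V_2$, and hence $V_1 = V_2$.

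I expect the argument to be essentially obstacle-free: the only real content is the intersection-closure step, which is a one-line check, after which minimality does all the work. The one point worth stating carefully is that minimality here is defined in terms of \emph{dimension} rather than set inclusion, so one must explicitly argue that $W$, being a bona fide invariant subspace w.r.t.\ $K$, cannot have dimension below $m$; this is precisely where the minimality hypothesis on both $V_1$ and $V_2$ is used, and it is what rules out the intersection being a strictly smaller invariant subspace.
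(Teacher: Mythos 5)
Your proof is correct, and it rests on the same key idea as the paper's: the intersection of two invariant subspaces w.r.t.\ $K$ is again an invariant subspace w.r.t.\ $K$, after which minimality forces equality. The difference is in execution. The paper argues by contradiction and establishes invariance of $V_1\cap V_2$ through the projection-matrix characterization of \pref{prop: definitions are equivalent} (manipulating identities of the form $P_{V}AP_{V}=AP_{V}$ and decomposing $P_{V_1}=P_{V_1\cap V_2}+P_{V_1/V_2}$), which takes about a page; you instead verify $AW\subseteq W$ directly from the set-theoretic definition in one line and then conclude with a direct dimension count ($\dim(W)\le m$ from $W\subseteq V_1$, $\dim(W)\ge m$ from minimality, and equality of dimension plus inclusion gives equality of subspaces). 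Your route is more elementary and avoids any dependence on the algebraic reformulation; the paper's projection-based computation is consistent with the algebraic machinery it uses elsewhere (e.g.\ in \pref{lem: equivalence of minimal invariant subspace and span of Krlyov matrix}) but buys nothing extra for this particular statement.
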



\begin{proof}
Assume that $V_1\neq V_2$ and both are minimal invariant subspaces of $A$ w.r.t. $K$. We show there exists a smaller invariant subspace then both $V_1$ and $V_2$, and, thus, get a contradiction to the assumption $V_1\neq V_2.$

By the requirement (3) of \pref{def: minimal invariant subspace} $\dim(V_1)=\dim(V_2)$. Thus, the subspaces $V_1/V_2$ and $V_1/V_2$ are non-empty. Furthermore, since both are invariant subspaces it holds that
\begin{align}
    &P_{V_1}A P_{V_1} = AP_{V_1}, \textit{and},\ P_{V_2}AP_{V_2} = AP_{V_2}. \label{eq: proof uniqueness 1}
\end{align}
Let $P_{V_1} = P_{V_1\cap V_2} + P_{V_1/ V_2}$ and $P_{V_2} = P_{V_1\cap V_2} + P_{V_2/ V_1}$. First, by multiplying the first and second relations of~\eqref{eq: proof uniqueness 1} by $P_{V_1\cap V_2}$ from the right and using $P_{V_1\cap V_2}P_{V_2/ V_1} = P_{V_1\cap V_2}P_{V_1/ V_2} = 0$, since the subspaces are orthogonal, we get
\begin{align}
    &P_{V_1}A P_{V_1\cap V_2} = AP_{V_1\cap V_2} \label{eq: proof uniqueness 2}\\
    &P_{V_2}AP_{V_1\cap V_2} = AP_{V_1\cap V_2}\nonumber 
\end{align}
which implies that
\begin{align*}
    P_{V_1}A P_{V_1\cap V_2} = P_{V_2}AP_{V_1\cap V_2}.
\end{align*}
Multiplying this relation by $P_{V_1/ V_2}$ from the left and using $P_{V_1/ V_2}P_{V_1} =P_{V_1/ V_2}^2=P_{V_1/ V_2}$ and $P_{V_1/ V_2}P_{V_2}=0$ we get
\begin{align}
    P_{V_1/ V_2}A P_{V_1\cap V_2} = 0. \label{eq: proof uniqueness 3}
\end{align}
This relation, together with~\eqref{eq: proof uniqueness 2} implies the following.
\begin{align*}
    &AP_{V_1\cap V_2} = P_{V_1}A P_{V_1\cap V_2} \tag{Equation~\eqref{eq: proof uniqueness 2}}\\
    &= ( P_{V_1\cap V_2} + P_{V_1/ V_2})A P_{V_1\cap V_2} \\
    &=  P_{V_1\cap V_2}A P_{V_1\cap V_2}. \tag{Linearity and Equation~\eqref{eq: proof uniqueness 3}}
\end{align*}
These relations imply that $$AP_{V_1\cap V_2} = P_{V_1\cap V_2}A P_{V_1\cap V_2},$$ i.e., $V_1\cap V_2$ is an invariant subspace of $A$ w.r.t. to $K$. Observe that  $K \subseteq V_1\cap V_2$: both $V_1$ and $V_2$ includes the subspace $K$, by definition, and, thus, their intersection includes $K$. Hence, we found an invariant subspace of $A$, $V_1\cap V_1$, that includes $K$, and is strictly smaller than $V_1$, since $V_1/V_2$ is non empty. Since $\dim(V_1)=\dim(V_2)$ it also implies that $V_1\cap V_2$ has smaller dimension than $V_2$ as well. This implies a contradiction, since we assumed that $V_1$ and $V_2$ are minimal subspace of $A$ w.r.t.~$K$.
\end{proof}

The next result establishes a relation between the minimal invariant subspace w.r.t. an initial subspace and the span of a Krylov matrix. This allows us to draw a correspondence between the notion of minimal invariant subspace and, e.g., controllable subspace.
\begin{lemma}[Equivalent of the Span of Krlyov Matrices and Minimal Invariant Subspace]\label{lem: equivalence of minimal invariant subspace and span of Krlyov matrix}
Let $A\in \mathbb{R}^{n\times n}, B\in \mathbb{R}^{n\times m}$ and
\begin{align*}
    \Gcal
    =
    \sbr{
    \begin{matrix}
     B & AB & \cdots & A^{n-1} B
    \end{matrix}
    }.
\end{align*}
Then, the span of $\Gcal$ and the minimal invariant subspace of $A$ w.r.t. $B$ are equal.
\end{lemma}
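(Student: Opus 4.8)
The plan is to prove the two subspaces coincide by establishing containment in both directions: first that $V := \spn(\Gcal)$ is itself an invariant subspace of $A$ containing $K := \spn(B)$ (so it is a candidate in \pref{def: minimal invariant subspace}), and second that $V$ is contained in \emph{every} invariant subspace that contains $K$. Together these force $V$ to have the smallest dimension among all invariant subspaces containing $K$, and by uniqueness (\pref{prop: uniqueness of minimal invariant subspace}) it is the minimal invariant subspace of $A$ w.r.t.\ $K$.

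First I would check that $V$ is a valid invariant subspace containing $K$. The inclusion $K \subseteq V$ is immediate, since the block $B$ appears as the first block of $\Gcal$. For invariance I would write $A V = A\spn(B) + A^2\spn(B) + \cdots + A^{n}\spn(B)$; the terms $A\spn(B), \ldots, A^{n-1}\spn(B)$ all lie in $V$ by the definition of $V$, and the only term needing attention is $A^{n}\spn(B)$. Here I would invoke the Cayley--Hamilton theorem: since $A$ satisfies its own characteristic polynomial, $A^{n}$ equals a linear combination of $I, A, \ldots, A^{n-1}$, so $A^{n}\spn(B) \subseteq \spn(B) + A\spn(B) + \cdots + A^{n-1}\spn(B) = V$. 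Hence $AV \subseteq V$, establishing that $V$ satisfies conditions $(i)$ and $(ii)$ of \pref{def: minimal invariant subspace}.

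For minimality, I would let $W$ be an arbitrary invariant subspace of $A$ with $K \subseteq W$ and show $V \subseteq W$. A straightforward induction on $j$ gives $A^{j}K \subseteq W$ for all $j \ge 0$: the base case is $A^{0}K = K \subseteq W$, and if $A^{j}K \subseteq W$ then $A^{j+1}K = A\,(A^{j}K) \subseteq A W \subseteq W$ using invariance of $W$. Taking the sum over $j = 0, \ldots, n-1$ yields $V = \sum_{j=0}^{n-1} A^{j}\spn(B) \subseteq W$. Thus $V$ is contained in every invariant subspace containing $K$, so in particular $V$ has dimension no larger than any such subspace, which identifies it as the minimal invariant subspace.

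The only non-routine ingredient is the Cayley--Hamilton step in the invariance argument: this is precisely what guarantees that the Krylov sequence $\{A^{j}B\}$ stops generating new directions beyond power $n-1$, so that $\spn(\Gcal)$ is genuinely $A$-invariant rather than merely a truncation. Everything else reduces to the definition of the span and the inductive propagation of invariance, so I expect no substantive obstacle beyond stating this step cleanly.
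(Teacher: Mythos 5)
Your proof is correct and follows essentially the same route as the paper: both directions of containment, with Cayley--Hamilton used to show that $A^{n}\spn(B)$ stays inside the Krylov span (so the span is $A$-invariant), and minimality obtained by propagating $A^{j}K \subseteq W$ through any invariant subspace $W$ containing $K$. The only difference is presentational --- you argue directly with subspaces, whereas the paper carries out the same steps via orthogonal-projection identities such as $AP_c = P_c A P_c$ --- which does not change the substance of the argument.
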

\begin{proof}
Let $B = U\Lambda V^{\top}$ be the SVD decomposition of $B$. Then, let $P_B=UU^{\top}$ and let $V_B$ be the span of $B$.
\begin{align*}
    \Gcal_{P_B}
    =
    \sbr{
    \begin{matrix}
     P_B & A P_B & \cdots & A^{n-1} P_B
    \end{matrix}
    },
\end{align*}
and observe that the span of $\Gcal$ and $\Gcal_{P_B}$ is equal. Let $V_c$ and $V_m$ be the span of $\Gcal_{P_B}$ and the span of the minimal invariant subspace of $A$ w.r.t. $V_B$.\\
{\bf $V_c \subseteq V_m$.} Since the minimal invariant subspace is an invariant subspace w.r.t. $P_B$ (that is $P_mP_B = P_B$) it satisfies that
\begin{align*}
    &A P_B = P_m A P_B, \textit{ and } A P_m = P_m A P_m.
\end{align*}
This implies that for any $n$ $$A^{n-1} P_B = A^{n-1} P_m A P_m P_B = A^{n-2} P_mAP_m AP_B =\cdots (P_mAP_m)^n P_B^n.$$
Hence,
\begin{align*}
    \Gcal_{P_B}
    =
    \sbr{
    \begin{matrix}
     P_B & A P_B & \cdots & A^{n-1} P_B
    \end{matrix}
    } = P_m \Gcal_{P_B},
\end{align*}
which implies that $V_c \subseteq V_m.$

{\bf $V_m\subseteq V_c$.} Since $V_c$ is the span of $\Gcal_{P_B}$ it holds that
\begin{align}
    \Gcal_{P_B} = P_c \Gcal_{P_B} \label{eq: span of controllability matrix}
\end{align}
This relation implies that for all $i\in [n-1] \cup \cbr{0}$
\begin{align}
    A^{i}P_B = P_c  A^i P_B.\label{eq: span of controllability matrix 1}
\end{align}
Observe that by the Cayley Hamilton theorem, the $n^{th}$ power can be written as the following sum, for some set of coefficients
\begin{align}
    A^{n}P_B  &= \sum_{i=0}^{n-1} \alpha_i A^i P_B \nonumber \\
              &=\sum_{i=0}^{n-1} \alpha_i P_c A^i P_B \tag{By~\eqref{eq: span of controllability matrix}}  \nonumber\\
              &= P_c \sum_{i=0}^{n-1} \alpha_i A^i P_B  = P_c A^n P_B \label{eq: nth power of An}
\end{align}
Thus, together with~\eqref{eq: span of controllability matrix 1}, we get that for all $i\in [n] \cup \cbr{0}$
\begin{align}
    A^{i}P_B = P_c  A^i P_B.\label{eq: span of controllability matrix 2}
\end{align}
Observe that
\begin{align*}
    &A \Gcal_{P_B}=A P_c\Gcal_{P_B} \tag{By~\eqref{eq: span of controllability matrix}}\\
    &= \sbr{
    \begin{matrix}
     A P_B & A^2 P_B & \cdots &A^{n} P_B
    \end{matrix}
    } \\
    & = P_c\sbr{
    \begin{matrix}
     A P_B & A^2 P_B & \cdots &A^{n} P_B
    \end{matrix} 
    } \tag{By~\eqref{eq: span of controllability matrix 2}}\\
    &=P_cA\sbr{
    \begin{matrix}
      P_B & A P_B & \cdots &A^{n-1} P_B
    \end{matrix}
    }\\
    &=P_cA\Gcal_{P_B}.
\end{align*}
Since $\Gcal_{P_B}\Gcal_{P_B}^{\dagger} = U^{\top}\Sigma^{1/2}V^TV \Sigma^{-1/2} U^{\top} = UU^{\top}=P_c$ the above relation implies that
\begin{align}
    A  P_c = P_cA P_c,\label{eq: span of controllability matrix 3}
\end{align}
by multiplying by $\Gcal_{P_B}^{\dagger}$ from the RHS. By \pref{prop: definitions are equivalent} this suggests that $P_c$ is an invariant subspace.

From the above, we get that $V_c$ is an invariant subspace. Furthermore, due to the form of $\Gcal_{P_B}$, it must contain the span of $B$, $V_B$. Since the minimal invariant subspace $V_m$ is the smallest subspace that contains $V_B$ and is an invariant subspace w.r.t. $A$, we get that $V_m \subseteq V_c$. This conclude the proof since it holds that $V_m \subseteq V_c$ and $V_c \subseteq V_m$, which implies that $V_m = V_c$.
\end{proof}

\subsection{Linear Algebra Facts}

\begin{lemma}\label{lem: orthogonal projection and invariance of subspace}
Let $P_V$ be an orthogonal projection onto $V$. Then, $v\in V$ if and only if
\begin{align*}
    P_V v= v.
\end{align*}
\end{lemma}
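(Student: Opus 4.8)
The plan is to use the explicit factorization of an orthogonal projection. I would begin by recalling that since $P_V$ is the orthogonal projection onto $V$, it can be written as $P_V = UU^\top$, where $U \in \mathbb{R}^{n \times \dim(V)}$ has orthonormal columns whose span is exactly $V$; in particular $U^\top U = I_{\dim(V)}$ and $\operatorname{range}(U) = V$. This is the same representation already used in the proof of \pref{prop: definitions are equivalent}, so no new machinery is needed.

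For the forward direction, suppose $v \in V$. Since the columns of $U$ form an orthonormal basis of $V$, I can write $v = Uc$ for some coefficient vector $c \in \mathbb{R}^{\dim(V)}$. Then $P_V v = UU^\top U c = U(U^\top U)c = Uc = v$, using $U^\top U = I$. For the reverse direction, suppose $P_V v = v$. Because $P_V v = U(U^\top v)$ lies in $\operatorname{range}(U) = V$, and $v$ equals this vector by hypothesis, I conclude $v \in V$.

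There is no real obstacle here: the statement is an elementary property of orthogonal projections, and both implications reduce to a single line once the factorization $P_V = UU^\top$ with $U^\top U = I$ is in hand. The only point requiring any care is to invoke the correct characterization of an orthogonal (as opposed to oblique) projection. Equivalently, one could argue via the orthogonal decomposition $\mathbb{R}^n = V \oplus V^\perp$, noting that $P_V$ acts as the identity on $V$ and annihilates $V^\perp$, so that writing $v = v_\parallel + v_\perp$ gives $P_V v = v_\parallel$; either route makes both directions immediate.
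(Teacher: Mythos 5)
Your proof is correct and takes essentially the same route as the paper: both rely on the factorization $P_V = UU^\top$ with $U$ having orthonormal columns spanning $V$, and both directions reduce to the identity $U^\top U = I$ (the paper writes the same computation with explicit sums over the basis vectors $u_i$). No gaps.
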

\begin{proof}
{\bf $\rightarrow$.} We prove that $P_V v=v$ implies that $v\in V$. Write $P_V= UU^{\top}$ where $U$ is a matrix with orthonormal columns $\cbr{u_i}_{i=1}^{\dim(V)}$ and $u_i$ span $V$. With this notation, $P_V v=v$ implies that
\begin{align*}
    v = \sum_{i=1}^{\dim(V)} \inner{u_i}{v}u_i,
\end{align*}
hence, $v$ is in the span of $V$ since we can write it as $v = \sum_{i=1}^{\dim(V)} \alpha_i u_i$ and $\cbr{u_i}_{i=1}^{\dim(V)}$ span $V$.

{\bf $\leftarrow$.} We prove that if $v\in V$ then $P_V v=v$. Since $v\in V$ then it can be written as a linear combination of $\cbr{u_i}_{i=1}^{\dim(V)}$,
\begin{align*}
    v = \sum_{i=1}^{\dim(V)} \alpha_i u_i.
\end{align*}
Since $P_V u_i = u_i$ and by the linearity of orthogonal projection we conclude the proof since
\begin{align*}
    P_V v =  \sum_{i=1}^{\dim(V)} \alpha_i P_V u_i = \sum_{i=1}^{\dim(V)} \alpha_i  u_i =v. \tag*\qedhere 
\end{align*}
\end{proof}

\newpage

\section{Learning Sparse LQRs in Partially Controllable Systems}\label{app: from PC-LQ to MIS}

We now establish the correctness of \pref{alg: control pc lqr samples} given an $(\epsilon,\delta)$ element-wise oracle (see \pref{def: dynamical system estimation oracle}). 
\LQRUDviaMIS*
\begin{proof}
{\bf Consequence of thresholded estimation.} Assume that $\widehat{A}$ and $\widehat{B}$ is an  $(\epsilon,\delta)$ entrywise estimator~\pref{def: dynamical system estimation oracle} of $A$ and $B$, and condition on the event it satisfies the entrywise estimation property. Then, the soft thresholded matrices $(\bar{A},\bar{B})$ of $(\widehat{A},\widehat{B})$ satisfy that
\begin{align*}
    \forall i,j\in [d],\ k\in [d_u]: \ A(i,j)=0 \rightarrow \bar{A}(i,j)=0,\ \mathrm{and}\ B(i,k)=0 \rightarrow \bar{B}(i,k)=0.
\end{align*}
By this property, and since the true dynamics is of the form given in \pref{prop: lqr rud and minimal invariant subspaces}, the estimates $\bar{A},\bar{B}$ can be written as follows
\begin{align}
    \bar{A} = P_{\Ical_c} \bar{A} P_{\Ical_c} + P_{\Ical_r}\bar{A} (P_{\Ical_r}-P_{\Ical_c}) + (I-P_{\Ical_r})\bar{A}(I-P_{\Ical_c}) ,\quad \bar{B} = P_{\Ical_B} \bar{B}.\label{eq: approximating PC LQR relation 1}
\end{align}

{\bf Invariance argument for estimated system.} Let $K_\star(\bar{L})$ be the optimal policy of the LQR system $\bar{L} = \rbr{\bar{A}, \bar{B}, I_d}$. This LQR system is also a PC-LQR system by comparing~\eqref{eq: approximating PC LQR relation 1} and the form supplied in \pref{prop: lqr rud and minimal invariant subspaces}. Observe that $\bar{L}$ is a stabilizable PC-LQR. 
\begin{enumerate}
    \item The system that contains the first two blocks  of $\bar{L}$ is stabilizable by utilizing the perturbation result of~\cite{simchowitz2020naive} as we formally establish below in $(*stb)$.
    \item The uncontrolled and non-relevant system $(I-P_{\Ical_r})\bar{A}(I-P_{\Ical_r})$ is stable since 
    $$
    \norm{(I-P_{\Ical_r})\bar{A}(I-P_{\Ical_r})}_\infty \leq \norm{(I-P_{\Ical_r})A(I-P_{\Ical_r})}_\infty \leq 1.
    $$
    The first inequality holds due the soft thresholding which implies that $|\bar{A}(i,j)|\leq |A(i,j)|$ which leads to the inequality. The second inequality holds by~\pref{assum: L1 stability on $A_3$}. Since $\rho((I-P_{\Ical_r})\bar{A}(I-P_{\Ical_r}))\leq \norm{(I-P_{\Ical_r})\bar{A}(I-P_{\Ical_r})}_\infty\leq 1$ we get that uncontrolled and non-relevant is stable.
\end{enumerate}

By the first and second statement of \pref{thm: invariance of optimal policy} the optimal policy is invariant under a change in the dynamics and cost. Let $\bar{L}_{\mathrm{inv}} = \rbr{\bar{A}_{\mathrm{inv}}, \bar{B}, I_{1:2}}$
\begin{align*}
    \bar{A}_{\mathrm{inv}} = P_{\Ical_c} \bar{A} P_{\Ical_c} + P_{\Ical_r}\bar{A} (P_{\Ical_r}-P_{\Ical_c}) ,\quad \bar{B} = P_{\Ical_B} \bar{B},
\end{align*}
that is, when we set $(I-P_{\Ical_r})\bar{A}(I-P_{\Ical_c})=0$, and the cost 
\begin{align*}
    I_{1:2}
    =
    \sbr{
    \begin{matrix}
      I_{s_c} & 0 & 0\\
      0 & I_{s_e} & 0\\
      0 & 0 & 0
    \end{matrix}
    },
\end{align*}
that is, we set the cost of the third block to zero ($I_{1:2}$ is a subset of $I_{1+}$ defined in \pref{thm: invariance of optimal policy}). By \pref{thm: invariance of optimal policy} it holds that
\begin{align}
    K_\star(\bar{L}) = K_\star(\bar{L}_{\mathrm{inv}}). \label{eq: approximating PC LQR relation 3}
\end{align}

{\bf Invariance argument for the true system.} By again applying the first and second statement of \pref{thm: invariance of optimal policy}, we get that the optimal policy of the true system is invariant when transforming it to the LQR system $L_{\mathrm{inv}} = (A_{\mathrm{inv}},B,I_{1:2})$ where
\begin{align*}
    A_{\mathrm{inv}} = P_{\Ical_c} A P_{\Ical_c} + P_{\Ical_r}A (P_{\Ical_r}-P_{\Ical_c}) ,\quad B = P_{\Ical_B} B.
\end{align*}
That is,
\begin{align}
    K_\star(L) = K_\star(L_{\mathrm{inv}}). \label{eq: approximating PC LQR relation 4}
\end{align}

{\bf Perturbation result on invariant systems.} We now apply a perturbation result of~\cite{simchowitz2020naive}, Theorem 5 (which we partially restate in \pref{thm: naive exploration in lqr is optimal} for convenience) on the invariant systems $\bar{L}_{\inv}$ and $L_{\inv}$. First, observe that for both $\bar{L}_{\inv}, L_{\inv}$ the optimal value has the following form
\begin{align*}
    P
    =
    \sbr{
    \begin{matrix}
      P_1 & P_{12} & 0 \\
      P_{12} & P_2 & 0\\
      0 & 0 & 0
    \end{matrix}
    },
\end{align*}
since the cost of the third block is zero $I_{1:2}$, and the dynamics of the third row and column is zero on the invariant systems $L_{\inv}$ and $\bar{L}_{\inv}$. Thus, we can eliminate the third row and third columns of the LQR systems  $\bar{L}_{\inv}$ $ L_{\inv}$ and apply a perturbation bound on the smaller system. Let  $\bar{L}_{\inv,1:2}, L_{\inv,1:2}$ be this restriction.

Observe that the errors of $\bar{L}_{\inv,1:2}$ relatively to $L_{\inv,1:2}$ scales with $\sqrt{s(s+d_u)}\epsilon$, i.e.,
\begin{align*}
    &\norm{\bar{A}_{\mathrm{inv},1:2} - A_{\inv, 1:2}}_F =\norm{\bar{A}_{P_{\Ical_c}} (\bar{A}-A) P_{\Ical_c} + P_{\Ical_r}(\bar{A}-A) (P_{\Ical_r}-P_{\Ical_c})}_F \leq \sqrt{2}s\epsilon\\
    &\norm{\bar{B}_{1:2} -B_{1:2}}_F \leq \sqrt{2sd_u}\epsilon,
\end{align*}
where the $\sqrt{2}$ factor comes from the soft thresholding operations together with the $(\epsilon,\delta)$ element-wise estimation of $(A,B)$. Setting $\epsilon  = \sqrt{\epsilon'/2s(s+d_u)}$ in the element-wise estimation of $(A,B)$, and renaming $\epsilon'$ as $\epsilon$, we get that 
\begin{align*}
    &\max\cbr{\norm{\bar{A}_{\inv,1:2} - A_{\inv,1:2}}_F,\norm{\bar{B} -B}_F}\leq \sqrt{\epsilon}\\
    &\max\cbr{\norm{\bar{A}_{\inv,1:2} - A_{\inv,1:2}}_{\op},\norm{\bar{B} -B}_{\op}}\leq \max\cbr{\norm{\bar{A}_{\inv,1:2} - A_{\inv,1:2}}_F,\norm{\bar{B} -B}_F}\leq  \sqrt{\epsilon},
\end{align*}
since for any matrix $\norm{A}_{\op}\leq \norm{A}_F.$

By Theorem 5 of~\cite{simchowitz2020naive} (see \pref{thm: naive exploration in lqr is optimal}) we get that if $\sqrt{\epsilon} \leq 54 \norm{P_\star(L_{\inv,1:2})}^5_{\op}$ then the optimal policies of $\bar{L}_{\inv,1:2}$ and $L_{\inv,1:2}$ are close and both system are stabilizable (specifically, the first two block of the estimated system $\bar{L}_{\inv,1:2}$ is stable as was needed to show in $(*stb)$). That is,
\begin{align}
    J_\star(K_\star(\bar{L}_{\inv,1:2}); L_{\inv,1:2}) \leq J_\star(K_\star(L_{\inv,1:2}); L_{\inv,1:2})+ 2C_{\mathrm{est}}(A,B) \epsilon, \label{eq: approximating PC LQR relation 5}
\end{align}
where $C_{\mathrm{est}}(A,B) = 142 \norm{P_\star(L_{\inv})}_{\op}^8$. 

Since the optimal policies of the system $L$, $L_{\inv,1:2}$ and $\bar{L}$, $\bar{L}_{\inv,1:2}$ is invariant, the above implies that,
\begin{align*}
    &J_\star(K_\star(\bar{L}); L_{\inv,1:2}) = J_\star(K_\star(\bar{L}_{\inv,1:2}); L_{\inv,1:2}) \tag{By~\eqref{eq: approximating PC LQR relation 3}}\\
    &\leq J_\star(K_\star(L_{\inv,1:2}); L_{\inv,1:2})+ 2C_{\mathrm{est}}(A,B) \epsilon \tag{By~\eqref{eq: approximating PC LQR relation 5}}\\
    & = J_\star(K_\star(L); L_{\inv,1:2})+ 2C_{\mathrm{est}}(A,B)\epsilon \tag{By~\eqref{eq: approximating PC LQR relation 4}}.
\end{align*}

Lastly, since the difference in values between the invariant and original system is a constant, that does not depend on the policy, by the first statement of \pref{thm: invariance of optimal policy}, it holds that
\begin{align*}
    &J_\star(K_\star(\bar{L}); L_{\inv,1:2})  = J_\star(K_\star(\bar{L}); L ) + C = J_\star(K_\star(\bar{L})) + C \\
    &J_\star(K_\star(L); L_{\inv,1:2})  = J_\star(K_\star(L); L ) + C = J_\star + C.
\end{align*}
Combining the above yields that
\begin{align*}
    J_\star(K_\star(\bar{L}))  \leq J_\star +2C_{\mathrm{est}}(A,B)\epsilon.
\end{align*}
\end{proof}

\begin{theorem}[\cite{simchowitz2020naive}, Theorem 5] \label{thm: naive exploration in lqr is optimal} Let $L=(A,B,I_d)$ be a stabilizable system. Given an alternative pair of matrices $\bar{L} = (\bar{A},\bar{B},I_d)$, for each $\circ\in \cbr{\op,F}$ define $\epsilon_\circ = \max\cbr{\norm{A-\widehat{A}}_\circ, \norm{B-\widehat{B}}_\circ}$. Then, if $\epsilon_{\op}\leq 1/54\norm{P_\star}^5_\op$
\begin{align*}
    J_\star(K_\star(\bar{L})) \leq J_\star +  C_{\mathrm{est}} J\epsilon_F^2,
\end{align*}
where $C_{\mathrm{est}} = 142 \norm{P_\star}^8.$
\end{theorem}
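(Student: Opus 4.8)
The plan is to bound the suboptimality of the certainty-equivalent gain $\bar K := K_\star(\bar L)$ in two stages: an exact \emph{cost-difference identity} showing the gap $J(\bar K)-J_\star$ is quadratic in the gain error $\bar K - K_\star$, and a \emph{Riccati perturbation} bound showing $\bar K$ is Lipschitz-close to $K_\star := K_\star(L)$ in the model error $\epsilon_F$. Write $J(K)$ for the steady-state cost of the stationary policy $u_t = K x_t$ on the true system $L$, so that $J(K) = \tr(P_K \Sigma_\xi)$ where $P_K$ solves the closed-loop Lyapunov equation $P_K = I + K^\top K + (A+BK)^\top P_K (A+BK)$, and let $\Sigma_K = \sum_{t\ge 0}(A+BK)^t \Sigma_\xi ((A+BK)^\top)^t$ be the closed-loop state covariance.

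First I would establish the gradient-domination identity, valid for every $K$ stabilizing $(A,B)$,
\begin{align*}
    J(K) - J_\star = \tr\rbr{\Sigma_K (K-K_\star)^\top (I + B^\top P_\star B)(K-K_\star)}.
\end{align*}
This is obtained by subtracting the Lyapunov equation for $P_K$ from the Riccati equation for $P_\star$ and invoking the optimality relation $K_\star = (I + B^\top P_\star B)^{-1} B^\top P_\star A$, which cancels the first-order cross terms and unrolls into a single positive quadratic. The identity gives the self-bounding inequality
\begin{align*}
    J(K) - J_\star \le \norm{\Sigma_K}_{\op}\,\norm{I + B^\top P_\star B}_{\op}\,\norm{K-K_\star}_F^2,
\end{align*}
and both operator norms are $\mathrm{poly}(\norm{P_\star}_{\op})$ for $K$ in a fixed neighbourhood of $K_\star$: $\norm{I+B^\top P_\star B}_{\op}$ because $\norm{B}_{\op}$ and $\norm{P_\star}_{\op}$ are bounded, and $\norm{\Sigma_{\bar K}}_{\op}$ as soon as $A+B\bar K$ remains contractive.

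The harder second step is the gain-perturbation bound $\norm{\bar K - K_\star}_F \le C\,\norm{P_\star}_{\op}^{r}\,\epsilon_F$ for an absolute constant $C$ and small power $r$. I would view the discrete algebraic Riccati solution as an implicit function $P_\star(\cdot)$ of $(A,B)$ and bound its derivative: the first-order change $\bar P_\star - P_\star$ induced by $(A,B)\to(\bar A,\bar B)$ satisfies a Lyapunov equation driven by the stable closed loop $A + B K_\star$, so $\norm{\bar P_\star - P_\star}_{\op}$ is controlled by a power of $\norm{P_\star}_{\op}$ times the model error; propagating this through $K_\star = (I+B^\top P_\star B)^{-1}B^\top P_\star A$ yields the Lipschitz estimate for $\bar K - K_\star$. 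The hypothesis $\epsilon_{\op}\le 1/\rbr{54\norm{P_\star}_{\op}^5}$ is exactly what certifies that $\bar L$ is stabilizable and admits a Riccati solution, keeps $A+B\bar K$ contractive so $\norm{\Sigma_{\bar K}}_{\op}$ stays bounded, and controls the second-order remainder in the sensitivity expansion.

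The main obstacle is this Riccati-sensitivity analysis: one must show the DARE solution map is analytic with a quantitatively explicit Lipschitz constant on a neighbourhood whose radius scales like $\norm{P_\star}_{\op}^{-5}$, tracking $\norm{P_\star}_{\op}$ through every Lyapunov inversion while simultaneously guaranteeing that the perturbed closed loop does not lose stability. Once $\norm{\bar K - K_\star}_F \le C\,\norm{P_\star}_{\op}^{r}\,\epsilon_F$ is secured, substituting into the self-bounding inequality, bounding $\norm{\Sigma_{\bar K}}_{\op}$ by the cost scale $J$, and collecting all $\norm{P_\star}_{\op}$ factors produces $J(\bar K) - J_\star \le C_{\mathrm{est}}\,J\,\epsilon_F^2$ with $C_{\mathrm{est}} = 142\,\norm{P_\star}_{\op}^8$, as claimed.
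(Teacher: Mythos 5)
This statement is not proved in the paper at all: it is a verbatim restatement of Theorem 5 of \cite{simchowitz2020naive}, quoted in \pref{app: from PC-LQ to MIS} purely so that the perturbation bound can be invoked in the proof of \pref{thm: PC-LQ main result}. So there is no internal proof to compare against; the relevant comparison is with the proof in the cited reference. Your two-stage architecture is indeed the one used there: the gradient-domination identity $J(K)-J_\star = \tr\rbr{\Sigma_K (K-K_\star)^{\top}(I+B^{\top}P_\star B)(K-K_\star)}$ is correct (it is the Fazel-et-al.\ cost-difference lemma, and it is the first ingredient of the Simchowitz--Foster argument), and reducing the theorem to a Lipschitz bound $\norm{\bar K - K_\star}_F \lesssim \norm{P_\star}_{\op}^{r}\,\epsilon_F$ is the right reduction.

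The genuine gap is that your second stage is a roadmap, not an argument, and it is precisely where all of the content of the theorem lives: the radius $1/\rbr{54\norm{P_\star}_{\op}^{5}}$, the power $8$, and the constant $142$ are outputs of the quantitative DARE-sensitivity analysis, and the implicit-function/Lyapunov route you sketch does not obviously produce them. Differentiating the Riccati map gives a Lyapunov equation for the derivative whose solution is controlled by sums of powers of the \emph{perturbed} closed loop; bounding these uniformly over the ball requires already knowing that the perturbed closed loop remains stable with a uniform Lyapunov certificate --- which is exactly what is being proved, a circularity you flag but do not break. The reference breaks it with a self-bounding ODE method: interpolate $(A(t),B(t))$ linearly between the two systems, derive a differential inequality showing $\norm{P(t)}_{\op}$ controls its own derivative, and conclude $P(t)$ cannot blow up before $t=1$ under the stated radius, which simultaneously certifies stabilizability of $\bar L$ and yields the explicit gain bound. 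Without carrying out some version of this (or an equally explicit alternative), the specific constants in the statement cannot be recovered. One further slip: $A+B\bar K$ is generally \emph{not} contractive in operator norm (only $\rho(A+B\bar K)<1$), so bounding $\norm{\Sigma_{\bar K}}_{\op}$ must pass through the $P$-weighted norm, in which the closed loop contracts at rate roughly $\sqrt{1-1/\norm{P}_{\op}}$; this is also where the powers of $\norm{P_\star}_{\op}$ in $C_{\mathrm{est}}$ accumulate, so the step cannot be waved through as ``both operator norms are $\mathrm{poly}(\norm{P_\star}_{\op})$'' without it.
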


\newpage
\section{Learning Element-wise Estimates of a Matrix} \label{supp: results of learning minimal AIP}

\subsection{Diagonal Covariance Matrix}\label{app: disc estimation via thresholding}
In this section, we analyze the sample complexity of of obtaining an $(\epsilon,\delta)$ element-wise good estimate of a matrix assuming that the covariance matrix of $x_0$ is diagonal. 

\theoremDiscViaThreshold*
This result is a direct corollary of \pref{lem: elementwise convergence of second-moment based estimation} as we now show.
\begin{proof}
Observe that we apply random inputs of the form $u_0\sim \Ncal(0,I_d)$, that $x_0 \sim \Ncal (0, \sigma_0 I_d)$ and that $\xi$ is $\sigma$ subgaussian. Thus,
\begin{align}
    x_1 = A x_0 + B u_0+ \xi. \label{eq: model with random inputs}
\end{align}
{\bf Estimation of $A$.} The estimator of $A$ is given by
\begin{align*}
    \widehat{A} = \frac{1}{N\sigma_0^2} \sum_{n} x_{1,n} x^T_{0,n},
\end{align*}
where, by~\eqref{eq: model with random inputs}
\begin{align*}
    x_{1,n} = A x_{0,n} + \xi_{B,n}
\end{align*}
and $\xi_{B,n} = \xi_0 + B u_{0,n}$. Observe that $u_0$ and $x_0$ are i.i.d., and, for any $i$, $\xi_{B,n}(i)$ is a zero mean $\sigma_B$ sub gaussian noise where
\begin{align*}
    \sigma_B = \sqrt{\sigma^2 + \norm{B(i,\cdot)}_2^2}\leq \sqrt{\sigma^2 + B_{\max}^2 d_u}
\end{align*}
Applying \pref{lem: elementwise convergence of second-moment based estimation} directly implies that 
\begin{align*}
    \PP\rbr{\forall i,j\in [d]: \abr{\hat{A}(i,j) - A(i,j)}\geq \frac{5  \gamma_{2,\delta/(6d^2)}(\sigma_{B}+ \sigma_0 \max_{i}\norm{A(i,\cdot)}_2}{\sigma_0\sqrt{N}}}\leq \delta.
\end{align*}

{\bf Estimation of $B$.} The analysis is similar to the first part. The estimator of $B$ is given by
\begin{align*}
    \widehat{B} = \frac{1}{N} \sum_{n} x_{1,n} u^T_{0,n}.
\end{align*}
By~\eqref{eq: model with random inputs}, we see that $x_{1,n}$ can be written as
\begin{align*}
    x_{1,n} = B u_{0,n} + \xi_{A,n},
\end{align*}
and $\xi_{A,n} = \xi_0 + A x_{0,n} $. Since $x_0$ and $u_0$ are i.i.d., and for any $i$  it holds that $\xi_{A,n}(i)$ is zero mean $\sigma_A$ sub gaussian noise where
\begin{align*}
    \sigma_A = \sqrt{\sigma^2 + \norm{A(i,\cdot)}_2^2} \leq \sqrt{\sigma^2 + A^2_{\max}s + 1}, 
\end{align*}
since if $i$ is in the third block it holds that $\sum_{j} (A_3(i,j))^2\leq \rbr{\sum_{j} |A_3(i,j)|}\leq 1$ by~\pref{assum: L1 stability on $A_3$}. Applying \pref{lem: elementwise convergence of second-moment based estimation} directly implies that 
\begin{align*}
    \PP\rbr{\forall i\in [d],j\in [d_u] :\abr{\hat{B}(i,j) - B(i,j)}\geq \frac{5  \gamma_{2,\delta/(6d^2)} (\sigma_{A}+ \max_{i}\norm{B(i,\cdot)}_2}{\sqrt{N}}}\leq \delta.
\end{align*}
Taking a union bound concludes the proof.
\end{proof}

\begin{lemma}[Elementwise Convergence of second-moment Based Estimation] \label{lem: elementwise convergence of second-moment based estimation}
Let $\epsilon,\delta>0$. Let the plug-in estimator of $A$ be given as
\begin{align*}
        &\widehat{A}= \frac{1}{\sigma_0^2 N}\sum_{i=1}^N x_{1,n} x_{0,n}^{\top},
\end{align*}
where $x_1= Ax_0 + \xi$, $x_0\sim \Ncal(0,\sigma_0^2 I)$ and for any $i\in [d]$ it holds that $\xi(i)$ is $\sigma$ sub gaussian. Then, 
\begin{align*}
    \PP\rbr{\forall i,j\in [d]:|\widehat{A}(i,j) - A(i,j)|\geq \frac{5  \gamma_{2,\delta/(6d^2)}(\sigma+\sigma_0\norm{A(i,\cdot)}_2 )}{\sigma_0\sqrt{N}}} \leq \delta.
\end{align*}
\end{lemma}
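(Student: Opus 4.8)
The plan is to establish the bound for a single entry $(i,j)$ with failure probability $\delta/d^2$ and then union-bound over the $d^2$ entries. First I would expand the estimator using $x_{1,n}(i) = \inner{A(i,\cdot)}{x_{0,n}} + \xi_n(i)$, giving
$$\widehat A(i,j) - A(i,j) = \frac{1}{\sigma_0^2 N}\sum_{n=1}^N \rbr{\rbr{\inner{A(i,\cdot)}{x_{0,n}} + \xi_n(i)} x_{0,n}(j) - \sigma_0^2 A(i,j)}.$$
Since $\EE[x_0(k)x_0(j)] = \sigma_0^2\ind\cbr{k=j}$ and $\xi_n$ is independent of $x_{0,n}$ with $\EE[\xi_n(i)]=0$, each summand is mean zero, so the error is a centered empirical average of i.i.d.\ terms and it remains to control its tail.

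Next I would decompose each summand into three zero-mean, sub-exponential pieces:
$$A(i,j)\rbr{x_{0,n}(j)^2 - \sigma_0^2} \ +\ \sum_{k\neq j} A(i,k)\, x_{0,n}(k)\, x_{0,n}(j)\ +\ \xi_n(i)\, x_{0,n}(j).$$
The first is a centered $\chi^2$-type variable with sub-exponential norm $\lesssim |A(i,j)|\sigma_0^2 \le \sigma_0^2\norm{A(i,\cdot)}_2$; the second is a product of the independent sub-Gaussian $\sum_{k\neq j}A(i,k)x_0(k)$ (parameter $\le \sigma_0\norm{A(i,\cdot)}_2$) with $x_0(j)$, hence sub-exponential with norm $\lesssim \sigma_0^2\norm{A(i,\cdot)}_2$; and the third is a product of the independent $\sigma$-sub-Gaussian $\xi(i)$ and the $\sigma_0$-sub-Gaussian $x_0(j)$, with norm $\lesssim \sigma\sigma_0$. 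After dividing by $\sigma_0^2$, these scales become $\norm{A(i,\cdot)}_2$, $\norm{A(i,\cdot)}_2$, and $\sigma/\sigma_0$ respectively, whose sum is $(\sigma + \sigma_0\norm{A(i,\cdot)}_2)/\sigma_0$ --- exactly the scale appearing in the claimed bound.

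Finally I would apply a standard sub-exponential (Bernstein) concentration inequality to the empirical average of each of the three pieces at confidence level $\delta/(6d^2)$; the resulting two-sided deviations are captured by the tail quantity $\gamma_{2,\delta/(6d^2)}$ times the corresponding sub-exponential scale over $\sqrt N$. Summing the three bounds via the triangle inequality and absorbing the universal constants into the factor $5$ yields, for fixed $(i,j)$,
$$\abr{\widehat A(i,j) - A(i,j)} \le \frac{5\,\gamma_{2,\delta/(6d^2)}\rbr{\sigma + \sigma_0\norm{A(i,\cdot)}_2}}{\sigma_0\sqrt N}$$
with probability at least $1 - \delta/d^2$, where the factor $6 = 3\times 2$ accounts for the three pieces and two-sidedness; a union bound over the $d^2$ entries then gives the claim. \textbf{The main obstacle} I anticipate is the second step: correctly bounding the sub-exponential norms of the products of correlated and independent sub-Gaussian variables and tracking the dependence on $\sigma_0$ through the $1/\sigma_0^2$ normalization, so that the two distinct scales $\sigma_0\norm{A(i,\cdot)}_2$ and $\sigma$ combine into the single factor above; the concentration and union-bound steps are then routine.
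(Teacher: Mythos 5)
Your proposal matches the paper's proof essentially step for step: the paper likewise splits the per-entry error into the same three zero-mean pieces (the centered diagonal term $A(i,j)(x_{0,n}(j)^2-\sigma_0^2)$, the off-diagonal cross-correlation $\sum_{m\neq j}A(i,m)x_{0,n}(m)x_{0,n}(j)$, and the noise term $\xi_n(i)x_{0,n}(j)$), controls each with sub-Gaussian product/covariance concentration at level $\delta/(6d^2)$, combines the scales into $\sigma+\sigma_0\norm{A(i,\cdot)}_2$ via a Jensen-type step, and union-bounds over the $d^2$ entries. The only cosmetic difference is that the paper routes the concentration through its own cross-correlation and covariance-estimation lemmas rather than invoking Bernstein directly; the argument is the same.
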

\begin{proof}
Observe that
\begin{align}
   \widehat{A} = A + \underbrace{\frac{1}{ N} \sum_{n} A\rbr{\frac{x_{0,n} x_{0,n}^{\top}}{\sigma_0^2}- I}}_{(i)} +  \underbrace{\frac{1}{\sigma_0^2 N}\sum_{n}\xi_n x_{0,n}^{\top} }_{(ii)} \label{eq: term 1 bound on forward discrepency relation 1}
\end{align}
We get a point-wise bound for each one of the terms to conclude the proof. \\
{\bf Term $(i)$.} Let $z_n=\frac{1}{\sigma_0}x_{0,n}$ and observe it is $\Ncal(0,I_d)$ gaussian random vector. Fix $i,j\in [d]$. It holds that the $i,j$ entry of term $(i)$ can be written as follows.
\begin{align}
    &\sbr{\frac{1}{N}\sum_{n}A(z_{n} z_{n}^{\top}-I)}_{ij}\\
    &=\underbrace{\frac{1}{N}\sum_{n}\sum_{m\neq j}A(i,m)z_{n}(m) z_{n}(j)}_{(i)} + \underbrace{\frac{1}{N}\sum_{i} A(i,j)(z_{n}(j)^2-1)}_{(ii)} \label{eq: thresholding pointwiese relation 1}.
\end{align}
To bound the first term of~\eqref{eq: thresholding pointwiese relation 1}, we write it as follows
\begin{align*}
    \frac{1}{N}\sum_{n}\sum_{m\neq k}A(i,m)z_{n}(m) z_{n}(j) = \frac{1}{N}\sum_{n}\inner{A(i,[d]/ j)}{z([d]/j)} z_{n}(j).
\end{align*}
Observe that  $\EE[\inner{A(i,[d]/ j)}{z([d]/j)} z_{n}(j)] = \EE[\inner{A(i,[d]/ j)}{z([d]/j)} ] \EE[z_{n}(j)]$ since the first term the vector $z([d]/j)$ does not contain $z_{n}(j)$, and, thus, the two are independent. By
\pref{lem: application of cross correlation} we get that with probability at least $1-\delta$ it holds that
\begin{align*}
    \abr{\frac{1}{N}\sum_{n}\sum_{m\neq j}A(i,m)x_{0,n}(m) x_{0,n}(j)} = \abr{\frac{1}{N}\sum_{n}\inner{A(i,[d]/ j)}{z_{n}([d]/ k)} z_{n}(j)} \leq \frac{3\norm{A(i,[d]/ j)}_2\gamma_{2,\delta/3}}{\sqrt{N}},
\end{align*}
for $N\geq \gamma_{2,\delta/3}.$

We bound the second term of~\eqref{eq: thresholding pointwiese relation 1} by directly applying~\pref{lem: covariance estimation sub gaussian} by which
\begin{align*}
    \abr{\frac{1}{N}\sum_{n} (z_{n}(j)^2-1)}\leq \frac{\gamma_{1,\delta} \abr{ A(i,j)}}{\sqrt{N}},
\end{align*}
with probability greater than $1-\delta$ for $N\geq \gamma_{1,\delta}.$ By taking the union bound on the two events and on all $i,j\in [d]$ we get that for all $i,j\in [d]$
\begin{align*}
    &\abr{A_{kl}\sbr{\frac{1}{N}\sum_{n}A(z_{n} z_{n}^{\top}-I)}_{ij}} \leq \frac{3\gamma_{2,\delta/(6d^2)} (|A(i,j)| + \norm{A(i,[d]/ j)}_2}{\sqrt{N}} \leq \frac{5\gamma_{2,\delta/(6d^2)} \norm{A(i,\cdot)}_2}{\sqrt{N}},
\end{align*}
with probability greater than $1-\delta.$ The last inequality follows from Jensen's inequality,
\begin{align*}
    a_i + \sqrt{\sum_{j\neq i}a_j^2} = \sqrt{a^2_i} + \sqrt{\sum_{j\neq i}a_j^2} \leq \sqrt{2}\sqrt{\sum_{i}a_j^2},
\end{align*}
since $\sqrt{c_1}+\sqrt{c_2} \leq \sqrt{2}\sqrt{c_1+c_2}$.

{\bf Term $(ii).$} See that $|\frac{1}{N}\sum_{n}\xi_{n}(i)x_{0,n}(j)|$ can be bounded by a direct application of \pref{lem: bound on cross correlation} since $\xi,x_{0,n}$ are independent. Specifically, with probability greater than $1-\delta$ it holds that
\begin{align*}
    \abr{\frac{1}{N\sigma_0^2}\sum_{n}\xi_{n}(i)x_{0,n}(j)}\leq \frac{3\sigma  \gamma_{2,\delta/(6d^2)}}{\sigma_0\sqrt{N}}
\end{align*}
for all $i,j\in [d]$ by applying the union bound.

{\bf Combining the two bounds.} By a union bound on the events by which terms $(i)$ and $(ii)$ are bounded, we get that for  $N\geq \gamma_{d}$ it holds that
\begin{align*}
    \PP\rbr{\forall i,j\in [d]:\abr{\widehat{A}(i,j) - A(i,j)}\geq \frac{5  \gamma_{2,\delta/(6d^2)}(\sigma+\sigma_0\norm{A(i,\cdot)}_2 )}{\sigma_0\sqrt{N}}} \leq \delta.
\end{align*}
\end{proof}

\begin{lemma}\label{lem: application of cross correlation}
Let $\cbr{x_{n}}_{n=1}^N$ be an i.i.d. vector such that $x_n\sim \Ncal(0,\sigma_{x} I_d)$, and let $\cbr{y_n}_{n=1}^N$ be i.i.d. $\sigma_y$ subgaussian, zero mean, random variables and assume that $N\geq \gamma_{2,\delta/3}$. Let $a\in \mathbb{R}^d$. Then, 
\begin{align*}
    \PP\rbr{\abr{\frac{1}{N}\sum_{n} \inner{a}{x_n}y_{n}} \geq  \frac{3\sigma_{x}\sigma_y\norm{a}_2\gamma_{2,\delta/3}}{\sqrt{N}}} \leq \delta. 
\end{align*}
with probability greater than $1-\delta$.
\end{lemma}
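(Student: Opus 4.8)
The plan is to reduce this vector-valued cross-correlation to a one-dimensional one and then apply a standard scalar sub-Gaussian cross-correlation bound. The key observation is that the randomness in $\inner{a}{x_n}$ collapses onto a single Gaussian scalar: since $x_n \sim \Ncal(0,\sigma_x^2 I_d)$ and $a$ is fixed, the projection $\inner{a}{x_n} = a^{\top} x_n$ is distributed as $\Ncal(0,\sigma_x^2\norm{a}_2^2)$ and is therefore a zero-mean, $\sigma_x\norm{a}_2$-sub-Gaussian scalar. Moreover, because $x_n$ and $y_n$ are independent, so are $\inner{a}{x_n}$ and $y_n$. Thus the quantity of interest is precisely the empirical cross-correlation of two independent sequences of zero-mean sub-Gaussian scalars, with variance proxies $\sigma_x\norm{a}_2$ and $\sigma_y$.

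With this reduction in hand, I would invoke the base scalar cross-correlation concentration bound at confidence level $\delta/3$ with these two parameters, which directly yields
\begin{align*}
\abr{\frac{1}{N}\sum_{n} \inner{a}{x_n} y_{n}} \leq \frac{3\,\sigma_x\sigma_y\norm{a}_2\,\gamma_{2,\delta/3}}{\sqrt{N}}
\end{align*}
with probability at least $1-\delta$, valid under the stated requirement $N \geq \gamma_{2,\delta/3}$. To keep the argument self-contained I would also recall how this base bound is obtained: writing $u_n = \inner{a}{x_n}$ and conditioning on the realization $\cbr{y_n}_{n=1}^N$, the $u_n$ are independent of the $y_n$ and each is $\Ncal(0,\sigma_x^2\norm{a}_2^2)$, so the conditional law of $\sum_n u_n y_n$ is a centered Gaussian with variance $\sigma_x^2\norm{a}_2^2\sum_n y_n^2$. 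A Gaussian tail bound then controls $\abr{\sum_n u_n y_n}$ by $\sigma_x\norm{a}_2\sqrt{\sum_n y_n^2}$ times a $\gamma_2$-type factor, uniformly over the conditioning and hence unconditionally.

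In parallel, since the $y_n$ are $\sigma_y$-sub-Gaussian, the $y_n^2$ are sub-exponential with $\EE[y_n^2]=O(\sigma_y^2)$, so $\sum_n y_n^2 = O(N\sigma_y^2)$ with high probability; the hypothesis $N \geq \gamma_{2,\delta/3}$ is exactly what forces the sub-exponential deviation term to be lower order, giving $\sqrt{\sum_n y_n^2}=O(\sqrt{N}\sigma_y)$. A union bound over the two events (splitting the failure budget into thirds) followed by division by $N$ produces the stated rate. I do not anticipate any genuine analytic obstacle here: the product of the two independent randomness sources is handled cleanly because conditioning on $\cbr{y_n}$ makes the sum exactly Gaussian. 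The only delicate part is the constant bookkeeping—tracking the $\sqrt{\cdot}$ factors from the Gaussian tail and the variance-concentration step so that they collapse into the single multiplier $\gamma_{2,\delta/3}$ and the leading constant $3$.
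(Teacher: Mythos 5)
Your proof is correct and follows essentially the same route as the paper: observe that $\inner{a}{x_n}$ is a zero-mean, $\sigma_x\norm{a}_2$-sub-Gaussian scalar independent of $y_n$, normalize both sequences, and invoke the scalar ($d_1=d_2=1$) case of the base cross-correlation bound (the paper's \pref{lem: bound on cross correlation}) at level $\delta/3$. Your optional self-contained derivation of that base bound—conditioning on $\cbr{y_n}$ to make the sum exactly Gaussian and then controlling $\sum_n y_n^2$ by sub-exponential concentration—differs from the paper's Hermitian-dilation/covariance-concentration proof of \pref{lem: bound on cross correlation}, but for the lemma under review the argument is the same reduction and is sound.
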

This result is a direct application of \pref{lem: bound on cross correlation} as we now show.
\begin{proof}
Observe that  $z_{1,n}=\frac{1}{\sigma_{x}\norm{a}_2}\inner{a}{x_n}, z_{2,n} = \frac{1}{\sigma_y} n_i$ are both $1$-sub-gaussian random variable with zero mean. Thus, to prove this result we can bound
\begin{align*}
    &\PP\rbr{\abr{\frac{1}{N}\sum_{n} \inner{a}{x_n}y_{n}} \geq \frac{3\sigma_{x}\sigma_y\norm{a}_2\gamma_{2,\delta/3}}{\sqrt{N}} }  =\PP\rbr{\abr{\frac{1}{N}\sum_{i} z_{1,n}z_{2,n}} \geq  \frac{3\gamma_{2,\delta/3}}{\sqrt{N}}},
\end{align*}
where both $z_{1,n}$ and $z_{2,n}$ are independent. Thus, we can apply \pref{lem: bound on cross correlation} while setting $d_1=d_2=1$ and conclude the proof.
\end{proof}

\newpage
\subsection{Positive Definite Covariance Matrix}\label{app: disc estimation via ls with nuisance parameter}
We now analyze the sample complexity of obtaining an element-wise good estimation of a matrix assuming that the covariance matrix of $x_0$ is PD. This result is a corollary of a careful semiparametric LS analysis we supply in the next section.

\corrDiscViaNuisanceLS*
\begin{proof}
Fix an $i,j\in [d]$. For any such $i,j$ we can estimate $A(i,j)$ via a semiparametric LS where the model is
\begin{align*}
        x_{1}(i) = A(i,j) x_0(j) + \inner{A(i, [d]/ j)}{x_0([d]/j)} +\xi_i.
\end{align*}
Applying \pref{prop: semiparameteric least model sample complexity} and setting $d_w = 1,d_e=d-1\leq d$ and $|w_\star|=|A(i,j)|\leq A_{\max}$ yields the bound for any any fixed $i,j\in [d]$. Applying the union bound on all $i,j\in [d]$ concludes the proof for estimating matrix $A$.
\end{proof}

\subsubsection{Semiparametric Least Squares for Linear Model}\label{app: semi parametric ls linear model}

\begin{algorithm}[t]
\caption{Semiparametric Least Squares}\label{alg: semi parametric regression}
\begin{algorithmic}[1]
\State {\bf Require:} Number of samples $N>0$, row and column indices $i,j\in [d]$
\State Sample $\cbr{(y_n , x_{1,n} , x_{2,n})}_{n=1}^{2N}$
\State Estimate cross correlation $\widehat{L} = \rbr{\sum_{n=1}^{N} x_{1,n}x_{2,n}^{\top}} \rbr{\sum_{n=1}^{N} x_{2,n}x_{2,n}^{\top}}^\dagger $
\State Estimate conditional output
$
    \widehat{c} = \rbr{\sum_{n=1}^{N} x_{2,n}x_{2,n}^{\top}}^\dagger \rbr{\sum_{n=1}^{N} y_n x_{2,n}} 
$
\State Estimate $\widehat{w}$ through plug-in
$$
\widehat{w} =\rbr{\sum_{n=N+1}^{2N} (x_{1,n} -  \widehat{L}x_{2,n})(x_{1,n} -  \widehat{L}x_{2,n})^{\top}}^\dagger \rbr{\sum_{n=N+1}^{2N} \rbr{y_n - \inner{\widehat{c}}{x_{2,n}}} (x_{1,n} -  \widehat{L}x_{2,n})}
$$
\State {\bf Output:} $\widehat{w}$
\end{algorithmic}
\end{algorithm}

Consider the following model
\begin{align}
    y = \inner{w_\star}{x_1} + \inner{e_\star}{x_2} +\epsilon \label{eq: ls with nuisnace parameter for linear model},
\end{align}
where $x_1\in \mathbb{R}^{d_w},x_2\in \mathbb{R}^{d_e}$ and $\epsilon$ is a zero mean $\sigma$ sub-gaussian noise. Furthermore, assume that the covariance matrix of $[x_1 \quad x_2]$ is PD, that is $\Sigma = \EE\sbr{[x_1 \quad x_2]^{\top}[x_1 \quad x_2]}$ is PD. Our goal is to recover $w_\star$ by accessing tuples of $\cbr{y,x_1,x_2}$, and, to achieve improved rates relatively to estimation of the entire vector $[w_\star\quad e_\star]$.

Observe that would $x_1,x_2$ be uncorrelated, LS regression of $y$ given $x_1$ achieves our goal. With this observation, a natural first step would be to orthogonalize the model as we now show. Since $x=(x_1,x_2)$ is normally distributed it holds that 
\begin{align}
    \EE[x_1|x_2] = \Sigma_{12}\Sigma_{2}^{-1} x_2 \equiv L_\star x_2, \label{eq: model relation x1 to x2}
\end{align}
where $L_\star\in \mathbb{R}^{d_w \times d_e}$, from which we get, by linearity of expectation, that
\begin{align}
    \EE[y|x_2] = \inner{w_\star}{L_\star x_2} + \inner{e_\star}{x_2} \equiv \inner{c_\star}{x_2}, \label{eq: nuisance paratmeter conditional mean of y}
\end{align}
where $c_\star = L_\star^Tw_\star + e_\star$. Using this, the model~\eqref{eq: ls with nuisnace parameter for linear model} can be written as follows.
\begin{align}
    y &= \inner{w_\star}{x_1} + \inner{e_\star}{x_2} +\epsilon \nonumber \\
    & = \inner{w_\star}{(x_1 - L_\star x_2)} + \inner{c_\star}{x_2}  +\epsilon. \label{eq: nuisnace parameter general model equivalent model assumption}
\end{align}
Unlike in~\eqref{eq: ls with nuisnace parameter for linear model} where the features are not orthogonal $\EE[x_1x_2^{\top}] = \Sigma_{12}\Sigma_{2}^{-1} \neq 0$, in this new representation, the features are orthogonal since
\begin{align}
    \EE[(x_1 - L_\star x_2)x_2^{\top}] =0, \label{eq: appendix orthogonal features}
\end{align}
by construction. Thus, if we define $z_1 = x_1 - L_\star x_2$ and $z_2 = x_2$ we get that~\eqref{eq: nuisnace parameter general model equivalent model assumption} is given by 
\begin{align*}
    y = \inner{w_\star}{z_1} + \inner{c_\star}{z_2}  +\epsilon,
\end{align*}
where $z_1,z_2$ are orthogonal and their covariance matrix  is given by
\begin{align}
    &\cov_{z_1} = \EE[(x_1 - L_\star x_2)(x_1 - L_\star x_2)^{\top}] = \Sigma_{1} - \Sigma_{12}\Sigma_{2}^{-1}\Sigma_{12}^{\top} \equiv \Sigma /\Sigma_{2},\nonumber \\
    &\cov_{z_2} = \Sigma_2 \label{eq: covariance matrix of orthogonalized model},
\end{align}
where $\Sigma/\Sigma_2$ is the known as the Schur complement.

Importantly, would we be given $L_\star$ and $c_\star$, we can get an unbiased estimate of $w_\star$ using the data set $\cbr{(y_n,x_{1,n},x_{2,n})}_{n=1}^N$ through an ordinary least-squares approach,
\begin{align}
    \widehat{w}=\rbr{\sum_{n=N+1}^{2N} (x_{1,n} -  L_\star x_{2,n})(x_{1,n} -  L_\star x_{2,n})^{\top}}^\dagger \rbr{\sum_{n=N+1}^{2N} \rbr{y_n - \inner{ c_\star }{x_{2,n}}} (x_{1,n} -  L_\star x_{2,n})}
. \label{eq: ols exact case nuisance parameters}
\end{align}
It can be shown that $\EE[\widehat{w}]=w_\star$ when the design matrix $V_N = \sum_{n=1}^N (x_{1,n}-L_\star x_{2,n}) (x_{1,n}-L_\star x_{2,n})^{\top}$ is PD. This fact, motivates us to study the \emph{finite} sample performance of this approach when both $L_\star$ and $c_\star$ are estimated from data (\pref{alg: semi parametric regression}). In the next section, we study this estimator without any assumption besides of positive minimal eigenvalue of the covariance matrix of $x=(x_1,x_2).$ 

\subsubsection{Finite Sample Analysis: Semiparametric LS} \label{app: finite sample analysis, semi parametric LS}

We are now ready to analyze the performance of \pref{alg: semi parametric regression}. Relaying on the OLS~\eqref{eq: ols exact case nuisance parameters}, \pref{alg: semi parametric regression} splits the data in two, with the first dataset it estimates $L_\star$ and $c_\star$. With the second dataset, it solves the OLS~\eqref{eq: ols exact case nuisance parameters} in which the exact $L_\star$ and $c_\star$ are replaced by their estimators.

The following lemma establishes a finite performance guarantee of \pref{alg: semi parametric regression}. Importantly, we see that there's only a lower order dependence in $d_e$ which we suffer due to the need to estimate $L_\star$ and~$c_\star$. 

\theoremDiscViaNuisanceLS*
\paragraph{Overview of the analysis of~\pref{prop: semiparameteric least model sample complexity}}. We decompose the error into three terms in~\eqref{eq: central term central theorem nuisnace full analysis}. The first term is of dimension $s$ (as oppose to $d$) and is bounded via standard concentrations for least-squares~\cite{hsu2012tail}. The second and third terms are errors we suffer due to in-exact estimation of $L_\star$ and $c_\star$.

Importantly, we bound the errors in the estimates of $L_\star$ and $c_\star$ in weighted norms. Specifically, we show we can bound
\begin{align*}
    \norm{\Sigma/ \Sigma_{2}^{-1/2}(\widehat{L} - L_\star)\Sigma^{1/2}_2}_{\op} \textit{ and } \norm{\Sigma_{2}^{1/2}(\widehat{c} - c_\star)}_{2}
\end{align*}
by a term which is independent of minimal eigenvalues of $\Sigma$ or $\Sigma / \Sigma_2$. With this at hand, and by further careful analysis, we show, that the second and third terms in~\eqref{eq: central term central theorem nuisnace full analysis} can be bounded by terms that are independent of minimal eigenvalues. The final result follows by relating the minimal and eigenvalues of $\Sigma/\Sigma_2$ to the of $\Sigma$, supplied in~\cite{smith1992some}.

\begin{proof}
The OLS solution $\widehat{w}$ satisfies the following relation
\begin{align}
    &\sum_{n=N+1}^{2N} \frac{1}{N}\sbr{\rbr{x_{1,n}-\widehat{L}x_{2,n}}\rbr{x_{1,n}-\widehat{L}x_{2,n}} }^{\top}(w_\star - \widehat{w}) \nonumber\\
    &=\sum_{n=N+1}^{2N} \frac{1}{N}\rbr{x_{1,n}-\widehat{L}x_{2,n}}\rbr{y_n - \inner{w_\star}{x_{1,n}-\widehat{L}x_{2,n}} - \inner{\widehat{c}}{x_{2,n}}}. \label{eq: central lemma orthogonalied ls relation 1}
\end{align}
Let $$\widehat{z}_{1,n}(\widehat{L}) = x_{1,n}-\widehat{L}x_{2,n},\ z_{1,n} = x_{1,n}-L_{\star}x_{2,n},\ z_{2,n} = x_{2,n}$$ and define the design matrix as $$V_N = \frac{1}{N}\sum_{n=N+1}^{2N} \widehat{z}_{1,n}(\widehat{L})  \widehat{z}_{1,n}(\widehat{L}) ^{\top}.$$ By multiplying both sides of this relation by $\Sigma/\Sigma_2(\widehat{L})^{-1/2} = \EE[\widehat{z}_{1,n}(\widehat{L}) \widehat{z}_{1,n}(\widehat{L}) ^{\top}]$ and by some additional algebraic manipulations, it can be shown that~\eqref{eq: central lemma orthogonalied ls relation 1} implies that
\begin{align}
    &\Sigma/\Sigma_2(\widehat{L})^{-1/2} V_N (w_\star - \widehat{w}) \nonumber \\
    &= \Sigma/\Sigma_2(\widehat{L})^{-1/2}\sum_{i=1}^N\frac{1}{N} \rbr{x_{1,n}-\widehat{L}x_{2,n}}\rbr{y_i - \inner{w_\star}{x_{1,n}-L_\star x_{2,n}} - \inner{c_\star}{x_{2,n}}} \nonumber\\
    &\quad +\Sigma/\Sigma_2(\widehat{L})^{-1/2}\sum_{i=1}^N \frac{1}{N}\rbr{x_{1,n}-L_\star x_{2,n}}\inner{(c_\star - \widehat{c}) + (\widehat{L}-L_\star)^{\top} w_\star}{x_{2,n}} \nonumber \\
    & \quad + \Sigma/\Sigma_2(\widehat{L})^{-1/2}\sum_{i=1}^N \frac{1}{N}\rbr{(\widehat{L}-L_\star )x_{2,n}}\inner{(c_\star - \widehat{c}) + (\widehat{L}-L_\star)^{\top} w_\star}{x_{2,n}}\nonumber\\
    &= \Sigma/\Sigma_2(\widehat{L})^{-1/2}\sum_{i=1}^N\frac{1}{N} \widehat{z}_{1,n}(\widehat{L})  \epsilon_n \nonumber\\
    &\quad +\Sigma/\Sigma_2(\widehat{L})^{-1/2}\sum_{i=1}^N \frac{1}{N}z_{1,n}\inner{(c_\star - \widehat{c}) + (\widehat{L}-L_\star)^{\top} w_\star}{z_{2,n}} \nonumber \\
    & \quad + \Sigma/\Sigma_2(\widehat{L})^{-1/2}\sum_{i=1}^N \frac{1}{N}\rbr{(\widehat{L}-L_\star )z_{2,n}}\inner{(c_\star - \widehat{c}) + (\widehat{L}-L_\star)^{\top} w_\star}{z_{2,n}}\label{eq: nuisance parameter approach}
\end{align}
where the last relation holds since ${y_i - \inner{w_\star}{x_{1,n}-L_\star x_{2,n}} - \inner{c_\star}{x_{2,n}} = \epsilon_n}$ due to the model assumption~\eqref{eq: nuisnace parameter general model equivalent model assumption}. Observe we obtained a vector equality of the form
\begin{align*}
    a = b_1 + b_2 + b_3
\end{align*}
where $a,b_1,b_2,b_3\in \mathbb{R}^{d_w}$. This equality implies that
\begin{align}
    \norm{a}_2 = \norm{b_1 + b_2 + b_3}_2\leq \norm{b_1}_2 + \norm{b_2}_2 + \norm{b_3}_2 \label{eq: nuisance squared L2 convexity consequnce}
\end{align}
due to the triangle inequality. Hence, the vector equality in~\eqref{eq: nuisance parameter approach} together with \eqref{eq: nuisance squared L2 convexity consequnce} implies that
\begin{align}
    &\norm{\Sigma/\Sigma_2(\widehat{L})^{-1/2} V_N(\widehat{w}-w_\star)}_2 \nonumber \\
    &=\underbrace{\norm{\sum_{i=1}^N\frac{1}{N} \widehat{z}_{1,n}(\widehat{L})  \epsilon_n}_{\Sigma/\Sigma_2(\widehat{L})^{-1}} }_{(i)} \nonumber \\
    &+\underbrace{\norm{\sum_{i=1}^N \frac{1}{N}z_{1,n}\inner{(c_\star - \widehat{c}) + (\widehat{L}-L_\star)^{\top} w_\star}{z_{2,n}}}_{\Sigma/\Sigma_2(\widehat{L})^{-1} } }_{(ii)}\nonumber\\
    &+\underbrace{\norm{\sum_{i=1}^N \frac{1}{N}\rbr{(\widehat{L}-L_\star )z_{2,n}}\inner{(c_\star - \widehat{c}) + (\widehat{L}-L_\star)^{\top} w_\star}{z_{2,n}} }_{\Sigma/\Sigma_2(\widehat{L})^{-1} }}_{(iii)}. \label{eq: central term central theorem nuisnace full analysis}
\end{align}
We bound each one of these terms by \pref{lem: ls nuisance 3 terms, term 1},  \pref{lem: ls nuisance 3 terms, term 2} and  \pref{lem: ls nuisance 3 terms, term 3}. We verify the conditions of these lemmas hold.
\begin{enumerate}
    \item $N\geq 9 \gamma_{d,\delta}^2\geq  \gamma_{d,\delta}$, by assumption and since $\delta\in (0,e^{-1})$ (see that $9 \gamma_{d,\delta}^2=\Theta(d\log(\frac{1}{\delta}))$).
    \item By \pref{lem: sample complexity c star}
        \begin{align*}
            &\norm{\Sigma_{2}^{1/2}((c_\star - \widehat{c}) + (\widehat{L}-L_\star)^{\top} w_\star)} \\
            &\leq \norm{\Sigma_{2}^{1/2}((c_\star - \widehat{c})}_2 \norm{\Sigma_{2}^{1/2}(\widehat{L}-L_\star)^{\top}\Sigma/\Sigma_{2}^{-1/2}}_{\op} \norm{\Sigma_{2}^{1/2}w_\star}_2\\
            &\leq 10\sigma_c \sqrt{\frac{d d_w \log\rbr{\frac{d_w}{\delta}}}{N}} = \Delta
        \end{align*}
        and  by \pref{lem: sample complexity L star} 
        \begin{align*}
            \norm{\rbr{\Sigma / \Sigma_2}^{-1/2}(\widehat{L} - L_\star)\Sigma_2^{1/2} }_{\op}\leq 5\sigma_c \sqrt{\frac{d d_w \log\rbr{\frac{d_w}{\delta}}}{N}} =\Delta_L = \Delta/2.
        \end{align*}
    with probability greater than $1-\delta$. Thus, we approximate $c_\star,L_\star$ in the scaled norms by the covariance matrices $\Sigma_2$ and $\Sigma / \Sigma_2$.
    \item Since $\Delta_L^2= 25\sigma^2_cd d_w \log\rbr{\frac{d_w}{\delta}}/N$ it holds that for $N\geq 50\sigma^2_cd d_w \log\rbr{\frac{d_w}{\delta}} /\lambda_{\min}(\Sigma)$ the covariance matrix $\Sigma/\Sigma_{2}(\widehat{L})$ is PD, and specifically, 
    \begin{align}
        \lambda_{\min}(\Sigma/\Sigma_{2}(\widehat{L})) \geq \lambda_{\min}(\Sigma/\Sigma_{2})/2 >   \lambda_{\min}(\Sigma)/2  >0, \label{eq: approximated covariance is PD}
    \end{align}
    where the first relation holds by \pref{lem: lower bound on the perturbed least square} while setting $\Delta_L^2=\lambda_{\min}(\Sigma/\Sigma_2)/2$, and the second relation by standard fact on the Schur complement of a matrix (see~\cite{smith1992some}, Theorem 5).
    \item For $N\geq 50\sigma^2_cd d_w \log\rbr{\frac{d_w}{\delta}} /\lambda_{\min}(\Sigma/\Sigma_2)$ by the third relation of \pref{lem: lower bound on the perturbed least square} we get that\\ ${\norm{\rbr{\Sigma/\Sigma_{2}(\widehat{L})}^{-1/2} \Sigma/\Sigma_{2}^{1/2}}_{\op}\leq \sqrt{2}.}$
\end{enumerate}
Observe that by taking
\begin{align}
    N \geq  O\rbr{\rbr{\sigma^2_c/\lambda_{\min}(\Sigma) \vee 1}d d_w \log\rbr{\frac{d_w}{\delta}}}, \label{eq: N minimal value semi parametric}
\end{align}
we satisfy all the requirements on the sample size.

Applying the union bound on all the above and scaling $\delta\gets \delta/3$ we get that all the events hold with probability greater than $1-\delta$. We refer to this event as the first good event $\Gcal_1$. We can now apply \pref{lem: ls nuisance 3 terms, term 1},  \pref{lem: ls nuisance 3 terms, term 2} and  \pref{lem: ls nuisance 3 terms, term 3} and bound~\eqref{eq: central term central theorem nuisnace full analysis} conditioning on $\Gcal_1$. By applying these lemmas and using the union bound we get that with probability greater than $1-\delta$
\begin{align}
    &\norm{\Sigma/\Sigma_2(\widehat{L})^{-1/2} V_N(\widehat{w}-w_\star)}_2 \nonumber \\
    &\leq 3\sigma \sqrt{\frac{d_w\log\rbr{\frac{6}{\delta}}}{N}} + \frac{5\Delta \gamma_{d,\delta/9}}{\sqrt{N}} + \frac{5\Delta^2/2 \gamma_{d,\delta/9}}{\sqrt{N}}  + \Delta^2 \nonumber \\
    &\leq 3\sigma \sqrt{\frac{d_w\log\rbr{\frac{6}{\delta}}}{N}} + 50 \gamma_{d,\delta/9}\sigma_c\sqrt{\frac{d_wd\log\rbr{\frac{6d_w}{\delta}}}{N^2}} + \frac{250 \gamma_{d,\delta/9}\sigma_c^2d d_w \log\rbr{\frac{6d_w}{\delta}}}{N^{3/2}}  +\frac{100\sigma^2_c d d_w \log\rbr{\frac{6d_w}{\delta}}}{N}\label{eq: w widehat c widehat L widehat central bound final}
\end{align}
by plugging the form of $\Delta$, $\Delta_L$ and using $d_e+d_w = d$.

Finally, we translate this bound to a bound on $\norm{\widehat{w}-w_\star}_{\Sigma/\Sigma_2(\widehat{L})}$ by applying \pref{lem: translating empirical to expected performance}. We now verify the conditions of this lemma.
\begin{enumerate}
    \item The matrix $\Sigma/\Sigma_2(\widehat{L})$ is PD by~\eqref{eq: approximated covariance is PD}.
    \item The empirical covariance is concentrated around the true one,
    \begin{align}
        &\norm{\rbr{\Sigma/ \Sigma_2(\hat{L})}^{-1/2}V_N \rbr{\Sigma/ \Sigma_2(\hat{L})}^{-1/2}- I }_{\op} \leq \frac{\gamma_{d_e,\delta}}{\sqrt{N}}\leq \frac{1}{3}, \label{eq: central theorem application of covariance matrix concentration}
    \end{align}
    with probability greater than $1-\delta$ by \pref{lem: covariance estimation sub gaussian}, and the second inequality holds since $N\geq 9 \gamma^2_{d,\delta}\geq9 \gamma^2_{d_e,\delta}.$
\end{enumerate}
Applying  \pref{lem: translating empirical to expected performance} with $c=1/3$, while using the bound in~\eqref{eq: w widehat c widehat L widehat central bound final} we get
\begin{align*}
    &\norm{(\widehat{w}-w_\star)}_{\Sigma/\Sigma_2(\widehat{L})} \\
    &\leq 6\sigma \sqrt{\frac{d_w\log\rbr{\frac{6}{\delta}}}{N}} + \frac{80 \gamma_{d,\delta/9}\sigma_c\sqrt{d_wd\log\rbr{\frac{6d_w}{\delta}}}}{N} + \frac{400 \gamma_{d,\delta/9}\sigma_c^2d d_w \log\rbr{\frac{6d_w}{\delta}}}{N^{3/2}}  + \frac{150\sigma^2_cd d_w \log\rbr{\frac{6d_w}{\delta}}}{N}.
\end{align*}
Furthermore, observe that $$\lambda_{\min}(\Sigma/\Sigma_2(\widehat{L})) \geq  \frac{1}{2}\lambda_{\min}(\Sigma/\Sigma_2)\geq \lambda_{\min}(\Sigma)$$ where the first relation holds by, and the second by identifies of the Schur complement of a PD matrix~\eqref{eq: approximated covariance is PD} and the second relation by~\cite{smith1992some}, Theorem 5. Thus,
\begin{align}
    &\norm{(\widehat{w}-w_\star)}_2\leq \frac{\sqrt{2}}{\sqrt{\lambda_{\min}(\Sigma)}}\norm{(\widehat{w}-w_\star)}_{\Sigma/\Sigma_2(\widehat{L})} \nonumber \\
    &\leq \frac{1}{\sqrt{\lambda_{\min}(\Sigma)}}\rbr{9 \sqrt{\frac{\sigma^2 d_w\log\rbr{\frac{1}{\delta}}}{N}} +  \frac{400 \gamma_{d,\delta/9}\sigma_c^2d d_w \log\rbr{\frac{d_w}{\delta}}}{N^{3/2}} +\frac{230\rbr{\sigma^2_c \vee  \sigma_c} d d_w \log\rbr{\frac{9d_w}{\delta}}}{N}} \label{eq: final bound semi parametric LS}.
\end{align}
Lastly, by the choice of $N$ given in~\eqref{eq: N minimal value semi parametric} and the definition of $\gamma_{d,\delta/9} = O(\sqrt{d \log\rbr{\frac{1}{\delta}}})$ (see \pref{lem: covariance estimation sub gaussian}) it holds that, 
\begin{align*}
    \frac{\gamma_{d,\delta}}{\sqrt{N}}\leq O(1).
\end{align*}
Thus, the last two term of~\eqref{eq: final bound semi parametric LS} are related by a multiplicative constant factor. This concludes the proof.
\end{proof}

\subsubsection{Analysis of the First Phase Errors}


\begin{lemma}[Sample Complexity of Learning $c_\star$]\label{lem: sample complexity c star}
Let $\delta\in(0,e^{-1})$ and let $\Sigma_{2} = \EE[x_2x_2^{\top}]$. Assume that $N\geq 9\gamma^2_{d_e,\delta}$. Then, with probability greater than $1-\delta$ it holds that
\begin{align*}
    \norm{\Sigma_{2}^{1/2}(\widehat{c} - c_\star)}_{2} \leq 5\sigma_c\sqrt{\frac{ d_e \log\rbr{\frac{2}{\delta}}}{N}},
\end{align*}
where $\sigma_c^2=  \norm{w_\star}_{\Sigma/\Sigma_{2}}^2 + \sigma^2$.
\end{lemma}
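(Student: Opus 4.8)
The plan is to analyze the ordinary least-squares estimator $\widehat{c} = \rbr{\sum_n x_{2,n} x_{2,n}^\top}^\dagger \rbr{\sum_n y_n x_{2,n}}$ for the regression of $y$ onto $x_2$. By the model orthogonalization in~\eqref{eq: nuisance paratmeter conditional mean of y}, the population regression coefficient is exactly $c_\star = L_\star^\top w_\star + e_\star$, since $\EE[y \mid x_2] = \inner{c_\star}{x_2}$. First I would write out the standard least-squares identity: letting $\Sigma_{2,N} = \frac1N \sum_n x_{2,n} x_{2,n}^\top$ be the empirical covariance, the estimator satisfies
\begin{align*}
    \Sigma_{2,N}\,(\widehat{c} - c_\star) = \frac1N \sum_n x_{2,n}\,\eta_n,
\end{align*}
where $\eta_n := y_n - \inner{c_\star}{x_{2,n}}$ is the regression residual. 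The key observation is that by construction $\eta_n = \inner{w_\star}{x_{1,n} - L_\star x_{2,n}} + \xi_n$, which is a zero-mean noise term uncorrelated with $x_{2,n}$ (using~\eqref{eq: appendix orthogonal features}), and whose conditional sub-Gaussian parameter is controlled by $\sigma_c^2 = \norm{w_\star}_{\Sigma/\Sigma_2}^2 + \sigma^2$ — precisely because $\mathrm{Var}(\inner{w_\star}{z_1}) = \norm{w_\star}_{\Sigma/\Sigma_2}^2$ by~\eqref{eq: covariance matrix of orthogonalized model} and $\xi$ contributes $\sigma^2$.

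**Reducing to a whitened problem and concentrating.** The quantity to bound is $\norm{\Sigma_2^{1/2}(\widehat{c}-c_\star)}_2$, a covariance-weighted error, which is natural because it is invariant to the conditioning of $\Sigma_2$. I would left-multiply the least-squares identity by $\Sigma_2^{-1/2}$ and insert $\Sigma_2^{1/2}\Sigma_2^{-1/2}$ to get
\begin{align*}
    \Sigma_2^{1/2}(\widehat{c}-c_\star) = \rbr{\Sigma_2^{-1/2}\Sigma_{2,N}\Sigma_2^{-1/2}}^{-1} \cdot \Sigma_2^{-1/2}\rbr{\frac1N\sum_n x_{2,n}\eta_n}.
\end{align*}
The second factor is a self-normalized martingale/cross-correlation term in the whitened coordinates $\Sigma_2^{-1/2}x_{2,n}$ (isotropic Gaussian) against the sub-Gaussian residuals $\eta_n$; its norm concentrates at rate $\sigma_c\sqrt{d_e \log(1/\delta)/N}$ via a standard vector Bernstein / self-normalized bound such as~\cite{hsu2012tail} (the same machinery underlying \pref{lem: bound on cross correlation}). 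For the first factor, I would invoke the empirical-covariance concentration result (\pref{lem: covariance estimation sub gaussian}): under the hypothesis $N \ge 9\gamma_{d_e,\delta}^2$, the whitened empirical covariance satisfies $\norm{\Sigma_2^{-1/2}\Sigma_{2,N}\Sigma_2^{-1/2} - I}_{\op} \le \gamma_{d_e,\delta}/\sqrt{N} \le 1/3$, so its inverse has operator norm at most $3/2$. Multiplying the two bounds and absorbing the constant $3/2$ into the numerical constant $5$ yields the claim.

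**Main obstacle and bookkeeping.** The main technical point is correctly identifying the effective noise scale of the residual $\eta_n$, and in particular verifying that $\inner{w_\star}{x_1 - L_\star x_2}$ is genuinely uncorrelated with $x_2$ so that it can be folded into a sub-Gaussian cross-correlation bound rather than producing a bias term; this is exactly what~\eqref{eq: appendix orthogonal features} guarantees, and the variance bookkeeping through~\eqref{eq: covariance matrix of orthogonalized model} is what produces $\sigma_c^2 = \norm{w_\star}_{\Sigma/\Sigma_2}^2 + \sigma^2$. A secondary care point is that $w_\star$ appears inside $\eta_n$, so the residual depends on the unknown parameter; however since we only need its sub-Gaussian norm (a fixed population quantity), this causes no circularity. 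The remaining work — threading the $\delta$ into the two union-bounded events, checking $\delta \in (0,e^{-1})$ so that $\log(1/\delta)$ absorbs lower-order terms, and tracking the explicit constant — is routine bookkeeping once the whitened decomposition above is in place.
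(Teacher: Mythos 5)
Your proposal is correct and follows essentially the same route as the paper: the paper likewise identifies the residual $\epsilon_{c,n}=y_n-\inner{c_\star}{x_{2,n}}=\inner{w_\star}{x_{1,n}-L_\star x_{2,n}}+\xi_n$ as zero-mean and $\sigma_c$-sub-Gaussian with $\sigma_c^2=\norm{w_\star}_{\Sigma/\Sigma_2}^2+\sigma^2$, and then invokes its generic OLS guarantee (\pref{prop: regularized least square general result}), whose internal proof is exactly your whitened decomposition — the Hsu--Kakade--Zhang tail bound for the cross term plus empirical-covariance concentration for the design, combined via \pref{lem: translating empirical to expected performance}. The only difference is that you inline the OLS analysis rather than citing it as a black box.
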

\begin{proof}
This result is a direct application of \pref{prop: regularized least square general result} which establishes performance guarantee on the OLS. We show that $y= \inner{c_\star}{x_2} + \epsilon$ to apply this result. See that $$y_n = \EE[y|x_{2,n}] + \EE[y|x_{2,n}] - y_n= \inner{c_\star}{x_2} +\epsilon_{c,n}$$
where $\epsilon_{c,n} = y_n - \inner{c_\star}{x_{2,n}}$ is a $\sigma_c = \sqrt{\norm{w_\star}_{\Sigma/\Sigma_{2}}^2 + \sigma^2}$ sub gaussian, zero mean random variable. Indeed, 
\begin{align*}
    &\EE[\epsilon_{c,n}] = \EE[y_n - \inner{c_\star}{x_{2,n}}] = \EE[y_n - \EE[y_n|x_{2,n}]] =0.
\end{align*}
To see it is a $\sigma_c$ sub gaussian observe that $$\epsilon_{c,n} = y_n - \inner{c_\star}{x_{2,n}} = \norm{x_{1,n} - L_\star x_{2,n},w_\star} + \epsilon_n.$$
Thus, and due to the independence of $\epsilon$ and $(x_1,x_2)$ we get
\begin{align*}
    &Var(\epsilon_{c,n}) = Var(\inner{x_{1,n} - L_\star x_{2,n}}{w_\star}) +Var(\epsilon_n) \\
    &= Var(\inner{\rbr{\Sigma/\Sigma_{2}}^{-1/2}(x_{1,n} - L_\star x_{2,n})}{\rbr{\Sigma/\Sigma_{2}}^{1/2}w_\star}) + \sigma^2\\
    &=\norm{\Sigma/\Sigma_{2}^{1/2}w_\star}^2 + \sigma^2 = \norm{w_\star}_{\Sigma/\Sigma_{2}}^2 + \sigma^2.
\end{align*}
Thus, the claim follows from \pref{prop: regularized least square general result}.
\end{proof}

\begin{lemma}[Sample Complexity of Learning $L_\star$]\label{lem: sample complexity L star}
Let $\delta\in(0,e^{-1})$ and let $\Sigma_{2} = \EE[x_2x_2^{\top}]$. Assume that $N\geq 9\gamma^2_{d_e,\delta}$. Let the OLS estimate of $L_\star$ be
\begin{align*}
    \widehat{L} = \frac{1}{N_1} \sum_{n=1}^{N_1} x_{1,n}x_{2,n}^{\top} V_{N_1,x_2}^{\dagger}.
\end{align*}
Then, with probability greater than $1-\delta$ it holds that
\begin{align*}
    \norm{\Sigma/ \Sigma_{2}^{-1/2}(\widehat{L} - L_\star)\Sigma^{1/2}_2}_{\op} \leq 5 \sqrt{\frac{ d_w d_e \log\rbr{\frac{2d_w}{\delta}}}{N}}.
\end{align*}
\end{lemma}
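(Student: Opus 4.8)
The estimator $\widehat{L}$ is the (multi-output) ordinary least-squares regression of $x_1$ onto $x_2$, and the quantity to control is the error matrix measured in the weighted operator norm $\norm{\rbr{\Sigma/\Sigma_2}^{-1/2}(\widehat{L}-L_\star)\Sigma_2^{1/2}}_{\op}$. The plan is to reduce this matrix estimation problem to $d_w$ essentially independent \emph{scalar} regressions, one per row, and then to invoke the scalar OLS guarantee \pref{prop: regularized least square general result} that already powers the companion lemma for $c_\star$ (\pref{lem: sample complexity c star}). This keeps the proof structurally identical to that lemma, with the matrix structure handled only by a union bound over rows and a Frobenius-norm comparison.

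First I would recall the decomposition $x_1 = L_\star x_2 + z_1$ with $z_1 := x_1 - L_\star x_2$, whose covariance is the Schur complement $\Sigma/\Sigma_2$ by \eqref{eq: covariance matrix of orthogonalized model}, and whose conditional-mean interpretation $\EE[x_1|x_2]=L_\star x_2$ is \eqref{eq: model relation x1 to x2}. The key point is that $\cov(z_1,x_2)=\Sigma_{12}-L_\star\Sigma_2=0$, so under joint Gaussianity $z_1$ is \emph{independent} of $x_2$. Consequently, $e_i^\top L_\star$ is exactly the population regression coefficient of the scalar output $x_1(i)$ onto the design $x_2$, with i.i.d. residual noise $z_1(i)$ that is independent of the design.

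The central reduction is to whiten the response by $\rbr{\Sigma/\Sigma_2}^{-1/2}$. Since OLS is linear in the response, replacing $x_{1,n}$ by $\tilde{x}_{1,n}:=\rbr{\Sigma/\Sigma_2}^{-1/2}x_{1,n}$ turns the estimate into $\rbr{\Sigma/\Sigma_2}^{-1/2}\widehat{L}$ and the truth into $\rbr{\Sigma/\Sigma_2}^{-1/2}L_\star$, so the $i$-th row of $\rbr{\Sigma/\Sigma_2}^{-1/2}(\widehat{L}-L_\star)$ is precisely the OLS error of regressing the whitened coordinate $\tilde{x}_1(i)=e_i^\top\rbr{\Sigma/\Sigma_2}^{-1/2}x_1$ onto $x_2$. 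The residual of that scalar regression is $e_i^\top\rbr{\Sigma/\Sigma_2}^{-1/2}z_1$, and because $\rbr{\Sigma/\Sigma_2}^{-1/2}z_1$ has identity covariance and is independent of $x_2$, this noise is zero-mean, independent of the design, and $1$-sub-Gaussian. I can therefore apply \pref{prop: regularized least square general result} to each of the $d_w$ rows with sub-Gaussian parameter $\sigma=1$, design covariance $\Sigma_2$ (which is PD as a principal submatrix of the PD matrix $\Sigma$), and the sample-size condition $N\geq 9\gamma_{d_e,\delta}^2$; with failure probability $\delta/d_w$ per row this gives $\norm{e_i^\top\rbr{\Sigma/\Sigma_2}^{-1/2}(\widehat{L}-L_\star)\Sigma_2^{1/2}}_2 \leq 5\sqrt{d_e\log(2d_w/\delta)/N}$, where the $\log(2d_w/\delta)$ arises from substituting $\delta\gets\delta/d_w$ into the $c_\star$-style bound.

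Finally I would pass from the operator norm to the Frobenius norm and aggregate the rows under a union bound:
\begin{align*}
\norm{\rbr{\Sigma/\Sigma_2}^{-1/2}(\widehat{L}-L_\star)\Sigma_2^{1/2}}_{\op} \leq \norm{\rbr{\Sigma/\Sigma_2}^{-1/2}(\widehat{L}-L_\star)\Sigma_2^{1/2}}_F \leq \sqrt{\sum_{i=1}^{d_w} 25\,\frac{d_e\log(2d_w/\delta)}{N}} = 5\sqrt{\frac{d_w d_e\log(2d_w/\delta)}{N}},
\end{align*}
which is the claimed bound with the exact constant. The delicate steps are (i) checking that whitening the response commutes with the linear OLS map, so that each row becomes a genuine unit-variance scalar regression with the prescribed design, and (ii) justifying the zero-covariance-implies-independence step, which is valid only under the joint-Gaussian assumption and is what makes the residual independent of the design so that the scalar lemma applies verbatim; the concentration and the Frobenius/union-bound bookkeeping are then routine.
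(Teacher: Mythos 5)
Your proposal is correct and follows essentially the same route as the paper: bound the operator norm by the Frobenius norm, whiten the response by $\rbr{\Sigma/\Sigma_2}^{-1/2}$ so that each row of the error matrix becomes the error of a scalar OLS regression with unit-variance residual noise, apply \pref{prop: regularized least square general result} to each of the $d_w$ rows with a union bound, and sum the squared row bounds. Your explicit justification that zero cross-covariance plus joint Gaussianity makes the residual independent of the design is a point the paper's proof leaves implicit, but it does not change the argument.
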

\begin{proof}
We apply the concentration result on the OLS estimator, \pref{prop: regularized least square general result}. To see it is applicable, we reduce this problem to a single parameter estimation. First, bound the operator norm by the Frobenius norm. Let $e_i\in \mathbb{R}^{d_w}$ be a one hot vector with one at its $i^{th}$ entry. Then,
\begin{align}
    &\norm{\Sigma/ \Sigma_{2}^{-1/2}(\widehat{L} - L_\star)\Sigma^{1/2}_2}^2_{\op} \leq \norm{\Sigma/ \Sigma_{2}^{-1/2}(\widehat{L} - L_\star)\Sigma^{1/2}_2}^2_{F} = \sum_{i=1}^{d_w} \norm{e_i^{\top}\rbr{\Sigma/ \Sigma_{2}^{-1/2}(\widehat{L} - L_\star)\Sigma^{1/2}_2}}_2^2 \label{eq: L widehat bound first phase relation 1}.
\end{align}
Observe that the following vector equality holds by the model assumption~\eqref{eq: model relation x1 to x2}.
\begin{align}
    x_1 = \EE[x_1|x_2]  + x_1 - \EE[x_1|x_2] = L_\star x_2 + (x_1 - L_\star x_2), \label{eq: Lstar error propogation relation 1 5}
\end{align}
where $\EE[x_1 - L_\star x_2]=0,\ \cov(x_1 - L_\star x_2) = \Sigma / \Sigma_{2}$ (see~\eqref{eq: covariance matrix of orthogonalized model}). Multiplying~\eqref{eq: Lstar error propogation relation 1 5} from the left by $e_i^{\top}\rbr{\Sigma / \Sigma_{2}}^{-1/2}$, we get that for any $i\in [d_w]$
\begin{align*}
    y_{n,i} \equiv e_i^{\top}\rbr{\Sigma / \Sigma_{2}}^{-1/2}x_{1,n} = e_i^{\top}\rbr{\Sigma / \Sigma_{2}}^{-1/2} L_\star x_2 + e_i^{\top}\epsilon_n = \inner{\beta_i}{x_{2,n}} + \epsilon_{n,i},
\end{align*}
where 
\begin{align*}
    &\beta_i =e_i^{\top}\rbr{\Sigma / \Sigma_{2}}^{-1/2} L_\star,\\
    &\epsilon_{n,i} = e_i^{\top}\rbr{\Sigma / \Sigma_{2}}^{-1/2}(x_1 - L_\star x_2),  
\end{align*}
and $\epsilon_{n,i}$ is zero mean with a unit variance, since,
\begin{align}
    &\EE[\epsilon_{n,i}] = e_i^{\top}\rbr{\Sigma / \Sigma_{2}}^{-1/2}\EE[x_1 - L_\star x_2]=0 \nonumber \\
    &Var(\epsilon_n) = e_i^{\top}\rbr{\Sigma / \Sigma_{2}}^{-1/2}\rbr{\Sigma / \Sigma_{2}}\rbr{\Sigma / \Sigma_{2}}^{-1/2}e_i= 1. \label{eq: matrix estiamtion row with unit variance}
\end{align}
Observe that the ordinary least square estimator of $\beta_i^{\top}$ is given by the following equivalent forms
\begin{align*}
    \widehat{\beta}_i^{\top} &= \sum_{n=1}^N  y_{n,i}x_{2,n}^{\top} V_{N,2}^{\dagger}\\
    &=\sum_{n=1}^N e_i^{\top}\rbr{\Sigma / \Sigma_{2}}^{-1/2}x_{1,n}x_{2,n}^{\top} V_{N,2}^{\dagger}\\
    &=e_i^{\top}\rbr{\Sigma / \Sigma_{2}}^{-1/2} \widehat{L}_N.
\end{align*}
By applying  the concentration result for OLS, \pref{prop: regularized least square general result}, and applying the union bound, we get that for all $i\in [d_w]$, assuming $N\geq 9\gamma_{d,\delta}^2$
\begin{align*}
    \norm{e_i^{\top}\rbr{\Sigma / \Sigma_{2}}^{-1/2} (\widehat{L}_N-L_\star)} = \norm{ (\widehat{\beta}_i - \beta_i)^{\top}} \leq 5\sigma_{i}^2\frac{d_e\log\rbr{\frac{d_w}{\delta}}}{N}\leq 5\frac{d_e\log\rbr{\frac{d_w}{\delta}}}{N}.
\end{align*}
Thus,
\begin{align*}
    &\eqref{eq: L widehat bound first phase relation 1} = \sum_{i=1}^{d_w} \norm{e_i^{\top}\rbr{\Sigma/ \Sigma_{2}^{-1/2}(\widehat{L} - L_\star)\Sigma^{1/2}_2}}^2_2 \\
    &\leq  \sum_{i=1}^{d_w} \frac{ 25\sigma_{i,l}d_e \log\rbr{\frac{d_w}{\delta}}}{N}\\
    & =  \frac{ 25d_w d_e \log\rbr{\frac{d_w}{\delta}}}{N}, \tag{By~\eqref{eq: matrix estiamtion row with unit variance} $\sigma_{i,l}=1$ for all $i\in [d_w]$}
\end{align*}
which concludes the proof.
\end{proof}

\subsubsection{Analysis of the Second Phase Errors}

\begin{lemma}[Bound on First Term of~\pref{prop: semiparameteric least model sample complexity}]\label{lem: ls nuisance 3 terms, term 1} Let $\delta\in (0,e^{-1})$. Assume that $\Sigma/\Sigma_2(\widehat{L}) $ is invertible. Then, with probability greater then $1-\delta$ it holds that $$\norm{\frac{1}{N}\sum_{n} \rbr{x_{1,n}-\widehat{L}x_{2,n}}\epsilon_n}_{\Sigma/\Sigma_2(\widehat{L})^{-1} }\leq 3\sigma \sqrt{\frac{d_w\log\rbr{\frac{1}{\delta}}}{N}}.$$
\end{lemma}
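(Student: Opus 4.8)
The plan is to strip away the dependence between the plug-in matrix $\widehat{L}$ and the noise by conditioning, and then to \emph{whiten} the centered covariates so that the weighted norm becomes an ordinary self-normalized noise average. Since $\widehat{L}$ is estimated from the first half of the sample (\pref{alg: semi parametric regression}) while the sum here runs over the second half, I would condition on $\widehat{L}$. Conditionally, the summands $\widehat{z}_{1,n}(\widehat{L}) = x_{1,n} - \widehat{L}x_{2,n}$ are i.i.d.\ Gaussian vectors with covariance $\Sigma/\Sigma_2(\widehat{L})$, and the scalars $\epsilon_n$ are zero-mean $\sigma$-sub-Gaussian and independent of them (this is exactly the orthogonalized model~\eqref{eq: nuisnace parameter general model equivalent model assumption}, together with $\epsilon \perp (x_1,x_2)$). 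Setting $w_n = (\Sigma/\Sigma_2(\widehat{L}))^{-1/2}\,\widehat{z}_{1,n}(\widehat{L})$, each $w_n \sim \Ncal(0,I_{d_w})$ conditionally (using that $\Sigma/\Sigma_2(\widehat{L})$ is invertible by assumption), and the weighted norm collapses to a Euclidean norm:
$$\norm{\frac{1}{N}\sum_n \widehat{z}_{1,n}(\widehat{L})\,\epsilon_n}_{\Sigma/\Sigma_2(\widehat{L})^{-1}} = \norm{\frac{1}{N}\sum_n w_n \epsilon_n}_2.$$
It therefore suffices to bound $\norm{S}_2$ with $S = \frac{1}{N}\sum_n w_n \epsilon_n$, where the $w_n$ are standard Gaussian and the $\epsilon_n$ are independent $\sigma$-sub-Gaussian.

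For the concentration I would pass to the variational form $\norm{S}_2 = \sup_{\norm{u}_2=1}\inner{u}{S}$ and discretize the sphere with a $1/2$-net $\mathcal{C}$ of size at most $5^{d_w}$, so that $\norm{S}_2 \le 2\max_{u\in\mathcal{C}}\inner{u}{S}$. For a \emph{fixed} direction $u$, $\inner{u}{S} = \frac{1}{N}\sum_n \inner{u}{w_n}\epsilon_n$ is an average of i.i.d.\ products of independent sub-Gaussians (each $\inner{u}{w_n}\sim\Ncal(0,1)$), which is precisely the setting of \pref{lem: application of cross correlation}: taking $\sigma_x=1$, $\sigma_y=\sigma$ and $a=u$ gives $\abr{\inner{u}{S}} \le 3\sigma\,\gamma_{2,\delta'/3}/\sqrt{N}$ with probability $1-\delta'$. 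A union bound over the net with $\delta' = \delta/5^{d_w}$ replaces $\log(1/\delta')$ by $d_w\log 5 + \log(1/\delta)$ inside $\gamma_{2,\cdot}=O(\sqrt{\log(1/\cdot)})$; since $\delta\in(0,e^{-1})$ forces $\log(1/\delta)>1$, this is bounded by a constant multiple of $d_w\log(1/\delta)$, and absorbing constants (including the factor $2$ from the net) yields the stated $3\sigma\sqrt{d_w\log(1/\delta)/N}$. Because this conditional bound is uniform in the value of $\widehat{L}$, it also holds unconditionally.

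The step requiring the most care is the ordering of the randomness: the whitening is only legitimate because $w_n$ is genuinely independent of $\epsilon_n$, which is guaranteed by the sample-splitting (so that $\widehat{L}$ is fixed relative to the second-half draws) and by the model assumption $\epsilon \perp (x_1,x_2)$; without this the cross term would carry a bias and the bound would fail. A secondary point is the burn-in hypothesis $N \ge \gamma_{2,\delta'/3}$ of \pref{lem: application of cross correlation}, which after the net inflation reads $N \gtrsim \sqrt{d_w + \log(1/\delta)}$; this is comfortably implied by the sample-size requirement of the parent result \pref{prop: semiparameteric least model sample complexity}. As an alternative to the net argument, one could invoke the self-normalized least-squares tail bound of~\cite{hsu2012tail} directly on $(\Sigma/\Sigma_2(\widehat{L}))^{-1/2}\sum_n \widehat{z}_{1,n}(\widehat{L})\epsilon_n$, which produces the $\sigma\sqrt{(d_w+\log(1/\delta))/N}$ form; the same $\delta<e^{-1}$ simplification (valid even for $d_w=1$, the case relevant to entrywise estimation) then recovers the claimed inequality.
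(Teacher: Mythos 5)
Your proof is correct, but it takes a genuinely different route from the paper's. The paper rewrites the quantity as $\norm{\Sigma/\Sigma_2(\widehat{L})^{-1/2}Z_N^{\top}\xi_N}_2$ and invokes \pref{lem: useful concentration for OLS}, i.e.\ the self-normalized tail bound of \cite{hsu2012tail} applied through the projection $P_{X_N}=V_N^{1/2}(V_N^{\dagger})^{1/2}$; that argument delivers the $\sqrt{(d_w+\log(1/\delta))/N}$ rate in one black-box step but needs the empirical design to be well-conditioned, $\norm{\Sigma/\Sigma_2(\widehat{L})^{-1/2}V_N^{1/2}}_{\op}^2\leq 3/2$ --- a condition absent from the lemma's stated hypotheses and only verified later in the proof of \pref{prop: semiparameteric least model sample complexity}. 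You instead whiten the conditionally Gaussian covariates and run a $1/2$-net over the sphere with the scalar cross-correlation bound (\pref{lem: application of cross correlation}) plus a union bound over $5^{d_w}$ directions. Your conditioning on $\widehat{L}$ (justified by the sample split and $\epsilon\perp(x_1,x_2)$) and the de-conditioning step are both handled correctly, and your approach has the advantage of not requiring the design-matrix concentration event at all, only the mild burn-in $N\gtrsim\sqrt{d_w+\log(1/\delta)}$ inherited from \pref{lem: application of cross correlation}. The price is the constant: the net factor $2$, the factor $3$ from the cross-correlation lemma, and the $d_w\log 5$ term inflate the bound well past the literal $3\sigma\sqrt{d_w\log(1/\delta)/N}$ claimed in the statement, so strictly you prove the inequality only up to a larger absolute constant. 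Since the paper's own constant bookkeeping is similarly loose and only the order of the bound is used downstream, this does not affect the final result, but you should state the conclusion with an unspecified constant rather than the literal $3$.
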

\begin{proof}
This term cab be directly bounded by applying \pref{lem: useful concentration for OLS}. Let $z_n =\frac{1}{\sqrt{N}}( x_{1,n}-\widehat{L}x_{2,n})$ and define $Z_N\in \mathbb{R}^{N\times d_w}$ as the matrix with $z_i$ as its rows. Observe that with this notation $V_N = \sum_n z_n z_n^{\top} = Z_N^{\top} Z_N$. Furthermore, define $\xi_N\in\mathbb{R}^N$ as a vector with $\epsilon_n$ in its rows. The following relations hold
\begin{align*}
    \norm{\frac{1}{N} \sum_{n} \rbr{x_{1,n}-\widehat{L}x_{2,n}}\epsilon_n}^2_{\Sigma/\Sigma_2(\widehat{L})^{-1} } = \norm{\Sigma/\Sigma_2(\widehat{L})^{-1/2}Z_N^{\top} \xi_N}_{2}.
\end{align*}
With this form of writing $\norm{\frac{1}{N} \sum_{n} \rbr{x_{1,n}-\widehat{L}x_{2,n}}\epsilon_n}^2_{\Sigma/\Sigma_2(\widehat{L})^{-1} }$, we see it can be bounded by applying \pref{lem: useful concentration for OLS}.
\end{proof}

\begin{lemma}[Bound on Second Term of \pref{prop: semiparameteric least model sample complexity}]\label{lem: ls nuisance 3 terms, term 2}
Let $\delta\in (0,e^{-1})$. Assume the following holds.
\begin{enumerate}
    \item $N\geq \gamma_{d,\delta}$.
    \item$\Sigma/\Sigma_2(\widehat{L}) $ is invertible and $\norm{\Sigma/\Sigma_2(\widehat{L})^{-1/2}\Sigma/\Sigma_2^{1/2}}_{\op}\leq \sqrt{2}.$
    \item $\norm{\Sigma_{2}^{1/2}((c_\star - \widehat{c}) + (\widehat{L}-L_\star)^{\top} w_\star)}\leq \Delta$.
\end{enumerate}
Then, with probability greater then $1-\delta$ it holds that
\begin{align*}
    &\norm{\frac{1}{N}\sum_{i=1}^N \rbr{x_{1,n}-L_\star x_{2,n}}\inner{(c_\star - \widehat{c}) + (\widehat{L}-L_\star)^{\top} w_\star}{x_{2,n}}}_{\Sigma/\Sigma_2(\widehat{L})^{-1} } \leq  \frac{5\Delta \gamma_{d,\delta/3}}{\sqrt{N}}.
\end{align*}
\end{lemma}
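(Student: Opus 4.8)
The plan is to recognize the quantity as a whitened empirical cross‑covariance between the two decorrelated (and hence independent) Gaussian blocks, applied to the first‑stage error vector $\Delta_c := (c_\star-\widehat{c})+(\widehat{L}-L_\star)^{\top}w_\star$, and then to reduce everything to controlling an operator norm. First I would abbreviate $z_{1,n}=x_{1,n}-L_\star x_{2,n}$ and $z_{2,n}=x_{2,n}$ and write the inner sum as a single matrix–vector product,
\[
\frac1N\sum_{n} z_{1,n}\inner{\Delta_c}{z_{2,n}} = M_N\,\Delta_c,\qquad M_N:=\frac1N\sum_{n} z_{1,n}z_{2,n}^{\top}.
\]
By assumption (2), $\norm{\Sigma/\Sigma_2(\widehat{L})^{-1/2}\Sigma/\Sigma_2^{1/2}}_{\op}\le\sqrt2$, so passing from the $\Sigma/\Sigma_2(\widehat{L})^{-1}$ norm to the $\Sigma/\Sigma_2^{-1}$ norm costs only a factor $\sqrt2$, and it suffices to bound $\norm{(\Sigma/\Sigma_2)^{-1/2}M_N\Delta_c}_2$.

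Next I would whiten both blocks. Define $\tilde z_{1,n}=(\Sigma/\Sigma_2)^{-1/2}z_{1,n}$ and $\tilde z_{2,n}=\Sigma_2^{-1/2}z_{2,n}$; by~\eqref{eq: covariance matrix of orthogonalized model} we have $\tilde z_{1,n}\sim\Ncal(0,I_{d_w})$ and $\tilde z_{2,n}\sim\Ncal(0,I_{d_e})$, and—crucially—by the exact orthogonality $\EE[(x_1-L_\star x_2)x_2^{\top}]=0$ of~\eqref{eq: appendix orthogonal features} together with joint Gaussianity, $\tilde z_{1,n}$ and $\tilde z_{2,n}$ are \emph{independent}. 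Inserting $\Sigma_2^{-1/2}\Sigma_2^{1/2}$ and factoring gives
\[
\norm{(\Sigma/\Sigma_2)^{-1/2}M_N\Delta_c}_2 = \norm{\tilde M_N\,\Sigma_2^{1/2}\Delta_c}_2 \le \norm{\tilde M_N}_{\op}\,\norm{\Sigma_2^{1/2}\Delta_c}_2 \le \Delta\,\norm{\tilde M_N}_{\op},
\]
where $\tilde M_N=\frac1N\sum_{n}\tilde z_{1,n}\tilde z_{2,n}^{\top}$ and the last inequality is exactly assumption (3). Note this step uses only the hypothesized bound on $\Delta_c$, so no sample‑splitting independence is needed here: $\norm{\tilde M_N}_{\op}$ will be controlled uniformly over all possible $\Delta_c$.

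It remains to bound $\norm{\tilde M_N}_{\op}$, and this is the main step. I would observe that $\tilde M_N$ is precisely the off‑diagonal block of the empirical covariance of the stacked vector $\zeta_n=(\tilde z_{1,n},\tilde z_{2,n})\in\RR^{d}$ with $d=d_w+d_e$, which by the independence above is \emph{isotropic}, $\cov(\zeta_n)=I_d$. Since the operator norm of any block of a matrix is at most the operator norm of the whole matrix, $\norm{\tilde M_N}_{\op}\le\norm{\frac1N\sum_{n}\zeta_n\zeta_n^{\top}-I_d}_{\op}$, and the right‑hand side is governed by the covariance‑concentration bound \pref{lem: covariance estimation sub gaussian}, which under assumption (1) ($N\ge\gamma_{d,\delta}$) yields $\gamma_{d,\delta/3}/\sqrt N$ on an event of probability at least $1-\delta$. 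Chaining the three displays gives $\sqrt2\,\Delta\,\gamma_{d,\delta/3}/\sqrt N\le 5\Delta\,\gamma_{d,\delta/3}/\sqrt N$, as claimed. The only genuinely delicate point is this last reduction: it is the \emph{exact} orthogonality of the decorrelated features that makes $\zeta_n$ isotropic, allowing a single $d$‑dimensional spectral concentration inequality—rather than a coordinatewise union bound—to produce the $\sqrt d$ rate (i.e.\ the $\gamma_{d,\delta/3}$ factor) that constitutes the lower‑order term in \pref{prop: semiparameteric least model sample complexity}.
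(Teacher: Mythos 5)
Your proposal is correct, and its skeleton coincides with the paper's: first use assumption (2) to pass from the $\Sigma/\Sigma_2(\widehat{L})^{-1}$-norm to the $\Sigma/\Sigma_2^{-1}$-norm at a cost of $\sqrt{2}$, then insert $\Sigma_2^{-1/2}\Sigma_2^{1/2}$, pull out $\norm{\Sigma_2^{1/2}\Delta_c}_2\le\Delta$ via $\norm{Mv}_2\le\norm{M}_{\op}\norm{v}_2$, and reduce to the operator norm of the whitened empirical cross-covariance $\tilde M_N=\frac1N\sum_n\tilde z_{1,n}\tilde z_{2,n}^{\top}$ of two independent standard Gaussian blocks. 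Where you diverge is in the last concentration step: the paper invokes its dedicated cross-correlation lemma (\pref{lem: bound on cross correlation}), which bounds $\norm{\tilde M_N}_{\op}$ via a Hermitian dilation, a decomposition into three centered empirical covariances, three applications of \pref{lem: covariance estimation sub gaussian}, and a union bound (whence the constant $3$, absorbed into the $5$ of the statement). You instead observe that $\tilde M_N$ is the off-diagonal block of $\frac1N\sum_n\zeta_n\zeta_n^{\top}-I_d$ for the isotropic stacked vector $\zeta_n=(\tilde z_{1,n},\tilde z_{2,n})$, and that a submatrix's operator norm is dominated by that of the full matrix, so a single application of \pref{lem: covariance estimation sub gaussian} suffices. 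Both are valid under assumption (1); your route is shorter, avoids the dilation machinery, and yields a slightly better constant ($\sqrt{2}$ versus $3\sqrt{2}$), comfortably within the stated $5\Delta\gamma_{d,\delta/3}/\sqrt{N}$. Your remark that no sample-splitting independence is needed—because the operator-norm bound is uniform over $\Delta_c$—is also accurate and mirrors how the paper uses the estimate.
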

\begin{proof}
First, observe that the following relation hold
\begin{align}
    &\norm{\frac{1}{N}\sum_{i=1}^N \rbr{x_{1,n}-L_\star x_{2,n}}\inner{(c_\star - \widehat{c}) + (\widehat{L}-L_\star)^{\top} w_\star}{x_{2,n}}}_{\Sigma/\Sigma_2(\widehat{L})^{-1} } \nonumber \\
    &\leq \norm{\frac{1}{N}\sum_{i=1}^N \rbr{x_{1,n}-L_\star x_{2,n}}\inner{(c_\star - \widehat{c}) + (\widehat{L}-L_\star)^{\top} w_\star}{x_{2,n}}}_{\Sigma/\Sigma_2^{-1}}\norm{\Sigma/\Sigma_2(\widehat{L})^{-1/2}\Sigma/\Sigma_2^{1/2}}\nonumber \\
    &\leq \sqrt{2}\norm{\frac{1}{N}\sum_{i=1}^N \rbr{x_{1,n}-L_\star x_{2,n}}\inner{(c_\star - \widehat{c}) + (\widehat{L}-L_\star)^{\top} w_\star}{x_{2,n}}}_{\Sigma/\Sigma_2^{-1}}\tag{By assumption}\\
    &\leq \frac{\sqrt{2}}{N}\norm{\sum_{i=1}^N \rbr{x_{1,n}-L_\star x_{2,n}}\inner{(c_\star - \widehat{c}) + (\widehat{L}-L_\star)^{\top} w_\star}{x_{2,n}}}_{\Sigma/\Sigma_2^{-1}}\nonumber\\
    &=\frac{\sqrt{2}\Delta}{N}\norm{\sum_{i=1}^N \Sigma/\Sigma_2^{-1/2}\rbr{x_{1,n}-L_\star x_{2,n}}(\Sigma_{2}^{-1/2}x_{2,n})^{\top}}_{\op},\label{eq: second term nuisnace parameter rel 1}
\end{align}
where the last relation holds since $\norm{Ab}_2^2\leq \norm{b}_2^2\norm{A}_{\op}^2$, and since $\norm{\Sigma^{1/2}_2((c_\star - \widehat{c}) + (\widehat{L}-L_\star)^{\top} w_\star)}_2\leq \Delta$ by assumption. We now bound~\eqref{eq: second term nuisnace parameter rel 1}. 

Let $z_1 = \Sigma/\Sigma_2(x_{1,n}-L_\star x_{2,n})$ and $z_2=\Sigma_2^{-1/2} x_{2,n}$ and observe that $z_1\sim \Ncal(0,I_{d_w}),z_2\sim\Ncal(0,I_{d_e})$ and are independent random vectors since $\EE[z_1z_2^{\top}]=0$ (see~\eqref{eq: appendix orthogonal features}). Thus, by \pref{lem: bound on cross correlation}, for $N\geq \gamma_{d,\delta/3}$, the following bound holds with probability greater than $1-\delta$
\begin{align*}
   \eqref{eq: second term nuisnace parameter rel 1} =\frac{\sqrt{2}\Delta}{N}\norm{\sum_{i=1}^N z_{1,n}z_{2,n}^{\top}}_{\op} \leq \frac{5\Delta\gamma_{d,\delta/3}}{\sqrt{N}}.
\end{align*}
\end{proof}

\begin{lemma}[Bound on Third Term of \pref{prop: semiparameteric least model sample complexity}]\label{lem: ls nuisance 3 terms, term 3}
Let $\delta\in(0,e^{-1})$. Assume the following holds.
\begin{enumerate}
    \item $N\geq \gamma_{d,\delta/3}$
    \item$\Sigma/\Sigma_2(\widehat{L}) $ is invertible and $\norm{\Sigma/\Sigma_2(\widehat{L})^{-1/2}\Sigma/\Sigma_2^{1/2}}_{\op}\leq \sqrt{2}.$
    \item $\norm{\Sigma_{2}^{1/2}((c_\star - \widehat{c}) + (\widehat{L}-L_\star)^{\top} w_\star)}\leq \Delta$, and $\norm{\rbr{\Sigma / \Sigma_2}^{-1/2}(\widehat{L} - L_\star)\Sigma_2^{1/2} }_{\op}\leq \Delta_L$.
\end{enumerate}
Then, with probability greater than $1-\delta$, it holds that
\begin{align*}
    &\norm{\frac{1}{N}\sum_{i=1}^N \rbr{(\widehat{L}-L_\star )x_{2,n}}\inner{(c_\star - \widehat{c}) + (\widehat{L}-L_\star)^{\top} w_\star}{x_{2,n}} }_{\Sigma/\Sigma_2(\widehat{L})^{-1} } \leq \frac{5\Delta_L\Delta \gamma_{d_e,\delta/3}}{\sqrt{N}}  + \sqrt{2}\Delta_L\Delta.
\end{align*}
\end{lemma}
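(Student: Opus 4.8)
The plan is to recognize this third term as a pure second-moment object in $x_2$ and to isolate its (nonvanishing) conditional mean. Writing $\Delta L = \widehat L - L_\star$, $\Delta c = (c_\star - \widehat c) + (\widehat L - L_\star)^{\top} w_\star$, and $\widehat\Sigma_2 = \frac1N\sum_n x_{2,n}x_{2,n}^{\top}$, the vector inside the norm collapses, since $\rbr{(\widehat L - L_\star)x_{2,n}}\inner{\Delta c}{x_{2,n}} = \Delta L\, x_{2,n}x_{2,n}^{\top}\Delta c$, to $\Delta L\,\widehat\Sigma_2\,\Delta c$. Because $\widehat L$ and $\widehat c$ are built from the first half of the data while this sum runs over the independent second half (cf.\ \pref{alg: semi parametric regression}), conditioning on the first phase renders $\Delta L$ and $\Delta c$ deterministic, so the only remaining randomness lives in $\widehat\Sigma_2$.

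First I would convert the perturbed weight $\Sigma/\Sigma_2(\widehat L)^{-1}$ into the exact Schur complement $\Sigma/\Sigma_2^{-1}$ using Assumption (2), which costs a factor $\sqrt2$. Then I would whiten by setting $M = \rbr{\Sigma/\Sigma_2}^{-1/2}\Delta L\,\Sigma_2^{1/2}$, $a = \Sigma_2^{1/2}\Delta c$, and $\widehat W = \Sigma_2^{-1/2}\widehat\Sigma_2\,\Sigma_2^{-1/2} = \frac1N\sum_n z_{2,n}z_{2,n}^{\top}$ with $z_{2,n} = \Sigma_2^{-1/2}x_{2,n}\sim\Ncal(0,I_{d_e})$. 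Inserting $\Sigma_2^{1/2}\Sigma_2^{-1/2}=I$ twice shows $\rbr{\Sigma/\Sigma_2}^{-1/2}\Delta L\,\widehat\Sigma_2\,\Delta c = M\widehat W a$, while Assumption (3) yields $\norm{M}_{\op}\le\Delta_L$ and $\norm{a}_2\le\Delta$.

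The crux is the decomposition $\widehat W = I + (\widehat W - I)$. The mean piece contributes $\norm{Ma}_2\le\norm{M}_{\op}\norm{a}_2\le\Delta_L\Delta$; this is a genuine bias---the product of the two first-phase errors $\Delta L\,\Sigma_2\,\Delta c$---and is precisely the source of the unavoidable $\sqrt2\,\Delta_L\Delta$ term. The deviation piece contributes $\norm{M(\widehat W - I)a}_2\le\norm{M}_{\op}\norm{\widehat W - I}_{\op}\norm{a}_2\le\Delta_L\Delta\,\norm{\widehat W - I}_{\op}$. As $\widehat W$ is the empirical covariance of $N$ standard Gaussian vectors in $\RR^{d_e}$, I would invoke \pref{lem: covariance estimation sub gaussian} to obtain $\norm{\widehat W - I}_{\op}\le\gamma_{d_e,\delta/3}/\sqrt N$ on an event of probability at least $1-\delta/3$ (legitimate since $N\ge\gamma_{d,\delta/3}\ge\gamma_{d_e,\delta/3}$). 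Combining the two pieces and reinstating the $\sqrt2$ from the weight conversion gives a bound of order $\sqrt2\,\Delta_L\Delta + \sqrt2\,\Delta_L\Delta\,\gamma_{d_e,\delta/3}/\sqrt N$; absorbing $\sqrt2$ and the concentration constant into the stated factor $5$ produces the claimed inequality.

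The main obstacle is conceptual rather than computational: unlike the first two error terms, this one does not concentrate to zero, because its two factors share the randomness $x_2$ and their product has a nonzero conditional mean. Recognizing that data splitting freezes $\Delta L$ and $\Delta c$ in the second phase---reducing the estimate to a product of operator-norm bounds plus a single self-covariance concentration inequality---is the key step. The surviving bias $\sqrt2\,\Delta_L\Delta$ is harmless downstream: once $\Delta,\Delta_L = O(\sqrt{d\,d_w\log(\cdot)/N})$ are substituted, it becomes a lower-order $d\,d_w\log(\cdot)/N$ contribution in \pref{prop: semiparameteric least model sample complexity}.
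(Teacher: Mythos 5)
Your proposal is correct and follows essentially the same route as the paper's proof: convert the $\Sigma/\Sigma_2(\widehat L)^{-1}$-weighted norm to the exact Schur-complement norm via Assumption (2) at the cost of $\sqrt2$, whiten so the term becomes $M\widehat W a$ with $\norm{M}_{\op}\le\Delta_L$ and $\norm{a}_2\le\Delta$, split the empirical covariance as $I$ plus its deviation, and invoke \pref{lem: covariance estimation sub gaussian}. The explicit identification of the non-vanishing bias $\sqrt2\,\Delta_L\Delta$ and the remark on data splitting are accurate glosses on the same argument.
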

\begin{proof}
The following relations hold.
\begin{align*}
    &\norm{\frac{1}{N}\sum_{i=1}^N \rbr{(\widehat{L}-L_\star )x_{2,n}}\inner{(c_\star - \widehat{c}) + (\widehat{L}-L_\star)^{\top} w_\star}{x_{2,n}} }_{\Sigma/\Sigma_2(\widehat{L})^{-1} }\\
    &\leq \norm{\frac{1}{N}\sum_{i=1}^N \rbr{(\widehat{L}-L_\star )x_{2,n}}\inner{(c_\star - \widehat{c}) + (\widehat{L}-L_\star)^{\top} w_\star}{x_{2,n}} }_{\Sigma/\Sigma_2^{-1}}\norm{\Sigma/\Sigma_2(\widehat{L})^{-1/2}\Sigma/\Sigma_2^{1/2}}_{\op}\\
    &\leq \sqrt{2}\norm{\frac{1}{N}\sum_{i=1}^N \rbr{(\widehat{L}-L_\star )x_{2,n}}\inner{(c_\star - \widehat{c}) + (\widehat{L}-L_\star)^{\top} w_\star}{x_{2,n}} }_{\Sigma/\Sigma_2^{-1}}\tag{By assumption}\\
    &= \sqrt{2}\norm{\rbr{\Sigma/\Sigma_2^{-1/2}(\widehat{L}-L_\star )\Sigma_2^{1/2}}\frac{1}{N}\sum_{i=1}^N \rbr{\Sigma_2^{-1/2}x_{2,n}}\rbr{\Sigma_2^{-1/2}x_{2,n}}^{\top} \rbr{\Sigma_2^{1/2}(c_\star - \widehat{c}) + (\widehat{L}-L_\star)^{\top} w_\star}}_2\\
    &\leq  \sqrt{2}\Delta_L \Delta\norm{\sum_{i=1}^N \rbr{\Sigma_2^{-1/2}x_{2,n}}\rbr{\Sigma_2^{-1/2}x_{2,n}}^{\top}}_{\op}\\
    &\leq  \sqrt{2}\Delta_L \Delta\norm{\sum_{i=1}^N \rbr{\Sigma_2^{-1/2}x_{2,n}}\rbr{\Sigma_2^{-1/2}x_{2,n}}^{\top} - I_{d_e}}_{\op} +  \sqrt{2}\Delta_L \Delta\\
    &\leq \frac{ 5\Delta_L \Delta \gamma_{d_e,\delta/3}}{\sqrt{N}} +  \sqrt{2}\Delta_L \Delta.
\end{align*}
where the last relation holds with probability greater than $1-\delta$ by the concentration of the empirical covariance matrix (see \pref{lem: covariance estimation sub gaussian}) for $N\geq \gamma_{d,\delta}\geq \gamma_{d_e,\delta}$, while observing that $\Sigma^{-1/2}_2x_{2,n}\sim \Ncal(0,I_{d_e})$. 
\end{proof}

\begin{lemma}[Lower Order Bound on the Design Matrix of the Perturbed Least Square]\label{lem: lower bound on the perturbed least square}
Let $\norm{(L_\star - \widehat{L}) \Sigma_2^{1/2}}_{\op}\leq \Delta_L$. Furthermore,
\begin{align*}
    &\Sigma/\Sigma_{2} = \EE\sbr{(x_1 - L_\star x_2)(x_1 - L_\star x_2)^{\top}}\\
    &\Sigma/\Sigma_{2}(\widehat{L}) = \EE\sbr{(x_1 - \widehat{L} x_2)(x_1 - \widehat{L} x_2)^{\top}}.
\end{align*}
Then, the following relations hold.
\begin{enumerate}
    \item $\norm{\Sigma/\Sigma_{2} - \Sigma/\Sigma_{2}(\widehat{L})}_{\op} \leq \Delta^2_L$.
    \item $\lambda_{\min}(\Sigma/\Sigma_{2}(\widehat{L})) \geq \lambda_{\min}(\Sigma/\Sigma_{2}) - \Delta_L^2.$
    \item  Assuming $\rbr{\Delta_L^2/\lambda_{\min}(\Sigma/\Sigma_{2})}<1$ then $$\norm{\rbr{\Sigma/\Sigma_{2}(\widehat{L})}^{-1/2} \Sigma/\Sigma_{2}^{1/2}}_{\op} \leq \sqrt{(1- \rbr{\Delta_L^2/\lambda_{\min}(\Sigma/\Sigma_{2})})^{-1}}.$$
\end{enumerate}
\end{lemma}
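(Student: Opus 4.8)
The plan is to establish item 1 by a direct second-moment expansion, deduce item 2 from item 1 by Weyl's inequality, and obtain item 3 from items 1 and 2 by a generic relative-perturbation argument that does not exploit the sign of the perturbation. Throughout I would write $D = L_\star - \widehat{L}$, $M = \Sigma/\Sigma_2$, and $\widehat{M} = \Sigma/\Sigma_2(\widehat{L})$, so that by hypothesis $\norm{D\Sigma_2^{1/2}}_{\op}\le \Delta_L$.

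For item 1, I would decompose $x_1 - \widehat{L}x_2 = (x_1 - L_\star x_2) + D x_2$ and expand the outer product before taking expectations. The two cross terms are $\EE[(x_1 - L_\star x_2)(Dx_2)^{\top}]$ and its transpose, and both vanish because $\EE[(x_1 - L_\star x_2)x_2^{\top}] = 0$ by the orthogonality property~\eqref{eq: appendix orthogonal features}. This is the one structural fact doing the real work: it collapses the expansion to $\widehat{M} = M + D\Sigma_2 D^{\top}$. Since $D\Sigma_2 D^{\top} = (D\Sigma_2^{1/2})(D\Sigma_2^{1/2})^{\top}$, we obtain $\norm{\widehat{M}-M}_{\op} = \norm{D\Sigma_2^{1/2}}_{\op}^2 \le \Delta_L^2$, which is exactly item 1. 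As a byproduct $\widehat{M}\succeq M$, but I will deliberately not use the sign, since the lemma is stated in the weaker two-sided form.

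Item 2 is then immediate: Weyl's inequality gives $|\lambda_{\min}(\widehat M) - \lambda_{\min}(M)| \le \norm{\widehat M - M}_{\op} \le \Delta_L^2$, hence $\lambda_{\min}(\widehat M) \ge \lambda_{\min}(M) - \Delta_L^2$. Under the assumption $\Delta_L^2/\lambda_{\min}(M)<1$ this already certifies $\lambda_{\min}(\widehat M) > 0$, so $\widehat M$ is PD and $\widehat M^{-1/2}$ is well defined.

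For item 3, I would start from $\norm{\widehat M^{-1/2} M^{1/2}}_{\op}^2 = \lambda_{\max}(\widehat M^{-1/2} M \widehat M^{-1/2})$ and write $\widehat M^{-1/2} M \widehat M^{-1/2} = I + \widehat M^{-1/2}(M - \widehat M)\widehat M^{-1/2}$. Bounding the second summand in operator norm by $\norm{\widehat M^{-1}}_{\op}\,\norm{M - \widehat M}_{\op} \le \Delta_L^2/\lambda_{\min}(\widehat M)$ and substituting the lower bound on $\lambda_{\min}(\widehat M)$ from item 2 yields
$$\lambda_{\max}\bigl(\widehat M^{-1/2} M \widehat M^{-1/2}\bigr) \le 1 + \frac{\Delta_L^2}{\lambda_{\min}(M) - \Delta_L^2} = \Bigl(1 - \Delta_L^2/\lambda_{\min}(M)\Bigr)^{-1},$$
and taking square roots gives the claimed bound. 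There is no serious obstacle here: the only delicate point is the vanishing of the cross terms in item 1, which is where the exactness of $L_\star$ as the population regression coefficient enters; everything downstream is routine spectral-perturbation bookkeeping.
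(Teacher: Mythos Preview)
Your proof is correct and matches the paper's argument for items 1 and 2: the second-moment expansion with the cross terms killed by $\EE[(x_1 - L_\star x_2)x_2^{\top}]=0$, followed by Weyl's inequality. For item 3 there is a minor difference in packaging: the paper normalizes by $M^{-1/2}$, bounds $\norm{M^{-1/2}(\widehat M - M)M^{-1/2}}_{\op}\le \Delta_L^2/\lambda_{\min}(M)$, and invokes the relative spectral-norm lemma of \citet{hsu2012random} (\pref{lem: hsu relative spectral norm error}) as a black box; you instead normalize by $\widehat M^{-1/2}$, bound $\norm{\widehat M^{-1}}_{\op}$ via your item 2, and arrive at the identical final expression by a direct computation. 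The two are equivalent---your version simply unpacks inline what the Hsu--Kakade--Zhang lemma encapsulates, so it is slightly more self-contained.
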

\begin{proof}
{\bf First relation.} By definition,
\begin{align}
    &\Sigma/\Sigma_{2} - \Sigma/\Sigma_{2}(\widehat{L}) = \EE[(x_1 - L_\star x_2)(x_1 - L_\star x_2)^{\top}] - \EE[(x_1 - \widehat{L} x_2)(x_1 - \widehat{L} x_2)^{\top}] \nonumber \\
    &=\EE[(x_1 - L_\star x_2)(x_1 - L_\star x_2)^{\top}] - \EE[(x_1 - L_\star  x_2 - (\widehat{L}-L_\star) x_2)(x_1 -  L_\star  x_2 - (\widehat{L}-L_\star) x_2)^{\top}]\label{eq: lower bound perturbed design matrix rel 1}
\end{align}
Since $(a+b)(a+b)^{\top} = aa^{\top} + bb^{\top} + ab^{\top} + ba^{\top}$, defining $a=x_1 - L_\star  x_2 , b= - (\widehat{L}-L_\star) x_2$, we get that
\begin{align*}
    &\eqref{eq: lower bound perturbed design matrix rel 1} = -\EE[(x_1 - L_\star  x_2)((\widehat{L}-L_\star) x_2)^{\top}] -\EE[(\widehat{L}-L_\star) x_2)(x_1 - L_\star  x_2)^{\top}] + (\widehat{L}-L_\star)\EE[ x_2 x_2^{\top}](\widehat{L}-L_\star)^{\top}\\
    &=(\widehat{L}-L_\star)\Sigma_2(\widehat{L}-L_\star)^{\top},
\end{align*}
since $$\EE[(x_1 - L_\star  x_2)((\widehat{L}-L_\star) x_2)^{\top}]=\EE[\EE[(x_1 - L_\star  x_2)|x_2](\widehat{L}-L_\star) x_2)^{\top}]=\EE[(L_\star   - \widehat{L})  x_2((L_\star-L_\star) x_2)^{\top}]=0,$$
and, similarly, $\EE[((\widehat{L}-L_\star) x_2)(x_1 - L_\star  x_2)^{\top}   ]=0$. We get that
\begin{align*}
    &\norm{\Sigma/\Sigma_{2} - \Sigma/\Sigma_{2}(\widehat{L})}_{\op} =  \norm{(\widehat{L}-L_\star)\Sigma_{2}(\widehat{L}-L_\star)}_{\op}\\
    & =  \norm{((\widehat{L}-L_\star)\Sigma^{1/2}_2)((\widehat{L}-L_\star)\Sigma^{1/2}_2)^{\top}}_{\op}\\
    &\leq \norm{((\widehat{L}-L_\star)\Sigma^{1/2}_2)}_{\op}^2\leq \Delta^2_L.
\end{align*}

{\bf Second relation.} Direct application of Weyl's inequality.

{\bf Third relation.} Let $\Delta = \Sigma/\Sigma_{2}^{-1/2}\rbr{\Sigma/\Sigma_{2}(\widehat{L})-\Sigma/\Sigma_{2}}\Sigma/\Sigma_{2}^{-1/2}.$ Observe that
\begin{align*}
    &\norm{\Delta}_{\op} \leq \frac{1}{\lambda_{\min}(\Sigma/\Sigma_{2})} \norm{\Sigma/\Sigma_{2}(\widehat{L}) - \Sigma/\Sigma_{2}}_{\op} \tag{$\norm{\cdot}_{\op}$ is submultiplicative}\\
    &\leq  \frac{\Delta_L^2}{\lambda_{\min}(\Sigma/\Sigma_{2})} \tag{First relation of the lemma}.
\end{align*}
Thus, since $\norm{\Delta}_{\op} \leq \frac{\Delta_L^2}{\lambda_{\min}(\Sigma/\Sigma_{2})}<1$ by assumption, we can apply \pref{lem: hsu relative spectral norm error} and conclude the proof.
\end{proof}

\subsubsection{Least Square General Results and Tools}\label{app: least square general results}

\begin{theorem}[\cite{hsu2012tail}, Theorem 1]\label{thm: hsu subgaussia random design concentration}
Let $A\in \mathbb{R}^{m\times N}$ be a matrix, and let $K = A^{\top} A$. Suppose tat $\xi = (\epsilon_1,..,\epsilon_N)$ is a zero-mean and $\sigma$ sub-gaussian vector such that for some $\sigma\geq 0$ it holds that $\EE[\exp(v^{\top} \xi)]\leq \exp\rbr{\norm{v}^2_2\sigma^2/2}$ for all $v\in \mathbb{R}^N.$ Then, for any $t>0$,
\begin{align*}
    \Pr\rbr{\norm{A\xi}_2^2 \geq \sigma^2\rbr{\tr\rbr{K} +2\sqrt{\tr\rbr{K^2}t} +2\norm{K}t}} \leq \exp\rbr{-t}.
\end{align*}
\end{theorem}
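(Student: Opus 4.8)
The plan is to control the exponential moment of the nonnegative quadratic form $\norm{A\xi}_2^2=\xi^\top K\xi$ and feed it into an exponential Markov (Chernoff) bound, optimizing the free parameter at the very end. We may assume $\sigma>0$ and $K\neq 0$, the complementary cases being degenerate. Write the eigenvalues of $K=A^\top A$ as $\mu_1,\dots,\mu_N\ge 0$, so that $\tr(K)=\sum_i\mu_i$, $\tr(K^2)=\sum_i\mu_i^2$, and $\norm{K}_{\op}=\max_i\mu_i$. For $s>0$, the exponential Markov inequality gives $\Pr\rbr{\norm{A\xi}_2^2\ge u}\le e^{-su}\,\EE\sbr{\exp\rbr{s\norm{A\xi}_2^2}}$, so the entire problem reduces to bounding this moment generating function (MGF).

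The key, and most delicate, step is to \emph{linearize} the quadratic exponent so the hypothesis — which only controls \emph{linear} exponential moments of $\xi$ — can be applied. I would introduce a standard Gaussian $g\sim\Ncal(0,I_m)$ independent of $\xi$ and use $\exp\rbr{\tfrac12\norm{v}_2^2}=\EE_g\sbr{\exp(v^\top g)}$ with $v=\sqrt{2s}\,A\xi$, giving
\[
\exp\rbr{s\norm{A\xi}_2^2}=\EE_g\sbr{\exp\rbr{\sqrt{2s}\,\xi^\top A^\top g}}.
\]
Taking $\EE_\xi$ and interchanging the order of integration (Tonelli, since the integrand is nonnegative), the sub-Gaussian hypothesis applied with $v=\sqrt{2s}\,A^\top g$ bounds the inner expectation by $\exp\rbr{s\sigma^2\,g^\top AA^\top g}$. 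What remains is a Gaussian quadratic form whose MGF is exact: because $AA^\top$ and $A^\top A=K$ share the same nonzero eigenvalues, $\EE_g\sbr{\exp\rbr{s\sigma^2 g^\top AA^\top g}}=\prod_i(1-2s\sigma^2\mu_i)^{-1/2}$, valid precisely when $s\sigma^2<(2\norm{K}_{\op})^{-1}$. Hence, on this range, $\log\EE\sbr{\exp\rbr{s\norm{A\xi}_2^2}}\le-\tfrac12\sum_i\log(1-2s\sigma^2\mu_i)$.

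Next I would convert the logarithm into the trace quantities of the statement. With $\theta_i=s\sigma^2\mu_i$ and $\theta_{\max}=s\sigma^2\norm{K}_{\op}$, the elementary inequality $-\tfrac12\log(1-2x)\le x+\tfrac{x^2}{1-2\theta_{\max}}$ for $0\le x\le\theta_{\max}$ — proved by writing $-\tfrac12\log(1-2x)-x=\int_0^x\tfrac{2y}{1-2y}\,dy\le\tfrac{1}{1-2\theta_{\max}}\int_0^x 2y\,dy$ — sums to
\[
\log\EE\sbr{\exp\rbr{s\norm{A\xi}_2^2}}\ \le\ s\sigma^2\,\tr(K)+\frac{s^2\sigma^4\,\tr(K^2)}{1-2s\sigma^2\norm{K}_{\op}}.
\]

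Finally I would plug this into the Chernoff bound with the target threshold $u=\sigma^2\rbr{\tr(K)+2\sqrt{\tr(K^2)\,t}+2\norm{K}_{\op}t}$; the $\tr(K)$ contributions cancel against $-su$, leaving a one-parameter expression in $s$. Choosing $s\sigma^2=\sqrt{t}\big/\rbr{\sqrt{\tr(K^2)}+2\norm{K}_{\op}\sqrt{t}}$, which lies strictly below $(2\norm{K}_{\op})^{-1}$ (so the MGF bound applies), a short substitution using $1-2s\sigma^2\norm{K}_{\op}=\sqrt{\tr(K^2)}/(\sqrt{\tr(K^2)}+2\norm{K}_{\op}\sqrt{t})$ shows the exponent equals exactly $-t$, yielding $\Pr\rbr{\norm{A\xi}_2^2\ge u}\le e^{-t}$. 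The main obstacle is the second paragraph: the Gaussian linearization/decoupling that turns a quadratic exponential into a Gaussian chaos with a closed-form MGF, and thereby lets the purely linear sub-Gaussian hypothesis be invoked; the log-MGF estimate and the scalar optimization are then routine.
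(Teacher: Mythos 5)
Your proof is correct, but note there is nothing in the paper to compare it against: the paper imports this statement verbatim as Theorem 1 of Hsu--Kakade--Zhang (2012) and gives no proof of its own. Your argument is in fact the standard proof of that result --- Chernoff bounding after linearizing the quadratic exponent via an independent Gaussian $g$, invoking the sub-Gaussian hypothesis at $v=\sqrt{2s}\,A^{\top}g$, evaluating the resulting Gaussian chaos MGF exactly (using that $AA^{\top}$ and $K$ share nonzero eigenvalues), bounding $-\tfrac12\log(1-2x)\le x+x^2/(1-2\theta_{\max})$, and choosing $s\sigma^2=\sqrt{t}\,/\bigl(\sqrt{\tr(K^2)}+2\norm{K}_{\op}\sqrt{t}\bigr)$, at which point the exponent collapses to exactly $-t$; I verified each of these steps, including the admissibility $s\sigma^2<1/(2\norm{K}_{\op})$. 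One hairline remark: with the non-strict ``$\geq$'' as transcribed in this paper, the case $\sigma=0$ is genuinely false (the event then has probability one), so your dismissal of it as degenerate tacitly relies on the strict inequality of the original statement --- a defect of the transcription, not of your proof.
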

The following lemma will be useful for our analysis. 
\begin{lemma}[Noise Concentration for the OLS]\label{lem: useful concentration for OLS}
Let $X_N\in \mathbb{R}^{N\times d}$ be a matrix with $\cbr{\frac{1}{\sqrt{N}} x_n}_{n=1}^N$ in its rows, $V_{N} = \frac{1}{N}\sum_{n=1}^N x_n x_n^{\top}= X_N^TX_N$ and $\xi_N\in \mathbb{R}^N$ be a matrix with $\cbr{\epsilon_n}_{n=1}^N$ in its rows. Furthermore, assume that $(i)$ $\epsilon_n$ and $x_n$ are independent, $(ii)$ $\norm{\Sigma^{-1/2} V_N^{1/2}}_{\op}^2\leq 3/2$, and $(iii)$ $\Sigma = \EE[x_nx_n^{\top}]$ is PD. Then, with probability greater than $1-\delta$ it holds that
\begin{align*}
    \frac{1}{\sqrt{N}}\norm{\Sigma^{-1/2} X_N^{\top}\xi_N}_2 \leq 3\sigma\sqrt{\frac{d\log\rbr{\frac{1}{\delta}}}{N}}.
\end{align*}
\end{lemma}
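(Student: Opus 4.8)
The plan is to recognize this as a direct application of the random-design concentration bound \pref{thm: hsu subgaussia random design concentration}, after choosing the right matrix and controlling its spectrum via assumption $(ii)$. Concretely, I would set $A = \Sigma^{-1/2} X_N^{\top} \in \mathbb{R}^{d\times N}$, so that the quantity to be bounded is exactly $\frac{1}{\sqrt{N}}\norm{A\xi_N}_2$. Because the probability is over the noise, I would first condition on the design $X_N$: by assumption $(i)$ the noise $\xi_N$ is independent of $X_N$, so conditionally $A$ is a fixed matrix while $\xi_N$ remains a zero-mean $\sigma$ sub-Gaussian vector with independent coordinates, hence it satisfies the moment-generating bound $\EE[\exp(v^{\top}\xi_N)]\le \exp(\norm{v}_2^2\sigma^2/2)$ demanded by the theorem. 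This allows me to apply \pref{thm: hsu subgaussia random design concentration} with $K = A^{\top}A$ and an arbitrary $t>0$.

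The next step is to translate the three spectral functionals $\tr(K)$, $\tr(K^2)$, $\norm{K}_{\op}$ into assumption $(ii)$. Since $\tr(K)=\tr(A^{\top}A)=\tr(AA^{\top})$, and likewise for $K^2$ and the operator norm, I would pass to the $d\times d$ matrix $AA^{\top} = \Sigma^{-1/2}X_N^{\top}X_N\Sigma^{-1/2} = \Sigma^{-1/2} V_N \Sigma^{-1/2}$, using $X_N^{\top}X_N = V_N$. The key identity is $\norm{\Sigma^{-1/2}V_N\Sigma^{-1/2}}_{\op} = \norm{\Sigma^{-1/2}V_N^{1/2}}_{\op}^2$, which follows from $\norm{M}_{\op}^2 = \norm{MM^{\top}}_{\op}$ applied to $M = \Sigma^{-1/2}V_N^{1/2}$ together with the symmetry of $V_N^{1/2}$ and $\Sigma^{-1/2}$. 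Assumption $(ii)$ then yields $\norm{AA^{\top}}_{\op}\le 3/2$, and since $AA^{\top}$ is PSD of size $d$ I can bound $\tr(K)\le \tfrac{3d}{2}$, $\tr(K^2)\le \tfrac{9d}{4}$, and $\norm{K}_{\op}\le \tfrac{3}{2}$.

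Finally I would choose $t=\log(1/\delta)$ so that $\exp(-t)=\delta$, and substitute the three bounds into \pref{thm: hsu subgaussia random design concentration}, obtaining $\norm{A\xi_N}_2^2 \le \sigma^2\rbr{\tfrac{3d}{2} + 3\sqrt{dt} + 3t}$ with probability at least $1-\delta$; dividing by $N$ produces the target quantity $\frac{1}{N}\norm{A\xi_N}_2^2$. The rest is numerical: using $d\ge 1$ and, as elsewhere in the paper, $\delta\in(0,e^{-1})$ so that $t\ge 1$, I collapse the three terms via $\tfrac{3d}{2}\le \tfrac32 dt$, $3\sqrt{dt}\le 3dt$, and $3t\le 3dt$ into $\tfrac{15}{2}dt \le 9dt$, which gives the claimed $3\sigma\sqrt{d\log(1/\delta)/N}$.

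The step I expect to be the main subtlety (rather than a deep obstacle) is the conditioning argument that decouples $A$ from $\xi_N$, so that the \emph{fixed-matrix} bound of \pref{thm: hsu subgaussia random design concentration} legitimately applies even though the design $X_N$ is random and $(ii)$ is itself a random event; the accompanying care is in verifying that this single spectral hypothesis simultaneously controls all three functionals through the $\tr(A^{\top}A)=\tr(AA^{\top})$ reduction. Once those are in place, the final inequality is a constant-chasing simplification that relies crucially on the lower bound $t\ge 1$.
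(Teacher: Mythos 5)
Your proof is correct, and it leans on the same external ingredient as the paper --- Theorem~1 of Hsu et al.\ (\pref{thm: hsu subgaussia random design concentration}) --- but it feeds assumption $(ii)$ into that theorem differently. The paper first factors $X_N^{\top} = V_N^{1/2}(V_N^{\dagger})^{1/2}X_N^{\top}$ through the projection onto the column space of $X_N^{\top}$, pulls out $\norm{\Sigma^{-1/2}V_N^{1/2}}_{\op}$ as a multiplicative prefactor controlled by $(ii)$, and then applies the tail bound to the self-normalized quantity $\norm{X_N^{\top}\xi_N}_{V_N^{\dagger}}$, where $K = X_N V_N^{\dagger}X_N^{\top}$ is a projection with $\tr\rbr{K}\leq d$ and $\norm{K}_{\op}\leq 1$. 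You instead apply the tail bound once, directly to $A = \Sigma^{-1/2}X_N^{\top}$, and use $(ii)$ (via $\norm{AA^{\top}}_{\op} = \norm{\Sigma^{-1/2}V_N^{1/2}}_{\op}^2 \leq 3/2$ and $\tr\rbr{A^{\top}A}=\tr\rbr{AA^{\top}}$) to bound all three spectral functionals $\tr\rbr{K}$, $\tr\rbr{K^2}$, $\norm{K}_{\op}$ at once. Your route is slightly more economical --- one application of the concentration result, no pseudo-inverse manipulations --- while the paper's factorization isolates a dimension-$d$ projection whose spectral quantities are bounded unconditionally, confining assumption $(ii)$ to a single operator-norm prefactor. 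Both arrive at the stated constant; your constant-chasing ($3d/2 + 3\sqrt{dt}+3t \leq 9dt$ for $t = \log(1/\delta)\geq 1$, $d\geq 1$) checks out, and your explicit conditioning on $X_N$ to justify applying a fixed-matrix bound to a random design is a point the paper leaves implicit. The only shared caveat is that both arguments implicitly require the coordinates of $\xi_N$ to be mutually independent (or at least jointly $\sigma$ sub-Gaussian as a vector) for the moment-generating hypothesis of the tail bound, which the lemma statement does not spell out; this is not a defect of your proof relative to the paper's.
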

\begin{proof}
Let $P_{X_N}\in \mathbb{R}^{d\times N}$ be the projection on the column space of $X_N^{\top}$. Observe that, by definition, $X_N^{\top} = P_{X_N}X_N^{\top}$ and $ P_{X_N} = V_N^{1/2} (V_N^\dagger)^{1/2}$, and thus
\begin{align}
    &\eqref{eq: OLS analysis relation 1}=\frac{1}{\sqrt{N}} \norm{\Sigma^{-1/2}X_N^{\top}\xi_N}_2 \nonumber\\
    &= \frac{1}{\sqrt{N}} \norm{\Sigma^{-1/2} V_N^{1/2} (V_N^\dagger)^{1/2} X_N^{\top}\xi_N}_2 \nonumber \\
    &\leq \frac{1}{\sqrt{N}} \norm{\Sigma^{-1/2} V_N^{1/2}}_{\op} \norm{X_N^{\top}\xi_N}_{V_N^\dagger}.\label{eq: OLS analysis relation 2}
\end{align}
The first term is bounded by $3/2$ by assumption. We now bound the second term in~\eqref{eq: OLS analysis relation 2}. It holds that
\begin{align}
    &\norm{X_N^{\top}\xi_N }_{V_{N}^\dagger}^2  = \frac{1}{N} \xi_N^{\top} X_N(X_N^TX_N)^{\dagger}X_N^{\top}\xi_N. \label{eq: OLS analysis relation 2.5}
\end{align}
Furthermore, it can be verified that  $\tr\rbr{X_N(X_N^TX_N)^{\dagger}X_N^{\top}}\leq d$ and $\norm{X_N(X_N^TX_N)^{\dagger}X_N^{\top}}_\op\leq 1$. Hence, \pref{thm: hsu subgaussia random design concentration} of~\cite{hsu2012tail}, is applicable. Applying this result and assuming $\delta\in (0,e^{-1})$, we get  
\begin{align}
    \norm{X_N^{\top}\xi_N }_{V_{N}^\dagger} \leq 2\sigma\sqrt{\frac{d\log\rbr{\frac{1}{\delta}}}{N}}.\label{eq: general least square upper bound relation 2 -1}
\end{align}
with probability greater than $1-\delta$. This concludes the proof of the lemma.
\end{proof}

The following lemma allows us to translate performance of the OLS under the empirical design matrix to the performance under the expected empirical design matrix, i.e., the covariance matrix.
\begin{lemma}[Translating Empirical to Expected Performance]\label{lem: translating empirical to expected performance}
Let $w\in \mathbb{R}^d$ be a vector, $V_N = \frac{1}{N} \sum_{n=1}^N x_nx_n^{\top}$ and $\Sigma = \EE[xx^{\top}]$ be a PD matrix. Let $\Delta = \Sigma^{-1/2}V_N\Sigma^{-1/2} -I$ and assume that 
\begin{enumerate}
    \item $\norm{\Sigma^{-1/2}V_N w} \leq \epsilon$. 
    \item $\norm{\Sigma^{-1/2}V_N\Sigma^{-1/2} -I}_{\op} \leq c<1$.
\end{enumerate}
Then, it holds that $\norm{w}_{\Sigma} \leq \frac{\epsilon}{1-c}.$
\end{lemma}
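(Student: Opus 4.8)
The plan is to reduce the claim to a reverse triangle inequality after a symmetrizing change of variables. First I would set $u = \Sigma^{1/2} w$, which is well defined since $\Sigma \succ 0$. Then $\norm{w}_{\Sigma} = \sqrt{w^{\top}\Sigma w} = \norm{\Sigma^{1/2} w}_2 = \norm{u}_2$, so the target inequality is equivalent to the bound $\norm{u}_2 \leq \epsilon/(1-c)$.

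Next I would rewrite the quantity controlled by hypothesis (1) in terms of $u$ and the matrix $\Delta = \Sigma^{-1/2} V_N \Sigma^{-1/2} - I$ from hypothesis (2). Inserting the identity $\Sigma^{-1/2}\Sigma^{1/2} = I$ gives
\[
    \Sigma^{-1/2} V_N w = \Sigma^{-1/2} V_N \Sigma^{-1/2}\,\Sigma^{1/2} w = \rbr{I + \Delta}\,u,
\]
so hypothesis (1) becomes $\norm{(I+\Delta)u}_2 \leq \epsilon$. This is the only genuinely load-bearing observation: that the asymmetric product $\Sigma^{-1/2} V_N$ appearing in (1) passes through the symmetric matrix $\Sigma^{-1/2} V_N \Sigma^{-1/2}$ of (2) precisely under the substitution $u = \Sigma^{1/2} w$.

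Then I would apply the reverse triangle inequality together with the operator-norm bound $\norm{\Delta}_{\op} \leq c$ from hypothesis (2):
\[
    \norm{(I+\Delta)u}_2 \;\geq\; \norm{u}_2 - \norm{\Delta u}_2 \;\geq\; \rbr{1 - \norm{\Delta}_{\op}}\norm{u}_2 \;\geq\; (1-c)\,\norm{u}_2,
\]
where the middle step uses $\norm{\Delta u}_2 \leq \norm{\Delta}_{\op}\norm{u}_2$. Combining this with $\norm{(I+\Delta)u}_2 \leq \epsilon$ and dividing by $1-c > 0$ yields $\norm{u}_2 \leq \epsilon/(1-c)$, i.e.\ $\norm{w}_{\Sigma} \leq \epsilon/(1-c)$, as claimed.

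There is no substantive obstacle here; the argument is a two-line estimate once the change of variables is in place. The only point requiring care is the bookkeeping of which norm is which—verifying that $\norm{w}_\Sigma$ equals $\norm{\Sigma^{1/2}w}_2$ and that the factorization above is exact—after which the bound follows immediately from the invertibility of $I + \Delta$ guaranteed by $c < 1$.
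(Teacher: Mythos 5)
Your proof is correct and is essentially the paper's argument: the substitution $u=\Sigma^{1/2}w$, the identity $\Sigma^{-1/2}V_N w=(I+\Delta)u$, and the reverse triangle inequality are exactly the paper's triangle-inequality decomposition $\norm{u}_2\leq\norm{\Delta u}_2+\norm{(I+\Delta)u}_2$ followed by rearrangement. No substantive difference.
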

\begin{proof}
We prove this result by standard analysis and by applying the assumptions.  The following relations hold.
\begin{align*}
    &\norm{w}_{\Sigma} = \norm{\Sigma^{1/2}w}_{2}\\
    &\leq \norm{\Delta\Sigma^{1/2}w}_{2} + \norm{\Sigma^{-1/2}V_N\Sigma^{-1/2}\Sigma^{1/2}w}_{2} \tag{Triangle inequality}\\
    &=\norm{\Delta\Sigma^{1/2}w}_{2} + \norm{\Sigma^{-1/2}V_N w}_{2}\\
    &\leq \norm{\Delta\Sigma^{1/2}w}_{2} + \epsilon \tag{By assumption}\\
    &\leq \norm{\Delta}_{\op}\norm{\Sigma^{1/2}w}_{2} + \epsilon \tag{Submultiplicative property of norm}\\
    &=c\norm{w}_{\Sigma} + \epsilon \tag{By assumption}.
\end{align*}
Rearranging yields the result.
\end{proof}

\begin{lemma}[Relative Spectral Norm Error,~\cite{hsu2012random}, Lemma 3]\label{lem: hsu relative spectral norm error}
Let $\Sigma_1,\Sigma_2$ be a PD matrices. Let $\Delta = \Sigma_1^{-1/2} \rbr{\Sigma_2 - \Sigma_1}\Sigma_1^{-1/2}.$ If $\norm{\Delta}<1$ then
\begin{align*}
    \norm{\Sigma_1^{1/2}\Sigma_2^{-1/2}}^2_{\op} = \norm{\Sigma_2^{-1/2}\Sigma_1^{1/2}}^2_{\op} = \norm{\Sigma_1^{1/2} \Sigma_2^{-1} \Sigma^{1/2}_1}_{\op} \leq \frac{1}{1-\norm{\Delta}_{\op}}.
\end{align*}
\end{lemma}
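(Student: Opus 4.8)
The plan is to reduce all three quantities to a single symmetric matrix and then read off the bound from its eigenvalues.

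First I would dispatch the two equalities using the elementary identity $\norm{M}_{\op}^2 = \norm{MM^\top}_{\op} = \norm{M^\top M}_{\op}$, valid for any matrix $M$, together with the fact that $\Sigma_1^{1/2}$ and $\Sigma_2^{-1/2}$ are symmetric (being functions of symmetric PD matrices). Taking $M = \Sigma_1^{1/2}\Sigma_2^{-1/2}$ gives $MM^\top = \Sigma_1^{1/2}\Sigma_2^{-1}\Sigma_1^{1/2}$, and taking $M = \Sigma_2^{-1/2}\Sigma_1^{1/2}$ gives $M^\top M = \Sigma_1^{1/2}\Sigma_2^{-1}\Sigma_1^{1/2}$ as well. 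Hence all three quantities coincide, and it remains only to bound $\norm{\Sigma_1^{1/2}\Sigma_2^{-1}\Sigma_1^{1/2}}_{\op}$.

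The key algebraic step is to rewrite $\Sigma_2$ through $\Delta$. By definition $\Sigma_1^{-1/2}(\Sigma_2-\Sigma_1)\Sigma_1^{-1/2} = \Delta$, so $\Sigma_1^{-1/2}\Sigma_2\Sigma_1^{-1/2} = I + \Delta$, i.e.\ $\Sigma_2 = \Sigma_1^{1/2}(I+\Delta)\Sigma_1^{1/2}$. Inverting and conjugating then yields the clean identity $\Sigma_1^{1/2}\Sigma_2^{-1}\Sigma_1^{1/2} = (I+\Delta)^{-1}$, reducing the target to $\norm{(I+\Delta)^{-1}}_{\op} \le (1-\norm{\Delta}_{\op})^{-1}$.

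Finally I would exploit that $\Delta$ is symmetric (a congruence of the symmetric matrix $\Sigma_2-\Sigma_1$), so its eigenvalues are real and lie in $[-\norm{\Delta}_{\op},\,\norm{\Delta}_{\op}]$. Under the hypothesis $\norm{\Delta}_{\op}<1$ this makes $I+\Delta$ PD with $\lambda_{\min}(I+\Delta) = 1 + \lambda_{\min}(\Delta) \ge 1 - \norm{\Delta}_{\op} > 0$, whence $\norm{(I+\Delta)^{-1}}_{\op} = 1/\lambda_{\min}(I+\Delta) \le 1/(1-\norm{\Delta}_{\op})$, as required. The argument is essentially routine; the only point demanding a little care is the symmetry of $\Delta$ and of the square roots, since this is precisely what legitimizes passing freely between $\norm{\cdot}_{\op}^2$ and the associated Gram matrices, and what converts the operator-norm bound into a statement about the smallest eigenvalue of $I+\Delta$.
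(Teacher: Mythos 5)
Your proof is correct, and it is more self-contained than the paper's. For the two equalities you argue exactly as the paper does, via the standard identities $\norm{M}_{\op}^2=\norm{MM^{\top}}_{\op}=\norm{M^{\top}M}_{\op}$ applied to $M=\Sigma_1^{1/2}\Sigma_2^{-1/2}$ and $M=\Sigma_2^{-1/2}\Sigma_1^{1/2}$, both of which have Gram matrix $\Sigma_1^{1/2}\Sigma_2^{-1}\Sigma_1^{1/2}$. The difference is in the inequality: the paper does not prove it at all, but simply cites Lemma 3 of \cite{hsu2012random}, whereas you supply the argument. Your route --- writing $\Sigma_2=\Sigma_1^{1/2}(I+\Delta)\Sigma_1^{1/2}$ so that $\Sigma_1^{1/2}\Sigma_2^{-1}\Sigma_1^{1/2}=(I+\Delta)^{-1}$, then using the symmetry of $\Delta$ to conclude $\lambda_{\min}(I+\Delta)\geq 1-\norm{\Delta}_{\op}>0$ and hence $\norm{(I+\Delta)^{-1}}_{\op}\leq (1-\norm{\Delta}_{\op})^{-1}$ --- is exactly the natural proof of that cited lemma, and every step checks out (the congruence identity, the symmetry of $\Delta$, and the passage from operator norm to smallest eigenvalue of the PD matrix $I+\Delta$). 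What your version buys is that the lemma no longer rests on an external reference; what the paper's version buys is brevity. Either is acceptable, and there is no gap in your argument.
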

\begin{proof}
The first equality follows from the fact that $\norm{A}_{\op}=\lambda_{\max}(A^TA) = \lambda_{\max}(A^TA)=\norm{A^{\top}}_{\op}$. The second equality follows from the fact that $\norm{A^TA} = \lambda_{\max}((A^TA)^2)=\lambda_{\max}(A^TA)^2= \norm{A}^2$. The third inequality is proved in~\cite{hsu2012random}, Lemma 3.
\end{proof}

\begin{proposition}[Ordinary Least Squares: In-Distribution Error] \label{prop: regularized least square general result}
Let $\delta\in (0,e^{-1})$. Let $\cbr{x_n,y_n}_{n=1}^N$ be a data set where $x_n\in \mathbb{R}^d, y_n\in \mathbb{R}$, and assume that $y_n = \inner{x_n}{\beta_\star} +\epsilon_n$ where $\epsilon_n$ is zero mean and $\sigma$ sub-gaussian. Let $\Sigma= \EE[xx^{\top}]$. Let $\widehat{\beta}$ be the solution of the ordinary least square objective
\begin{align*}
    &\widehat{\beta}= V_N^{\dagger} X_N^{\top} Y_N
\end{align*}
Then, assuming that $N\geq 9\gamma_{d,\delta}^2$ where $\gamma_{d,\delta}$ is defined in \pref{lem: covariance estimation sub gaussian}, with probability greater than $1-\delta$
\begin{align*}
    \norm{\beta_\star - \widehat{\beta} }_{\Sigma} \leq 5\sigma\sqrt{\frac{d\log\rbr{\frac{2}{\delta}}}{N}}.
\end{align*}
and $\norm{\Sigma^{-1/2}V_N\Sigma^{-1/2} - I}_{\op} \leq \frac{1}{3}.$
\end{proposition}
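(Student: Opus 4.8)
The plan is to reduce the claim to the two supporting lemmas already developed for the semiparametric analysis: the covariance concentration bound (\pref{lem: covariance estimation sub gaussian}) and the noise concentration bound (\pref{lem: useful concentration for OLS}). First I would invoke the covariance concentration bound: for $N \geq 9\gamma_{d,\delta}^2$ it yields, with probability at least $1-\delta$, that $\norm{\Sigma^{-1/2}V_N\Sigma^{-1/2}-I}_{\op}\leq \gamma_{d,\delta}/\sqrt{N}\leq 1/3$, which is exactly the second displayed claim. This same event certifies that $\lambda_{\min}(\Sigma^{-1/2}V_N\Sigma^{-1/2})\geq 2/3>0$, so $V_N$ has full rank, $V_N^\dagger=V_N^{-1}$, and $V_N^\dagger V_N\beta_\star=\beta_\star$.

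Next I would write out the OLS error. Substituting $y_n=\inner{x_n}{\beta_\star}+\epsilon_n$ into $\widehat{\beta}=V_N^\dagger X_N^\top Y_N$ and using $X_N^\top Y_N = V_N\beta_\star + \tfrac{1}{\sqrt{N}}X_N^\top\xi_N$ (where $\xi_N$ collects the $\epsilon_n$), the invertibility from the previous step gives the clean decomposition $\widehat{\beta}-\beta_\star = \tfrac{1}{\sqrt{N}}V_N^{-1}X_N^\top\xi_N$. Measuring in the $\Sigma$-norm and inserting $\Sigma^{1/2}\Sigma^{-1/2}$, I would split
\begin{align*}
\norm{\widehat{\beta}-\beta_\star}_\Sigma = \tfrac{1}{\sqrt{N}}\norm{\Sigma^{1/2}V_N^{-1}\Sigma^{1/2}\,\Sigma^{-1/2}X_N^\top\xi_N}_2 \leq \norm{\Sigma^{1/2}V_N^{-1}\Sigma^{1/2}}_{\op}\,\tfrac{1}{\sqrt{N}}\norm{\Sigma^{-1/2}X_N^\top\xi_N}_2.
\end{align*}
For the first factor I would use $\Sigma^{1/2}V_N^{-1}\Sigma^{1/2}=(\Sigma^{-1/2}V_N\Sigma^{-1/2})^{-1}$, whose operator norm equals $1/\lambda_{\min}(\Sigma^{-1/2}V_N\Sigma^{-1/2})\leq 3/2$ by the first step. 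That same step also yields $\norm{\Sigma^{-1/2}V_N^{1/2}}_{\op}^2=\lambda_{\max}(\Sigma^{-1/2}V_N\Sigma^{-1/2})\leq 4/3\leq 3/2$, which is precisely the hypothesis required to apply \pref{lem: useful concentration for OLS} (together with independence of $\epsilon_n$ and $x_n$ and positive-definiteness of $\Sigma$); that lemma bounds the second factor by $3\sigma\sqrt{d\log(1/\delta)/N}$ on an event of probability at least $1-\delta$. Multiplying the two factors gives $\norm{\widehat{\beta}-\beta_\star}_\Sigma\leq \tfrac{3}{2}\cdot 3\sigma\sqrt{d\log(1/\delta)/N}=\tfrac{9}{2}\sigma\sqrt{d\log(1/\delta)/N}$.

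Finally I would run both concentration lemmas at level $\delta/2$ and take a union bound, so the $\log(1/\delta)$ becomes $\log(2/\delta)$, the total failure probability is at most $\delta$, and since $\tfrac{9}{2}\leq 5$ the stated constant follows. The argument is essentially routine OLS analysis; the only point requiring care is the bookkeeping that ties the two lemmas together—verifying that the single covariance-concentration event simultaneously supplies (i) invertibility of $V_N$, (ii) the $3/2$ bound on $\norm{\Sigma^{1/2}V_N^{-1}\Sigma^{1/2}}_{\op}$, and (iii) the spectral hypothesis $\norm{\Sigma^{-1/2}V_N^{1/2}}_{\op}^2\leq 3/2$ demanded by the noise-concentration lemma—so that the two high-probability bounds can be chained without any further assumptions.
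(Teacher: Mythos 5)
Your proposal is correct and follows essentially the same route as the paper's proof: both rest on the normal equations, the covariance concentration bound to get $\norm{\Sigma^{-1/2}V_N\Sigma^{-1/2}-I}_{\op}\leq 1/3$, and the noise concentration lemma for $\norm{\Sigma^{-1/2}X_N^{\top}\xi_N}_2$, yielding the same $\tfrac{3}{2}\cdot 3\sigma$ constant. The only cosmetic difference is that you invert $V_N$ directly and bound $\norm{\Sigma^{1/2}V_N^{-1}\Sigma^{1/2}}_{\op}\leq 3/2$, whereas the paper reaches the same factor through its separate ``translating empirical to expected performance'' lemma.
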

\begin{proof}
Let $X_N\in \mathbb{R}^{N\times d}$ be a matrix with $\cbr{\frac{1}{\sqrt{N}} x_n}_{n=1}^N$ in its rows, $V_{N} = \frac{1}{N}\sum_{n=1}^N x_n x_n^{\top}= X_N^TX_N$ and $\xi_N\in \mathbb{R}^N$ be a matrix with $\cbr{\epsilon_n}_{n=1}^N$ in its rows.
Let $\widehat{\beta}$ be the OLS, i.e., it is the minimal norm solution that satisfies
\begin{align*}
    V_N (\beta_\star -\widehat{\beta}) =  \frac{1}{\sqrt{N}} X_N^{\top} \xi_N.
\end{align*}
Multiply both sides of the above equation by $\Sigma^{-1/2}$ (where $\Sigma$ is PD by assumption). We get that
\begin{align*}
    \Sigma^{-1/2} V_{N}  (\beta_\star -\widehat{\beta}) =\Sigma^{-1/2}X_N^{\top}\xi_N.
\end{align*}
Taking the norm of both sides and by the triangle inequality we get
\begin{align}
    \norm{ \Sigma^{-1/2} V_{N}(\beta_\star -\widehat{\beta})}_{2} = \frac{1}{\sqrt{N}} \norm{\Sigma^{-1/2}X_N^{\top}\xi_N}. \label{eq: OLS analysis relation 1}
\end{align}
We bound this term by applying \pref{lem: lower bound on the perturbed least square}. To apply this result we bound, with high probability, $\norm{\Sigma^{-1/2}V_N^{1/2}}^2_{\op}\leq \frac{3}{2}$ by the concentration of the empirical covariance matrix. It holds that
\begin{align}
    &\norm{\Sigma^{-1/2}V_N^{1/2}}^2_{\op} = \norm{\Sigma^{-1/2}V_N\Sigma^{-1/2}}_{\op} \nonumber\\
    &\leq 1+ \norm{\Sigma^{-1/2}V_N\Sigma^{-1/2} - I}_{\op} \nonumber\\
    & = 1 + \norm{\frac{1}{N}\sum_{n=1}^N (\Sigma^{-1/2}x_n)(\Sigma^{-1/2}x_n)^{\top} - I}_{\op} \nonumber \\
    &\leq 1+ \frac{\gamma_{d,\delta}}{\sqrt{N}},  \label{eq: first good event ols general concentration  1}
\end{align}
where the last relation holds with probability greater than $1-\delta$ by \pref{lem: covariance estimation sub gaussian} since $\Sigma^{-1/2}x_n\sim \Ncal(0,I_d)$. Thus, if $N\geq 9 \gamma_{d,\delta}^2$ it holds that $\norm{\Sigma^{-1/2}V_N^{1/2}}^2_{\op}\leq \frac{3}{2}$. Hence, conditioning on this event, \pref{lem: lower bound on the perturbed least square} is applicable. Thus, with probability greater than $1-2\delta$
\begin{align}
    \norm{\Sigma^{-1/2}V_N^{1/2}}^2_{\op} = \eqref{eq: OLS analysis relation 1} \leq   3\sigma\sqrt{\frac{d\log\rbr{\frac{1}{\delta}}}{N}}.\label{eq: first good event ols general concentration  2}
\end{align}
We now translate this bound to a bound w.r.t. $\norm{\beta_\star -\widehat{\beta}}_{\Sigma}.$ To do so, we apply \pref{lem: translating empirical to expected performance}. Observe that $(i)$ $\Sigma$ is PD by assumption, $(ii)$ conditioning on the good event $\norm{ \Sigma^{-1/2} V_{N}(\beta_\star -\widehat{\beta})}_{2} $ is bounded in~\eqref{eq: first good event ols general concentration  2}, and, $(iii)$ conditioning on the good event $\norm{\Sigma^{-1/2}V_N \Sigma^{-1/2}- I} \leq \frac{1}{3}$~\eqref{eq: first good event ols general concentration  1} and by the choice of $N$. Thus, by \pref{lem: translating empirical to expected performance} and setting $c=1/3$, we get that conditioning on the good event that holds with probability greater than $1-2\delta$, for $N\geq 9\gamma_{d,\delta}^2$,
\begin{align*}
    \norm{\beta_\star -\widehat{\beta}}_{\Sigma}\leq 5\sigma\sqrt{\frac{d\log\rbr{\frac{1}{\delta}}}{N}}.
\end{align*}
\end{proof}

\newpage

\section{Matrix Concentration Results}\label{supp: matrix concentration results survey}

\begin{lemma}[Covariance Estimation for Sub-Gaussian Distributions, Corollary 5.50,~\cite{vershynin2010introduction} and Remark 5.51] \label{lem: covariance estimation sub gaussian}
Let $\delta\in (0,e^{-1})$. Consider a sub-gaussian distribution in $\mathbb{R}^d$ with covariance  $\EE[xx^{\top}]=I$. Let $\Sigma_N = \frac{1}{N}\sum_{n=1}^N x_nx_n^{\top}$ be the empirical covariance matrix. Then, with probability greater then $1-\delta$ it holds that 
\begin{align*}
    \norm{\Sigma_N- I}_{\op} \leq \frac{\gamma_{d,\delta}}{\sqrt{N}}.
\end{align*}
for $$N \geq \gamma_{d,\delta} =  \sqrt{C d\log\rbr{\frac{1}{\delta}}}$$ where $C=C_K$ depends only on the sub-gaussian norm $K = \norm{x_i}_{\psi_2}$ and $C$ is an absolute constant if $x\sim \Ncal(0,I_D).$
\end{lemma}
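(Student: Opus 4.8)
The plan is to bound the operator norm deviation by a scalar empirical process on the sphere, discretize the sphere by a net, apply a sub-exponential tail bound pointwise, and finish with a union bound over the net. First I would reduce to the sphere: since $\Sigma_N - I$ is symmetric, $\norm{\Sigma_N - I}_{\op} = \sup_{u \in S^{d-1}} \abr{u^{\top}(\Sigma_N - I)u} = \sup_{u \in S^{d-1}} \abr{\tfrac{1}{N}\sum_{n=1}^N \rbr{\inner{x_n}{u}^2 - 1}}$, where the recentering by $1$ uses isotropy, $\EE[\inner{x}{u}^2] = u^{\top} I u = 1$ for every unit $u$.

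Next I would discretize. Fix a $1/4$-net $\mathcal{N}$ of $S^{d-1}$; a standard volumetric bound gives $\abr{\mathcal{N}} \le 9^{d}$, and for symmetric $M$ one has $\norm{M}_{\op} \le 2\max_{u\in\mathcal{N}}\abr{u^{\top} M u}$. It therefore suffices to control, for each fixed $u$ in the net, the mean-zero average $\tfrac{1}{N}\sum_n (\inner{x_n}{u}^2 - 1)$. For fixed $u$, the scalar $\inner{x_n}{u}$ is mean-zero sub-gaussian with $\psi_2$-norm at most $K = \norm{x}_{\psi_2}$, so $\inner{x_n}{u}^2 - 1$ is mean-zero sub-exponential with $\psi_1$-norm bounded by a constant multiple of $K^2$. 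Bernstein's inequality for i.i.d.\ sub-exponential summands then yields, for every $t \ge 0$, $\PP\rbr{\abr{\tfrac{1}{N}\sum_n (\inner{x_n}{u}^2-1)} \ge t} \le 2\exp\rbr{-c\,N \min\rbr{t^2/K^4,\ t/K^2}}$ for an absolute constant $c$.

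A union bound over the $\le 9^{d}$ net points requires the exponent to beat $d\log 9 + \log(2/\delta)$. Choosing the deviation level $t = \gamma_{d,\delta}/\sqrt{N}$ with $\gamma_{d,\delta} = \sqrt{C d \log(1/\delta)}$ (using $\delta \le e^{-1}$ so that $\log(1/\delta)\ge 1$, which lets the single term $d\log(1/\delta)$ absorb both $d\log 9$ and $\log(2/\delta)$), and using the net condition $N \ge \gamma_{d,\delta}$ to force $t \le 1$, makes $\min(t^2/K^4, t/K^2) = t^2/K^4$, i.e.\ the sub-gaussian branch of Bernstein. Absorbing $K$ and the factor $2$ from the net into $C = C_K$, this gives $\norm{\Sigma_N - I}_{\op} \le 2t = 2\gamma_{d,\delta}/\sqrt{N}$ with probability at least $1-\delta$, which is the claim; for $x \sim \Ncal(0, I_d)$ the $\psi_2$-norm is an absolute constant, so $C$ is absolute.

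\textbf{Main obstacle.} The delicate point is the two-regime character of Bernstein's inequality for quadratic forms. To obtain the $\sqrt{d\log(1/\delta)/N}$ rate I must land in the sub-gaussian branch (deviation $\sim t^2$) rather than the heavier sub-exponential branch (deviation $\sim t$); this is precisely why the sample-size hypothesis $N \ge \gamma_{d,\delta}$ enters, since it guarantees $t \le 1$ so that $\min(t^2, t) = t^2$. The remaining care is bookkeeping: tracking how the constant $C = C_K$ depends on the sub-gaussian norm $K$ through the $\psi_1$-norm of $\inner{x}{u}^2$, and checking that the net inflation $9^{d}$ is harmless after being folded into the $d$ inside $\gamma_{d,\delta}$.
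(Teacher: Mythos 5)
The paper does not actually prove this lemma; it is imported verbatim from Vershynin (Corollary 5.50 and Remark 5.51), and your argument is precisely the standard proof of that cited result: reduction of $\norm{\Sigma_N-I}_{\op}$ to a supremum over the sphere, a $1/4$-net of cardinality at most $9^d$, Bernstein's inequality for the mean-zero sub-exponential variables $\inner{x_n}{u}^2-1$, and a union bound over the net. In substance you have reconstructed the proof the authors are relying on, and the overall structure is sound.

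One step does not go through as written: you claim that the hypothesis $N\ge\gamma_{d,\delta}$ forces $t=\gamma_{d,\delta}/\sqrt{N}\le 1$. It does not --- it only gives $t\le\sqrt{\gamma_{d,\delta}}=(Cd\log(1/\delta))^{1/4}$, which exceeds $1$ whenever $d$ or $\log(1/\delta)$ is large. To land in the sub-gaussian branch of Bernstein, which as you correctly identify is the crux of obtaining the $\sqrt{d\log(1/\delta)/N}$ rate, you need $N\ge\gamma_{d,\delta}^2$ (up to constants). In the intermediate regime $\gamma_{d,\delta}\le N<\gamma_{d,\delta}^2$ the sub-exponential branch gives an exponent of order $\sqrt{N}\,\gamma_{d,\delta}$, which near the lower end of that range is only $\gamma_{d,\delta}^{3/2}\sim(d\log(1/\delta))^{3/4}$ and cannot beat the $d\log 9$ cost of the net for large $d$, so the union bound genuinely fails there. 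This defect is inherited from the lemma statement itself: Vershynin's corollary actually bounds $\norm{\Sigma_N-I}_{\op}$ by $\max(\eta,\eta^2)$ with $\eta=\gamma_{d,\delta}/\sqrt{N}$, and the $\eta^2$ term is exactly what covers the small-$N$ regime that the paper's restatement drops. The slip is harmless for the paper, which only ever invokes the lemma under the stronger hypothesis $N\ge 9\gamma_{d,\delta}^2$; your proof is correct once the sample-size hypothesis is read as $N\ge\gamma_{d,\delta}^2$.
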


\begin{lemma}\label{lem: bound on cross correlation}
Let $z_1\sim \Ncal(0,I_{d_1}),z_2 \sim \Ncal(0,I_{d_2})$ be independent random variables and $d=d_1+d_2$. Assume that $N\geq \gamma_{d,\delta/3}$ where $\gamma_{d,\delta}$ is defined in \pref{lem: covariance estimation sub gaussian}. Then, 
\begin{align*}
 \PP\rbr{ \norm{\frac{1}{N}\sum_{n=1}^N z_{1,n}z_{2,n}^{\top}}_{\op} \leq \frac{3\gamma_{d,\delta/3}}{\sqrt{N}}}\geq 1-\delta.
\end{align*}
\end{lemma}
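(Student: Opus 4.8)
The plan is to reduce this cross-covariance bound to the covariance-estimation guarantee of \pref{lem: covariance estimation sub gaussian} via a simple stacking argument. First I would form the concatenated vectors $z_n = (z_{1,n}^\top, z_{2,n}^\top)^\top \in \mathbb{R}^{d}$. Because $z_{1,n}$ and $z_{2,n}$ are independent standard Gaussians, $z_n \sim \Ncal(0, I_d)$, so \pref{lem: covariance estimation sub gaussian} applies verbatim to the empirical covariance $\widehat{\Sigma}_N = \frac{1}{N}\sum_{n} z_n z_n^{\top}$. The crucial observation is that the empirical cross term we wish to control is exactly the off-diagonal block of $\widehat{\Sigma}_N$: writing $\widehat{\Sigma}_N$ in $(d_1,d_2)$ block form, its top-right block is $\frac{1}{N}\sum_n z_{1,n} z_{2,n}^{\top}$, whereas the corresponding block of the population covariance $I_d$ is the zero matrix, since $\EE[z_{1}z_2^{\top}]=0$ by independence and zero mean.

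Second, I would pass from the full-matrix deviation to the block deviation using the submatrix operator-norm inequality. Letting $S_1=[I_{d_1},0]$ and $S_2=[0,I_{d_2}]$ be the two coordinate-selection matrices, we have $\frac{1}{N}\sum_n z_{1,n}z_{2,n}^{\top} = S_1 \widehat{\Sigma}_N S_2^{\top} = S_1(\widehat{\Sigma}_N - I_d)S_2^{\top}$, where the second equality uses $S_1 I_d S_2^{\top} = S_1 S_2^{\top} = 0$. Since $\norm{S_1}_{\op}=\norm{S_2}_{\op}=1$, submultiplicativity of the operator norm yields $\norm{\frac{1}{N}\sum_n z_{1,n}z_{2,n}^{\top}}_{\op} \le \norm{\widehat{\Sigma}_N - I_d}_{\op}$. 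Invoking \pref{lem: covariance estimation sub gaussian}, which is applicable because $N \ge \gamma_{d,\delta/3} \ge \gamma_{d,\delta}$ (the quantity $\gamma_{d,\cdot}$ is nonincreasing in its failure-probability argument), bounds the right-hand side by $\gamma_{d,\delta}/\sqrt{N}$ with probability at least $1-\delta$, which is no larger than the claimed $3\gamma_{d,\delta/3}/\sqrt{N}$.

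There is no serious obstacle in this argument: the whole proof is a reduction, and the only steps requiring care are verifying that a submatrix never has larger operator norm than the full matrix and that the population cross-block vanishes by independence. I would further remark that this route in fact delivers the sharper bound $\gamma_{d,\delta}/\sqrt{N}$ at failure probability $\delta$; the stated inequality, with its extra factor of $3$ and the replacement $\delta \mapsto \delta/3$, then follows a fortiori and leaves slack that is immaterial for all downstream uses (e.g., \pref{lem: elementwise convergence of second-moment based estimation} and the semiparametric least-squares analysis).
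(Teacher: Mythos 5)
Your proof is correct, but it takes a genuinely different route from the paper's. The paper proves this lemma via the Hermitian dilation of $E_n = z_{1,n}z_{2,n}^{\top}$: it writes the dilation $H_n$ as $H_{f,n} - H_{1,n} - H_{2,n}$, where $H_{f,n}$ is the outer product of the stacked vector and $H_{1,n}, H_{2,n}$ are its diagonal blocks, observes that each of the three pieces is (after centering) an empirical-covariance deviation, and then applies \pref{lem: covariance estimation sub gaussian} three times with a union bound --- which is exactly where the factor $3$ and the $\delta/3$ in the statement come from. You instead stack $z_n = (z_{1,n}^{\top}, z_{2,n}^{\top})^{\top}\sim\Ncal(0,I_d)$, identify the cross term as the off-diagonal block $S_1(\widehat{\Sigma}_N - I_d)S_2^{\top}$ of the centered empirical covariance, and use that a submatrix never has larger operator norm than the full matrix (via $\norm{S_1}_{\op}=\norm{S_2}_{\op}=1$ and submultiplicativity), so a single application of \pref{lem: covariance estimation sub gaussian} suffices. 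Both arguments rest on the same underlying concentration result, but yours is shorter, avoids the dilation machinery and the union bound, and delivers the sharper bound $\gamma_{d,\delta}/\sqrt{N}$ at failure probability $\delta$; the stated bound with the slack factor of $3$ then follows a fortiori, and, as you note, the slack is immaterial downstream. Your verification that $N\geq \gamma_{d,\delta/3}\geq \gamma_{d,\delta}$ makes the covariance lemma applicable is also correct.
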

\begin{proof}
Let $E_{n}=z_{1,n}z_{2,n}^{\top}$. Define the hermitian dilation of $E$ to be
\begin{align*}
    H_n 
    =
    \sbr{
    \begin{matrix}
     0 & E_n\\
     E_n^{\top}  &0
    \end{matrix}
    },
\end{align*}
and see that (e.g., \cite{tropp2012user}, section 2.6)
\begin{align}
    \norm{\frac{1}{N}\sum_n E_n}_{\op} = \lambda_{\max}\rbr{\frac{1}{N}\sum_{n} H_n}. \label{eq: dilation consequnce to bound sum of matrices}
\end{align}
Hence, instead of bounding the first we can bound the latter.

Let $H_{1,n}, H_{2,n}, H_{f,n}\in \mathbb{R}^{d \times d}$ be defined as follows,
\begin{align*}
    H_{1,n} = 
    \sbr{
    \begin{matrix}
     z_{1,n}z_{1,n}^{\top} & 0\\
    0  &0
    \end{matrix}
    }, 
    \quad
    H_{2,n} = 
    \sbr{
    \begin{matrix}
     0 & 0\\
    0  & z_{2,n}z_{2,n}^{\top}
    \end{matrix}
    },
    \quad 
    H_{f,n} = \sbr{
    \begin{matrix}
      z_{1,n}z_{1,n}^{\top} & z_{1,n}z_{2,n}^{\top}\\
    z_{2,n}z_{1,n}^{\top}  & z_{2,n}z_{2,n}^{\top}
    \end{matrix}
    }.
\end{align*}
With these definitions  we get that $$H_n = H_{f,n} -  H_{1,n} - H_{2,n}.$$ Furthermore, since $\EE[H_n]=0$ due to the independence of $z_1$ and $z_2$, and since they are assumed to be zero mean, we get that
\begin{align*}
   &\frac{1}{N}\sum_{n} H_n = \frac{1}{N}\sum_{n} H_{f,n} -  H_{1,n} - H_{2,n}\\
   &=\frac{1}{N}\sum_{n} H_{f,n} - \EE[H_{f,n}] -  \frac{1}{N}\sum_{n}H_{1,n}-\EE[H_{1,n}] - \frac{1}{N}\sum_n H_{2,n}-\EE[H_{1,n}]. \tag{Since $\EE[H_n]=\EE[ H_{f,n} -  H_{1,n} - H_{2,n}]=0$}
\end{align*}
Hence, we can bound $\lambda_{\max}\rbr{\frac{1}{N}\sum_n H_n}$ by the following sum
\begin{align}
     &\lambda_{\max}\rbr{\frac{1}{N}\sum_{n} H_n} \\
     &\leq  \lambda_{\max}\rbr{\frac{1}{N}\sum_{n} H_{f,n} - \EE[H_{f,n}]} +  \lambda_{\max}\rbr{\frac{1}{N}\sum_{n}H_{1,n}-\EE[H_{1,n}]} + \lambda_{\max}\rbr{\frac{1}{N}\sum_{n}H_{2,n}-\EE[H_{1,n}]} \label{eq: cross correlation relation 2}
\end{align}
since $\lambda_{\max}(A+B)\leq \lambda_{\max}(A) +  \lambda_{\max}(B)$\footnote{E.g., by using the variational form of maximal eigenvalue and since $\max_{x: \norm{x}_2=1} (x^TA x + x^TBx)\leq \max_{x: \norm{x}_2=1} x^TAx + \max_{x: \norm{x}_2=1} x^TBx$}.

Observe that each one of the three summands is the deviation of the empirical covariance from its average. Applying \pref{lem: covariance estimation sub gaussian} and by applying the union bound we get that with probability greater than $1-3\delta$, and assuming that $N\geq \frac{\gamma_{d,\delta}}{\sqrt{N}}$ 
\begin{align*}
    &\eqref{eq: cross correlation relation 2}\leq \rbr{\norm{\Sigma}\gamma_{d,\delta} +\norm{\Sigma_1}\gamma_{d_1,\delta} + \norm{\Sigma_2}\gamma_{d_2,\delta}}/\sqrt{N} \leq 3\norm{\Sigma}\gamma_{d,\delta}/\sqrt{N} = 3\gamma_{d,\delta}/\sqrt{N} ,
\end{align*}
where the last relation holds since $\Sigma_1,\Sigma_2\leq \Sigma=I$ and since $\gamma_{d,\delta}$ is increasing in $d.$ Finally, setting $\delta\gets \delta/3$ yields the result.
\end{proof}

\newpage
\section{Experiment Details}
\label{app: appendix-experiments}

In this section, we complete the details for the experimental setup outlined in \pref{sec: experiments}.

\paragraph{Synthetic PC-LQRs.} We constructed a family of PC-LQR problems, parameterized by $(s_c, s_e, d_u, d)$. The diagonal blocks $A_1 \in \RR^{s_c \times s_c}$, $A_2 \in \RR^{s_e \times s_e}$, $A_3 \in \RR^{d-s_c-s_e, d-s_c-s_e}$ were generated by sampling each entry from $\Ncal(0, 1)$, dividing by the spectral radius (i.e. the largest modulus of complex eigenvalues), then multiplying by the desired spectral radius. We set $\rho(A_1) = 1$, to make the controllable part of the system marginally stable, and set $\rho(A_2) = \rho(A_3) = 0.9$. The matrices $A_{12}, A_{32}$, and $B_1$ were obtained by sampling each entry from $\Ncal(0, 1)$. Finally, for the LQR cost matrices, we selected $Q = I_{1+}$ and $R = I_{d_u}$.

\paragraph{System identification.} Two system identification methods for estimating $A$ were compared: ordinary least squares regression from $x_1$ onto $x_0$ (with the least-Frobenius norm solution), and the soft-thresholded semiparametric least squares estimator from \pref{alg: control pc lqr samples}, with a choice of $\epsilon = 0.1$.\footnote{With these synthetic systems, \pref{alg: control pc lqr samples} performed similarly with the thresholded OLS estimator.} Fixing a sample size $N$, we sampled all $x_0 \sim \Ncal(0, I)$ i.i.d., and $x_1 = Ax_0 + \eta_0$, where $\eta_0 \sim \Ncal(0, I)$.

\paragraph{Certainty-equivalent control.} We plugged these $(\hat A, B)$ into SciPy's discrete algebraic Riccati equation solver, which outputs the fixed-solution solution $P_\star$ under the nominal dynamics; then, the LQR cost of the derived controller on the true system was measured; if this was finite and within a factor of $1.1$ of the optimal cost on the true dynamics, we called this trial (indexed by an independent sample) a success: the learned controller stabilized this marginally stable system.

We varied the sample size $N$ between $100$ and $1000$ in increments of 20, and varied $d \in \{20, 50, 100, 150\}$, fixing $s_c = s_e = 5, d_u = 1$. \pref{fig:synthetic-plot} shows the fraction of successful trials over 100 repetitions; error bars show normal approximation-derived standard deviations. All experiments took around 2 hours on a single 2.3 GHz Intel i7 CPU machine.


\end{document}